\newtheorem{theorem}{Theorem}[section]
\newtheorem{definition}[theorem]{Definition}
\newtheorem{example}[theorem]{Example}
\newtheorem{lemma}[theorem]{Lemma}
\newtheorem{notation}[theorem]{Notation}
\newtheorem{proposition}[theorem]{Proposition}
\newtheorem{remark}[theorem]{Remark}
\newcommand{\cf}{\mathcal{F}}
\newcommand{\eat}[1]{}
\newcommand{\grass}[2]{Gr_{{#1},{#2}}}
\newcommand{\ko}{\: , \;}
\newcommand{\per}{\mathrm{per}}
\newcommand{\Si}{\Sigma}
\newcommand{\ol}[1]{\overline{#1}}
\newcommand{\ul}[1]{\underline{#1}}
\newcommand{\ten}{\otimes}
\renewcommand{\hat}[1]{\widehat{#1}}
\newcommand{\iso}{\xrightarrow{_\sim}}
\newcommand{\Hom}{\mathrm{Hom}}
\newcommand{\RHom}{\mathrm{RHom}}
\newcommand{\Ext}{\mathrm{Ext}}
\newcommand{\add}{\mathrm{add}}
\newcommand{\cok}{\operatorname{cok}}
\newcommand{\sn}{\operatorname{sgn}}
\newcommand{\supp}{\mathrm{supp}\,}
\newcommand{\llbracket}{[\![}
\newcommand{\rrbracket}{]\!]}
\newcommand{\cc}{{\mathcal C}}
\newcommand{\cd}{{\mathcal D}}
\newcommand{\cg}{{\mathcal G}}
\newcommand{\ch}{{\mathcal H}}
\newcommand{\cj}{{\mathcal J}}
\newcommand{\cp}{{\mathcal P}}
\newcommand{\cs}{{\mathcal S}}
\newcommand{\ct}{{\mathcal T}}
\newcommand{\cy}{{\mathcal Y}}
\newcommand{\eps}{\varepsilon}
\newcommand{\Ga}{\Gamma}
\newcommand{\Z}{\mathbb{Z}}
\newcommand{\C}{\mathbb{C}}
\newcommand{\Mod}{\mathrm{Mod}\,}
\renewcommand{\mod}{\mathrm{mod}\,}
\newcommand{\proj}{\mathrm{proj}\,}
\newcommand{\cm}{\mathrm{cm}}
\newcommand{\ind}{\mathrm{ind}}
\newcommand{\sub}{\mathrm{sub}}
\renewcommand{\tilde}[1]{\widetilde{#1}}
\DeclareRobustCommand*\cal{\@fontswitch\relax\mathcal}
\newenvironment{magenta}
{\color{magenta}}
{}
\renewenvironment{magenta}
{}{}  
\begin{document} 


\title[$g$-vectors and $DT$-$F$-polynomials]{$g$-vectors and $DT$-$F$-polynomials for Grassmannians\\
via additive categorification}
\author{Sarjick Bakshi}
\address{Department of Mathematics and Statistics \\
Indian Institute of Technology Kanpur\\
208 016\\
India.
}
\email{sarjick91@gmail.com}
\author{Bernhard Keller}
\address{Universit\'e Paris Cit\'e and Sorbonne Université, CNRS,
IMJ-PRG, F-75013 Paris, France}

\email{bernhard.keller@imj-prg.fr}
\urladdr{https://webusers.imj-prg.fr/~bernhard.keller/}

\keywords{Cluster algebra, Grassmannian, additive categorification, Donaldson--Thomas invariants} 
\subjclass[2010]{13F60}

\begin{abstract} We review $\Hom$-infinite Frobenius categorification of cluster algebras with coefficients
and use it to give two applications of Jensen--King--Su's Frobenius categorification  of the Grassmannian: 
1) we determine the $g$-vectors of the Pl\"ucker coordinates
with respect to the triangular initial seed and 2) we express the $F$-polynomials associated
with the Donaldson--Thomas transformation in terms of $3$-dimensional Young diagrams
thus providing a new proof for a theorem of Daping Weng. 
\end{abstract}

\maketitle

\tableofcontents

\newpage
\section{Introduction}

\begin{magenta}
Cluster algebras arising from Grassmannian varieties have served both as a rich source of combinatorial models 
and as a testing ground for categorification techniques following foundational work of Postnikov \cite{Postnikov06}, Scott  \cite{Scott06} and 
Geiss--Leclerc--Schr\"oer \cite{GeissLeclercSchroeer08}.
Jensen--King--Su \cite{JensenKingSu16} extended the scope of Geiss--Leclerc--Schr\"oer's framework by providing a
Frobenius categorification of the entire Grassmannian coordinate ring, thereby enabling a
categorical interpretation of all the Pl\"{u}cker coordinates and the corresponding cluster structure.  
In parallel, $g$-vectors and their extended versions have emerged in the general theory of
cluster algebras as fundamental invariants governing
positivity, canonical bases, and mutation phenomena.  The aim of this work is to bring the general
theory to bear on the special case of the Grassmannian cluster algebra by computing the extended \mbox{$g$-vectors} 
of all Pl\"{u}cker coordinates as categorical indices in the Jensen--King--Su model. Furthermore, we
illustrate the general theory of Donaldson--Thomas transformations by giving a new, representation-theoretic approach
to the computation of the Donaldson--Thomas $F$-polynomials associated with the Grassmannian
cluster algebra.
\end{magenta}

\subsection{The Grassmannian cluster algebra}
Let $\grass{k}{n}$ denote the \textit{Grassmannian} variety of $k$-subspaces in complex $n$-space and $\mathbb{C}[\grass{k}{n}]$ \ the homogeneous coordinate ring of the cone of the Grassmannian. Since the invention of cluster algebras by Fomin and Zelevinsky \cite{fomin2002cluster} and Scott's work \cite{Scott06}, the algebra $\mathbb{C}[\grass{k}{n}]$ played an important role as a non trivial example of a cluster algebra with coefficients. It is known as the Grassmannian cluster algebra.
Let us recall some milestones of its study: 
In \cite{fomin2003cluster}, Fomin and Zelevinsky classified the cluster-finite cluster algebras. In the same paper, they also show that the homogeneous coordinate ring of the Grassmannian $\grass{2}{n}$ is a cluster algebra with coefficients (see \cite[Proposition 12.6]{fomin2003cluster}) whose exchange relations come from the Pl\"{u}cker relations. In \cite{Scott06}, Scott generalised this result to all Grassmannians by showing that the homogeneous coordinate ring of $\grass{k}{n}$ admits a cluster algebra structure using the generalisation, due to Postnikov \cite{Postnikov06},  of double wiring arrangements called alternating wiring arrangements. 

\subsection{Frobenius categorification}
Geiss--Leclerc--Schr\"{o}er, in their fundamental paper \cite{GeissLeclercSchroeer08}, categorified the cluster algebra structure on an open affine cell of the Grassmannian using a subcategory $\sub\, Q_k$ of the module category of the preprojective algebra of type $A_{n-1}$ (more generally, they categorified open affine cells of arbitrary partial flag
varieties).  Jensen--King--Su \cite{JensenKingSu16} extended Geiss--Leclerc--Schr\"oer's
categorification to the homogeneous coordinate algebra
of the {\em whole} Grassmannian using equivariant Cohen--Macaulay modules\begin{magenta}.\end{magenta}
Let $R$ denote the complete local coordinate ring of the singularity $x^k = y^{n-k}$. The cyclic group $G$ of 
$n$th roots of unity naturally acts on $R$ by rescaling the coordinates. Jensen--King--Su \cite{JensenKingSu16} 
studied the homogeneous coordinate algebra $\mathbb{C}[\grass{k}{n}]$ endowed with
Scott's cluster structure using the category $\cm^G(R)$ of $G$-equivariant Cohen--Macaulay $R$-modules. 
They showed that the categories $\cm^G(R)$ and $\sub\, Q_k$ are stably equivalent. This allowed
them to transfer Geiss--Leclerc--Schr\"oer's results from $\sub\, Q_k$ to $\cm^G(R)$. In particular,
they deduced that the category $\cm^G(R)$ is stably $2$-Calabi--Yau and admits a 
cluster tilting object (also known as maximal one-orthogonal object, for the definition see section 
\ref{s.Introduction 3}). Building on  \cite{GeissLeclercSchroeer08}, they also obtained a
cluster character $CC: \cm^G(R) \to \mathbb{C}[\grass{k}{n}]$ which induces a bijection
from the set of indecomposable reachable rigid objects (up to isomorphism) of $\cm^G(R)$ to
the set of cluster variables in such a way that clusters correspond bijectively to reachable basic cluster-tilting objects. 

By Scott's work, each Pl\"ucker coordinate is a cluster variable and
Jensen--King--Su showed that the map $CC$ induces a bijection between 
(isomorphism classes of) rank one modules in $\cm^G(R)$ (i.e.~$G$-equivariant Cohen--Macaulay
modules whose underlying $R$-modules are free of rank $n$) and Pl\"ucker coordinates.
Many of their results were subsequently extended to quantum
cluster algebras \cite{JensenKingSu22} and generalized to positroid 
varieties, cf. \cite{CanakciKingPressland24}.

\subsection{$g$-vectors of Pl\"ucker coordinates and their categorification}
The $g$-vectors were introduced by Fomin--Zelevinsky in \cite{fomin2007cluster} motivated by 
Fock--Goncha\-rov's geometric approach to cluster algebras in \cite{FockGoncharov09}. The 
$g$-vectors play a crucial role in the proof of Fomin--Zelevinsky's positivity conjecture in the most general, 
skew-symmetrizable case by Gross--Hacking--Keel--Kontsevich \cite{gross2018canonical}. Assuming the existence of a \textit{reddening sequence} \cite{keller2011cluster, KellerDemonet20}, for cluster algebras with invertible coefficients,
they obtain a canonical basis of theta functions parametrised by all points of the $g$-vector lattice. 

 In \cite{dehy2008combinatorics}, Dehy and Keller introduced the notion of \textit{index} of an object of a Hom-finite 
 $2$-Calabi--Yau (CY)  triangulated category with respect to a cluster-tilting object. They showed that the $g$-vectors of the cluster monomials can be interpreted categorically as the indices of the reachable rigid objects. Their results have been generalised in \cite{FuKeller10}, where Fu and Keller studied the categorification of cluster algebras with coefficients and showed that different cluster monomials have different $g$-vectors, and that the $g$-vectors of any given cluster form a basis of the ambient lattice. In the full rank case, they deduced the linear independence of the cluster monomials (it was shown later in \cite{CerulliKellerLabardiniPlamondon13} that the cluster monomials are always linearly independent). 
 In this paper, we extend the definition of indices to the objects of a {\em stably $2$-Calabi--Yau Frobenius category} endowed with a cluster tilting object. Such categories were studied in depth in \cite{buan2009cluster}. 
 This allows us to interpret the  {\em extended $g$-vectors} of \cite{fomin2007cluster} as indices. 
 As an application, we  determine the extended $g$-vectors of the Pl\"ucker coordinates using Jensen--King--Su's
category $\cm^G(R)$ of $G$-equivariant Cohen--Macaulay modules. Concretely, in order to determine the extended 
$g$-vectors of the Pl\"ucker coordinates with respect to a given seed, it is enough to determine the indices of the rank one modules with respect to the corresponding cluster-tilting object. The seed we use is the {\em Le-diagram seed} 
for the Grassmannian, cf.~for example section 10.3.1 in  \cite{GeissLeclercSchroeer08} or section 3.1 in \cite{Fraser16}. Because of the shape of the corresponding quiver,  we also call it the {\em triangle seed} and denote it by $\boxslash_{k,n}$. This seed is different
from but mutation equivalent to Scott's \cite{Scott06} seed. Let us describe the result we obtain in more detail\begin{magenta}.\end{magenta}
 Let $I(k,n)$ denote the set of sequences 
 \[
 1 \leq a_1 < \ldots < a_k \leq n
 \]
 of integers. We know that the homogeneous coordinate algebra $\mathbb{C}[\grass{k}{n}]$ is generated 
as a $\mathbb{C}$-algebra by the Pl\"{u}cker coordinates $p_{w}$ with $w \in I(k,n)$. 
Let $w = (a_1,a_2,\ldots,a_k)$ and let ${\cal{Y}}_w$ be the Young diagram whose $i$th row from top
has length $a_{k-i+1}-(k-i+1)$. The following notions are implicit in  Lakshmibai--Weyman's \cite{lakshmibai1990multiplicities} combinatorial description of the singular locus of Schubert varieties:
We say that a box $b$ of ${\cal{Y}}_w$ is a {\em peak} if ${\cal{Y}}_w$ contains no boxes to the East and no boxes to the South of $b$. A box $b$ will be called a {\em valley} if there is a box to the South and a box to the East of $b$, 
but no box in the Southeast of $b$. 

\begin{theorem} (Theorem \ref{thm: g-vectors})
Let $I \in I(k,n)$. If $\mathcal{Y}_I$ is non empty, let $P$ denote the set of peaks and $V$ 
denote the set of valleys appearing in ${\cal Y}_I$. Then we have
\begin {itemize}
\item
If $I = (1,2,\ldots, k)$, then the $g$-vector of the Pl\"ucker coordinate $p_I$
with respect to the triangular seed of Theorem~\ref{ini.c}  is the basis vector $e_\emptyset$ of $\Z^m$ 
associated with the exceptional frozen vertex of $\boxslash_{k,n}$.
\item
If $I \neq (1,2,\ldots, k)$, then the $g$-vector of the Pl\"ucker coordinate $p_I$
with respect to the triangular seed of Theorem~\ref{ini.c} is given by
\[
\sum_{p \in P} e_p - \sum_{v \in V} e_v,
\]
where $e_p$ denotes the standard basis vector of $\Z^m$ associated with the vertex $p$ of
the quiver $\boxslash_{k,n}$. 
\end{itemize}
\end{theorem}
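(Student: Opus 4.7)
The plan is to invoke the extension to stably $2$-Calabi--Yau Frobenius categories of Dehy--Keller's identification of $g$-vectors with indices, established earlier in the paper. Let $T_\boxslash = \bigoplus_J T_J$ denote the basic cluster-tilting object in $\cm^G(R)$ whose Gabriel quiver is $\boxslash_{k,n}$. By Jensen--King--Su, each Pl\"ucker coordinate $p_I$ equals the cluster character $CC(M_I)$ of a rank one module $M_I$, so the $g$-vector of $p_I$ is the class of the index $\ind_{T_\boxslash}(M_I) \in K_0(\add T_\boxslash) \simeq \Z^m$. This index is read off any conflation
\[
0 \longrightarrow T_1 \longrightarrow T_0 \longrightarrow M_I \longrightarrow 0
\]
with $T_0, T_1 \in \add T_\boxslash$ as $[T_0] - [T_1]$. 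The proof therefore reduces to constructing such a conflation in which $T_0$ is the sum of the summands indexed by the peaks of $\mathcal{Y}_I$ and $T_1$ is the sum of those indexed by the valleys.

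When $I = (1, \ldots, k)$, the diagram $\mathcal{Y}_I$ is empty and one checks directly from the Jensen--King--Su dictionary that $M_I$ is precisely the summand of $T_\boxslash$ sitting at the exceptional frozen vertex $\emptyset$, so its index is $e_\emptyset$ by definition. For the general case, I would exploit the description of rank one modules as $G$-equivariant fractional $R$-lattices: under this identification, the summands $T_J$ of $T_\boxslash$ are exactly the rank one modules $M_J$ as $J$ ranges over the Young diagrams labelling the vertices of $\boxslash_{k,n}$. For each peak $p$ of $\mathcal{Y}_I$ there is a distinguished such diagram $J(p)$ whose south-east corner is $p$, with a canonical inclusion of lattices $M_{J(p)} \hookrightarrow M_I$; for each valley $v$ there is similarly a diagram $J(v)$ whose south-east corner sits just north-east of $v$, with inclusions $M_{J(v)} \hookrightarrow M_{J(p)}$ for the two peaks $p$ adjacent to $v$. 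I would then verify that the resulting two-term complex
\[
0 \longrightarrow \bigoplus_{v \in V} M_{J(v)} \longrightarrow \bigoplus_{p \in P} M_{J(p)} \longrightarrow M_I \longrightarrow 0,
\]
with the second map given componentwise by the valley-to-peak inclusions and the appropriate signs, is exact in $\cm^G(R)$.

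The main obstacle is this last exactness statement. It reduces to an inclusion-exclusion for $G$-equivariant fractional $R$-lattices along the staircase boundary of $\mathcal{Y}_I$: each peak contributes a generator of $M_I$, while each valley records precisely the overlap between the two lattices attached to its adjacent peaks. Once this is established and one has matched the diagrams $J(p)$ and $J(v)$ with the peak and valley vertices of the quiver $\boxslash_{k,n}$, the claimed formula $\ind_{T_\boxslash}(M_I) = \sum_{p \in P} e_p - \sum_{v \in V} e_v$ is immediate from the definition of the index.
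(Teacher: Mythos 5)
Your outline takes a genuinely different route from the paper's proof. The paper passes to the stable category $\ul{\cm}(B)$, invokes the triangle equivalence with the cluster category of $B_1\otimes B_2$ (a tensor product of linearly oriented type-$A$ path algebras), and computes a minimal projective resolution of the corresponding $B_1\otimes B_2$-module $M_I$ by calculating $\Ext^l_{B_1\otimes B_2}(M_I, S_i\otimes S_j)$ from injective resolutions of the simples; this yields the non-frozen part of the $g$-vector, and the frozen contribution is then identified separately via Remark~\ref{rk:no projective summands}. You propose instead to construct the conflation
\[
0 \longrightarrow \bigoplus_{v\in V} T_v \longrightarrow \bigoplus_{p\in P} T_p \longrightarrow L_I \longrightarrow 0
\]
directly inside $\cm^G(R)$ from the lattice model of rank-one modules, treating frozen and non-frozen summands uniformly. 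This is the right target (the paper derives exactly this conflation at the end of its own argument), and the paper's Related work section explicitly notes that such a direct construction, in the spirit of Baur--Bogdanic, is carried out in Lemma~4.9 of its first arXiv version. What your route buys is concreteness and uniformity; what it costs is that the entire weight of the proof now rests on the inclusion-exclusion exactness argument for equivariant lattices along the north-east staircase of $\mathcal{Y}_I$, which you flag as the main obstacle but do not carry out -- and that \emph{is} the real content of the lemma, so it cannot be left as a black box. The paper's detour through $\cd^b(\mod B_1\otimes B_2)$ is precisely what replaces that direct lattice computation by a short finite-dimensional $\Ext$ calculation using a $2$-term injective bicomplex. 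One detail to re-examine when filling the gap: the summand $T_v$ attached to a valley $v$ of $\mathcal{Y}_I$ is the module whose rectangular Young diagram has its unique peak \emph{at} $v$, so you should check your statement that the SE corner of $J(v)$ sits north-east of $v$ against the paper's indexing conventions before running the exactness argument.
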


\subsection{Donaldson--Thomas invariants and their $F$-polynomials}
The link between cluster transformations and {\em Donaldson--Thomas ($DT$)} theory was discovered by 
Kontsevich--Soibelman \cite{kontsevichsoibelman2008}. They related the theory of $3$-Calabi-Yau categories with distinguished set of spherical generators with the theory of quivers with potential and constructed a non-commutative 
refined $DT$-invariant for such categories. Nagao in \cite{Nagao13}  was the first to give a complete 
dictionary between cluster combinatorics and data appearing in $DT$-theory.  He used it to provide new proofs for 
many of Fomin--Zelevinsky's conjectures, in particular the sign-coherence of $c$-vectors.
Keller \cite{keller2011cluster} gave a combinatorial construction of Kontsevich--Soibelman's 
refined $DT$-invariant in terms of reddening sequences. A quiver may admit multiple reddening sequences. 
Each of these gives an expression of the refined $DT$-invariant as a product of quantum dilogarithms. 
By comparing these expressions, one can obtain 
many interesting  quantum dilogarithm identities, cf.~for example 
\cite{keller2011cluster,Nakanishi2011,Nakanishi11,KashaevNakanishi11,InoueIyamaKellerKunibaNakanishi13,InoueIyamaKellerKunibaNakanishi13a,Nakanishi15, GekhtmanNakanishiRupel17,Nakanishi18,Nakanishi21,Nakanishi24}. 

The {\em $DT$-transformation} of a cluster algebra is induced by a twist of the adjoint action of the corresponding
refined $DT$-invariant (when defined). \begin{magenta} The $DT$-transformation contains almost the same amount of 
information as the refined $DT$-invariant.\end{magenta}
The study of the $DT$-transformation for {\em cluster varieties} was lead by Goncharov--Shen in \cite{GoncharovShen2018},
where they study cluster $DT$-transformations on moduli spaces of $G$-local systems on surfaces using tropical points of 
cluster varieties. $DT$-transformations were studied for other important classes of varieties which admit cluster structures like
Grassmannians \cite{Weng21}, double Bott--Samelson cells \cite{Weng20, ShenWeng21} and braid varieties \cite{casals2024}. 

Much like $g$-vectors, {\em $F$-polynomials}, which are  certain integer polynomials, play an important role in the study of cluster algebras. They were introduced by Fomin--Zelevinsky in \cite{fomin2007cluster}. Inspired by the Caldero--Chapoton formula \cite{CalderoChapoton06}
Derksen--Weyman--Zelevinsky \cite{DerksenWeymanZelevinsky10} gave a representation-theoretic interpretation of these polynomials using 
representations of quivers with potential (which was instrumental in their proof of many of the conjectures made by Fomin--Zelevinsky in \cite{fomin2007cluster}). Whenever a quiver admits a reddening sequence, its (non-refined)
$DT$-invariant can be captured by certain $F$-polynomials, namely those associated with the cluster variables whose $g$-vectors 
are the opposite standard basis vectors (up to a permutation). These are precisely the cluster variables in the final seed obtained after 
a reddening sequence. We call these the {\em $DTF$-polynomials}, which is short for the $DT$-$F$-polynomials of
the title of this paper. It is immediate from Nagao's results in \cite{Nagao13} that whenever a quiver with non degenerate potential $(Q,W)$ admits a reddening sequence, the $i$th $F$-polynomial in the sequence $DTF_Q$ is given by $F_{I_i}$ where $I_i$ is the (right) module over the Jacobian algebra of $(Q,W)$ constructed as the injective hull of the simple module concentrated at the vertex $i$ of $Q$. 

Weng  \cite{Weng23} studied $DTF$-polynomials for several important classes of cluster algebras, notably
the coordinate algebras of varieties of triples of flags. He showed that the $DTF$-polynomials can be
computed as generating functions for ideals inside labeled posets. In the case of varieties of triples
of flags, each $DTF$-polynomial is obtained from the poset of $3D$ Young diagrams contained in a
rectangular cuboid. 

We give a simpler proof of Weng's result using a completely different approach\begin{magenta}.\end{magenta}
We study $DTF$-polynomials for the rectangular quiver $Q$ of the Grassmannian $Gr(k,n)$, 
cf.~Example~\ref{example Gr(4,9)}.
Let $L_m$ be the linearly  ordered set $1 < 2< \cdots < m$. 

\begin{theorem}[=Theorem~\ref{thm:Weng}, Weng \cite{Weng23}]
For a vertex $i=(p,q)$ of $Q$, the corresponding
$DTF$-polynomial is
\[
F_{I_i}(y)=\sum_K \prod_{(p',q',r')\in K} y_{p+p'-r', q+q'-r'}
\]
where $K$ ranges over the right ideals of the poset 
$L_r \times L_s \times L_t$ with $r=(n-k-1)-p$, $s=(k-1)-q$ and
$t=1 + \min(p-1, q-1)$.
\end{theorem}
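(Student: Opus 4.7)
The plan is to combine the representation-theoretic interpretation of $DTF$-polynomials recalled in the introduction with Jensen--King--Su's categorification. Since the rectangular quiver $Q$ of $Gr(k,n)$ admits a reddening sequence (for example the one realising the $DT$-transformation), Nagao's theorem identifies the $DTF$-polynomial at the vertex $i=(p,q)$ with the $F$-polynomial $F_{I_{(p,q)}}$ of the injective hull of the simple $S_{(p,q)}$ over the Jacobian algebra $J(Q,W)$ attached to the natural superpotential $W$. By the Derksen--Weyman--Zelevinsky formula,
\[
F_{I_{(p,q)}}(y)=\sum_{e} \chi\bigl(\Gr_e(I_{(p,q)})\bigr)\prod_j y_j^{e_j}.
\]

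My first task is to describe $I_{(p,q)}$ concretely. Via the JKS equivalence, $J(Q,W)$ is realised as the endomorphism algebra of the rectangular cluster-tilting object $T$ in $\cm^G(R)$, and $I_{(p,q)}$ corresponds, under $\Hom_{\cm^G(R)}(T,-)$, to an explicit Cohen--Macaulay module. I expect the composition factors of $I_{(p,q)}$ to be indexed by the lattice points of the 3D box $L_r \times L_s \times L_t$, with the point $(p',q',r')$ contributing a copy of the simple at the vertex $(p+p'-r',q+q'-r')$. The shift by $-r'$ in each coordinate reflects the depth layer in the box dictated by the cyclic relations coming from $\partial W$, and the depth bound $t=1+\min(p-1,q-1)$ is forced by the presence of the frozen boundary vertices.

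The core combinatorial step is then the lemma that the submodules of $I_{(p,q)}$ are in bijection with the right ideals of the poset $L_r \times L_s \times L_t$, and that each such submodule is an isolated point in the appropriate quiver Grassmannian contributing $1$ to the Euler characteristic. Granting this, the submodule $N_K$ attached to an ideal $K$ has dimension vector $\sum_{(p',q',r')\in K} e_{(p+p'-r',q+q'-r')}$ and contributes the monomial $\prod_{(p',q',r')\in K} y_{p+p'-r',q+q'-r'}$ to $F_{I_{(p,q)}}$. Summing over all ideals $K$ then reproduces exactly the formula of the theorem.

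The main obstacle is the identification of the submodule lattice of $I_{(p,q)}$ with right ideals of $L_r \times L_s \times L_t$. I would exploit the fact that the rectangular Jacobian algebra has a transparent structure: its indecomposable injectives can be drawn as staircase diagrams whose interior is precisely a 3D box once the cyclic relations are used to identify paths of equal weight. Once this structural fact is in place, the translation into a generating function over 3D Young diagrams contained in the $r\times s\times t$ box becomes a bookkeeping exercise, and the fact that each submodule contributes $1$ to $\chi$ follows from the 2-Calabi--Yau property of $\cm^G(R)$ together with standard cell-decomposition arguments.
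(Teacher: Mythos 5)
Your starting point is correct and matches the paper: both use Nagao's theorem to identify the $DTF$-polynomial at $i=(p,q)$ with the $F$-polynomial of the indecomposable injective $I_i$ over the Jacobian algebra, and both then try to compute $F_{I_i}$ by the Derksen--Weyman--Zelevinsky generating-function formula. But there is a genuine gap in how you pass from the quiver Grassmannians $Gr_{\mathbf{e}}(I_i)$ to the count of $3D$ Young diagrams, and the idea you are missing is exactly the one the paper supplies.

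You assert that the \emph{submodules} of $I_i$ are in bijection with right ideals of $L_r \times L_s \times L_t$, that each such submodule is an isolated point of the quiver Grassmannian, and that each contributes $1$ to the Euler characteristic, ascribing the last step to the $2$-Calabi--Yau property of $\cm^G(R)$ and ``cell decompositions.'' This is not correct as stated. The full quiver Grassmannian $Gr_{\mathbf{e}}(I_i)$ is in general positive-dimensional; there is no reason its points would be isolated, and the $2$-CY property of the ambient category is irrelevant to computing $\chi(Gr_{\mathbf{e}}(I_i))$. The paper's key device is a non-negative $\Z$-grading on $Q$ (diagonal arrows degree $1$, horizontal and vertical arrows degree $0$) under which $W$ is homogeneous of degree $1$, so $J(Q,W)$ and each $I_i$ inherit gradings. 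The grading produces a $\mathbb{G}_m$-action on $Gr_{\mathbf{e}}(I_i)$ whose fixed-point locus is the \emph{graded} Grassmannian $Gr^{\Z}_{\mathbf{e}}(I_i)$, and Bia\l{}ynicki-Birula's theorem gives $\chi(Gr_{\mathbf{e}}(I_i))=\chi(Gr^{\Z}_{\mathbf{e}}(I_i))$. Only then does the problem reduce to counting graded submodules, which indeed form a finite set of isolated points. Your claim that ``submodules'' biject with right ideals of the box poset should read ``graded submodules''; without the grading and the torus-fixed-point argument, the reduction you need has no justification.

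The second, smaller gap is that you do not actually establish the structural lemma you call the core combinatorial step. The paper proves (Lemma~\ref{keylemmadt}) that the degree-$d$ component of $e_j J e_i$ is at most one-dimensional and is nonzero precisely when $0\le d\le\min(p-1,q-1)$ and $j$ lies in $R_i-(d,d)$, by embedding $J$ in the graded path category of an infinite commutative quiver and tracking which monomials $a^r b^s c^t$ survive the boundary truncation. That computation is what makes each graded layer $(I_i)_d$ thin with rectangular support and makes multiplication by $a$, $b$, $c$ induce bijections between layers, which in turn is precisely what identifies graded submodules with $3D$ Young diagrams in the box. Your sketch of the intended structure of $I_i$ is plausible but is a claim, not a derivation; this lemma is where the real work of the proof lives.
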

In our representation-theoretic approach, the poset of right ideals in $L_r \times L_s \times L_t$ 
appears as the poset of graded submodules in an indecomposable injective module over the Jacobian algebra of
the quiver $Q$ endowed with its canonical potential. Notice that the right ideals of the poset $L_r \times L_s \times L_t$ 
are the $3D$ Young diagrams contained in the integral rectangular cuboid of side lengths $r,s$ and $t$,
as in Weng's description. We deduce that the non-zero coefficients of the $DTF$-polynomials equal $1$. 
It would be interesting to investigate the generalization of our results to partial flag varieties in
other types starting from their categorifications constructed by Geiss--Leclerc--Schr\"oer in 
\cite{GeissLeclercSchroeer08}.

\subsection{Related work}  
In this paper, the proof of Theorem~\ref{thm: g-vectors} on the $g$-vectors
of Pl\"ucker coordinates is based on a computation in the stable category of Cohen--Macaulay modules.
Alternatively, one can prove the theorem by constructing resolutions in the module category
itself in analogy with Baur--Bogdanic's construction of projective covers in \cite{BaurBogdanic17}.
This proof can be found in Lemma~4.9 of the first arXiv version \cite{BakshiKeller24v1} of this paper.
Yet another proof could be obtained using perfect matching modules and their projective
resolutions computed by Canakci--King--Pressland in \cite{CanakciKingPressland24}.

Let us point out that the computation of the $DT$-$F$-polynomials in Theorem~\ref{thm:Weng}
is related to the computation of cluster characters of twisted Pl\"ucker coordinates by 
Canakci--King--Pressland, who in \cite{CanakciKingPressland24} relate them to 
Marsh--Scott's combinatorial dimer partition functions \cite{MarshScott16} 
and similar formulas by Muller--Speyer \cite{MullerSpeyer17}.


\subsection*{Acknowledgments}

The authors would like to thank an anonymous referee for a careful reading of the manuscript and for several helpful suggestions that improved the exposition.
S.~B.~thanks the Universit\'e Paris Cit\'e and the Institut de math\'ema\-tiques de Jussieu--Paris Rive Gauche for the hospitality he
enjoyed during stays in June~2022 and May~2024. He would also like to thank Prof.~D.~Prasad, Prof.~S.~S.~Kannan and
Prof.~K.~V.~Subrahmanyam for their constant encouragement. 
During this work, he was partially supported by postdoc grants from Tata Institute of Fundamental Research, Mumbai and Indian Institute of Technology, Bombay. He dedicates this research work to the memory of his father Shri Tapan Kumar Bakshi. 

Both authors acknowledge support by the French ANR grant CHARMS 19 CE40 0017 headed by Yann Palu.
   
\section{Background on Cluster Algebras and $g$-vectors}\label{s.Introduction-1}
\begin{magenta}
In this section, we describe the categorical framework that underlies our interpretation
of extended $g$-vectors.  We begin by recalling the relevant notions concerning stably
$2$-Calabi–Yau Frobenius categories with cluster-tilting objects, including mutation
and the index of an object relative to a fixed cluster-tilting object.  In particular, we 
establish the compatibility of mutation of cluster-tilting objects
with the mutation of the indices of their indecomposable summands 
and show how these indices decategorify to extended $g$-vectors.
We then specialise to the Jensen–King–Su model for the Grassmannian, describing the
completed preprojective algebra of affine type $\widetilde{A}_{n-1}$, the boundary
algebra $B$, and the Frobenius category $\cm(B)$ of Cohen–Macaulay modules.
Within this categorical model, we recall how rank-one modules correspond to
Plücker coordinates and how non-crossing collections give rise to cluster-tilting
objects in the Grassmannian cluster category $\cm(B)$.

\end{magenta}

\subsection{From ice quivers to cluster algebras with coefficients} \label{ss:from-ice-quivers} \begin{magenta} In this subsection,
we recall the basic definitions of cluster algebras with coefficients and introduce extended $g$-vectors.  
We outline their behaviour under mutation and explain their role as combinatorial invariants of cluster variables.
The extended $g$-vectors will be compared with categorical indices in later sections.
\end{magenta}

A {\em quiver} $Q$ is a directed graph $(Q_0, Q_1, s, t)$ formed by a set of vertices $Q_0$, a set of arrows $Q_1$ and two maps 
$s: Q_1 \rightarrow Q_0$ and $t: Q_1 \rightarrow Q_0$ which take an arrow to its source and target respectively. A quiver $Q$ is {\em finite} if both $Q_0$ and $Q_1$ are finite. Let $Q_0 = \{1,2, \ldots, m\}$. 
An {\em ice quiver} is a quiver endowed with a subset $F$ of the set
$Q_0$ of its vertices. The vertices in $F$ are called {\em frozen}. We usually assume
that the frozen vertices are the vertices $r+1$, \ldots, $m$ for some $r\leq m$. 
We then say that $(Q,F)$ is an ice quiver of {\em type $(r,m)$}. The {\em principal part} of $Q$ is the full subquiver on the non frozen vertices. An arrow $\alpha$ is a {\em loop} if its source and target coincide. A {\em $2$-cycle} of $Q$ is a pair of distinct arrows $\beta$ and $\gamma$ such that $s(\beta) = t(\gamma)$ and $t(\beta) = s(\gamma)$. For two vertices $i$ and $j$ let $a_{ij}$ denote the number of arrows from $i$ to $j$. Let $b_{ij} = a_{ij} - a_{ji}$. We associate to each such quiver $Q$ the 
$m\times r$-matrix $\tilde{B}=\tilde{B}_Q$ whose $ij$-th entry is given by $b_{ij}$. 
It is called the {\em extended exchange matrix} of $Q$.
The {\em skew-symmetric} submatrix $B$ formed by the first $r$ rows is the 
{\em principal part} of $\tilde{B}$. 
Clearly, if $Q$ does not have loops nor $2$-cycles the matrix $\tilde{B}$ determines $Q$ up to the arrows between the
frozen vertices, which will play no role in this article.

Let $Q$ be an ice quiver of type $(r,m)$ without loops or $2$-cycles. We
recall from \cite{fomin2007cluster} how to construct the associated
cluster algebra.

Let $k$ be a  non-frozen vertex of $Q$. The {\em mutated quiver} $\mu_k(Q)$ is an ice quiver of type $(r, m)$ with the same vertex set and whose arrows can be obtained as follows:

\begin{itemize}
\item[(1)] for each subquiver $i \longrightarrow  k \longrightarrow j$, add a new 
arrow $i \longrightarrow j$;
\item[(2)] reverse all arrows with source or target $k$;
\item[(3)] remove the arrows in a maximal set of pairwise disjoint $2$-cycles.
\end{itemize}

Let 
\[ 
\sn(x) =  \left\{ \begin{array}{ll} -1 & \mbox{if $x < 0$;} \\
0 & \mbox{if $x=0$;}\\
1 & \mbox{if $x >0$.}
\end{array} \right.
\]
The mutated quiver $\mu_{k}(Q)$ corresponds to the mutated matrix 
$\mu_k(B)=(b'_{ij})$, whose coefficients $b'_{ij}$ are given by 
\[
b_{ij}' = \left\{ \begin{array}{ll} -b_{ij} & \mbox{if $i= k$ or $j=k$;} \\
b_{ij} + \sn(b_{ik})[b_{ik}b_{kj}]_{+}  & \mbox{otherwise,} 
\end{array} \right.
\]
where, for a real number $x$, we denote by $[x]_+$ the maximum between
$x$ and $0$.

Let $\mathbb{T}_r$ be the $r$-regular tree, where the $r$ edges emanating from each vertex are labeled by the numbers $1,2,\ldots,r$. 
Let us fix an initial vertex $t_0$ of $\mathbb{T}_r$.  To each vertex $t$, we associate an ice quiver $Q(t)$ as follows: 
\begin{itemize}
\item[(1)] we put $Q(t_0) = Q$; 
\item[(2)] whenever there is an edge labeled $k$ between two vertices $t$ and $t'$ in $\mathbb{T}_r$,
we put $Q(t') = \mu_{k}(Q(t))$.
\end{itemize}
The family of quivers $Q(t)$, where $t$ runs through the vertices of $\mathbb{T}_r$, is the {\em quiver pattern} associated with $Q$. 
The associated {\em matrix pattern} is the family of the matrices $\tilde{B}_{Q(t)}=\tilde{B}(t)$. Of course, it can be
defined directly from the matrix $\tilde{B}(t_0)$.

Let $x_1,x_2, \ldots x_m$ be $m$ indeterminates. To each vertex $t$ of $\mathbb{T}_r$, we associate a sequence called a {\em cluster}
of rational expressions $X_i(t)$, $1\leq i \leq m$, called {\em cluster variables}. They are defined recursively as follows:
\begin{itemize}
\item[(1)] $X_i(t_0) = x_i$, $1 \leq i \leq m$;
\item[(2)] $X_i(t) = x_i$, $r+1 \leq i \leq m$ for all $t$;
\item[(3)] whenever there is an edge labeled $k$ between two vertices $t$ and $t'$ in $\mathbb{T}_r$, we
define $X_i(t') = X_i(t)$ for all $i \neq k$, and $X_k(t')$ is determined by the {\em exchange relation}
\[ 
X_k(t)X_k(t') = \prod_{i \rightarrow k} X_i(t) +  \prod_{k \rightarrow j} X_j(t),
\]
where the first product is taken over the set of arrows with target $k$ and the second product over
the set of arrows with source $k$.
\end{itemize}
For a vertex $t$ of $\mathbb{T}_{r}$ , we denote by $X(t)$ the sequence $(X_1(t), \ldots, X_m(t))$.
The family $(Q(t), X(t))$, where $t$ runs through the vertices of $\mathbb{T}_r$, is called a 
{\em cluster pattern} with {\em initial seed} $(Q(t_0), X(t_0))$. Each pair $(Q(t), X(t))$ is called a {\em seed}. 
Following \cite{fomin2007cluster}, we define
the {\em cluster algebra $\mathcal{A}_Q$} associated with the cluster pattern $(Q(t), X(t))_{t\in \mathbb{T}_r}$
as the $\mathbb{C}[x_{r+1}, x_{r+2}, \ldots x_m]$-subalgebra of $\mathbb{C}(x_1, x_2, \ldots x_m)$ 
generated by all the cluster variables. A {\em cluster monomial} is a product of cluster variables lying in the same cluster.

 \subsection{Extended $g$-vectors}  \label{ss:extended g-vectors} We will give three constructions of the
 extended $g$-vector of a cluster variable. We fix an ice quiver $Q$ as above.
 
Historically, the $g$-vector of a cluster variable was first defined using a $\mathbb{Z}^m$-grading on a 
cluster algebra with principal coefficients (see \cite[\S6]{fomin2007cluster})\begin{magenta}.\end{magenta} Let $Q_{pr}$ be the quiver obtained from $Q$ by adding new frozen vertices $m+1, m+2, \ldots, 2m$ and new arrows $i \rightarrow i+m, 1\leq i \leq m$. We call the quiver $Q_{pr}$ the {\em principal extension} of $Q$. For example, we have
\[Q = \begin{tikzcd}
	& \color{blue}{2} \\
	\color{blue}{3} && 1
	\arrow[from=2-1, to=1-2]
	\arrow[from=1-2, to=2-3]
	\arrow[from=2-3, to=2-1]
\end{tikzcd} \quad Q_{pr} =
\begin{tikzcd}
	& \color{blue}{2} \\
	\color{blue}{3} && 1 \\
	\color{blue}{6} & \color{blue}{5} & \color{blue}{4.}
	\arrow[from=2-1, to=1-2]
	\arrow[from=1-2, to=2-3]
	\arrow[from=2-3, to=2-1]
	\arrow[from=2-3, to=3-3]
	\arrow[from=1-2, to=3-2]
	\arrow[from=2-1, to=3-1]
\end{tikzcd}\]
All the vertices coloured blue are frozen. 
By Cor.~6.2 of \cite{fomin2007cluster}, the cluster algebra ${\cal{A}}_{Q_{pr}}$ with principal coefficients is contained
in the algebra $\mathbb{C}[x_1^{\pm},x_2^{\pm},\ldots x_{m}^{\pm}, x_{m+1},\ldots, x_{2m}]$. 
Denote by $Q_u$ the quiver obtained from the ice quiver $Q$ by declaring the frozen vertices mutable and by $B_u$ the
corresponding exchange matrix. It is a skew-symmetric $m\times m$-matrix. We endow the ring 
$\mathbb{C}[x_1^{\pm},x_2^{\pm},\ldots x_{m}^{\pm}, x_{m+1},\ldots, x_{2m}]$ with a $\mathbb{Z}^m$-grading as follows: 
We declare 
\[ 
{\rm{deg}} (x_j) = e_j \quad \text{and } {{\rm{deg}}(x_{m+j})} = -B_{u} e_j, \text{ for }  1 \leq j \leq m. 
\]
Using \cite[Proposition 6.1]{fomin2007cluster} one checks that for each $t\in\mathbb{T}_r$ and each $1\leq i \leq r$,
 the cluster variables $X_{i}(t)$ of ${\cal{A}}_{Q_{pr}}$ are homogeneous for this grading. The degree is by definition the 
 {\em extended $g$-vector} of the cluster variable. It lies in $\Z^m$. 
 For $t\in\mathbb{T}_r$, the {\em $G$-matrix $G(t_0,t)$} with respect to the initial vertex $t_0$ is the matrix of size $m\times m$ 
whose columns are  the vectors $g_j(t)$ of $\Z^m$. By definition, the matrix $G(t_0, t_0)$ is the identity matrix.

For the second construction, we first need to introduce another family of vectors called 
\mbox{$c$-vectors}. 
Let $B(t_0)$ be the principal part of the
 exchange matrix $\tilde{B}(t_0)$ associated with the ice quiver $Q$. 
 Then the matrix pattern associated with the block matrix $[B(t_0), I_r]^T$
 consists of matrices of the form $[B(t), C(t)]^T$, where $B(t)$ is the principal part of $\tilde{B}(t)$ and $C(t)$ is
 an integer $r\times r$-matrix called the $c$-matrix associated with $Q$ and $t$. Its columns $c_i(t)$ are called
 the {\em $c$-vectors} at the vertex $t$. A fundamental theorem first proved in \cite{DerksenWeymanZelevinsky10} states that 
 each $c$-vector is non zero with entries which are either all non negative or all non positive (sign-coherence
 of the $c$-vectors). We define a non frozen vertex $i$ of the quiver $Q(t)$ to be {\em green} if the
 corresponding $c$-vector has all non negative coefficients; otherwise, it is defined to be {\em red}.
 Notice that this colouring depends not only on the cluster pattern but also on the choice of
 the initial vertex $t_0$. We define all frozen vertices to be {\em blue}.
 Let $e_1, e_2, \ldots ,e_m$ be the standard basis of the free abelian group of rank $m$. In the second construction,
the extended $g$-vectors
\[
g_i^{t_0}(t)=g_i(t)
\]
with respect to the initial vertex $t_0$ at a vertex $i$ of $Q(t)$ are obtained recursively as follows: 
\begin{itemize}
\item[(1)] $g_{i}(t_0) = e_i$, $1 \leq i \leq m$;
\item[(2)] whenever there is an edge labeled $k$ between two vertices $t$ and $t'$ in $\mathbb{T}_r$, we define $g_i(t') = g_i(t)$ for all $i \neq k$ and $g_k(t')$ is given by
\[ g_k(t') = 
 \begin{cases} -g_k(t) + \sum\limits_{k \rightarrow i} g_i(t), & \text{if $k$ is green in $Q(t)$ with respect to $t_0$;}  \\
 -g_k(t) + \sum\limits_{i \rightarrow k}  g_i(t), & \text{if $k$ is red in $Q(t)$ with respect to $t_0$,} 
\end{cases}
\]
where the sums are taken over the set of arrows with source (respectively, target) $k$.
\end{itemize}
Let us rewrite this definition in terms of the $g$-matrices: 
Let $\mathbb{I}_m$ denote the $m \times m$ identity matrix. 
For $\eps \in \{1, -1\}$, let $E_{k,\eps}(Q)$ denote the matrix 
of size $m\times m$ whose entries are given by
\[ (E_{k,\eps}(Q))_{ij} = 
\begin{cases}
1 & \text{$i \neq k$ and $j=i$}\\
0 & \text{$i \neq k$ and $j \neq i$}\\
-1& \text{$i =j = k$}\\
[-\eps b_{ik}]_+ & \text{$i \neq k$ and $j \neq i$} .
\end{cases}
\] 
Note that $E_{k,\eps}(Q)$ differs from $\mathbb{I}_m$ only in the $k$th column. 
Now we can reformulate the second construction as follows:
Whenever there is an edge labeled $k$ between two vertices $t$ and $t'$ in $\mathbb{T}_r$, 
we have 
\begin{equation} \label{gvectorscolor}
G(t_0,t') = G(t_0,t)E_{k,\eps}(Q(t)),
\end{equation}
where $\eps=1$ if $k$ is green in $Q(t)$ and $\eps=-1$ if $k$ is red in $Q(t)$.
The fact that this construction yields the same
$g$-matrices as the first one is proved as follows: First, in the above situation, the $k$th row
of the $g$-matrix $G(t_0, t)$ is non zero with all coefficients non negative or non positive by
Conjecture~1.3 of \cite{DerksenWeymanZelevinsky10}, proved in that paper. Moreover, the sign
of the $k$th row of $G(t_0, t)$ equals the sign of the $c$-vector $c_k(t)$ since the
$c$-matrix $C(t)$ is the inverse transpose of $G(t_0, t)$ by Theorem~1.2
of \cite{nakanishi2012tropical}. Thus, the above formula follows from Conjecture~1.6
of \cite{DerksenWeymanZelevinsky10}, proved in that paper.

We now give the third construction of the $g$-vectors respectively the $g$-matrices $G(t_1,t_2)$.
It uses induction on the distance between $t_1$ and $t_2$ in the regular tree $\mathbb{T}_r$.
Of course, we define the matrices $G(t,t)$ to be the identity matrices $I_m$. Now suppose
that $G(t_1, t_2)$ has been defined and that $t'_1$ is a vertex of the $r$-regular tree linked to $t_1$ 
by an edge labeled $k$. By Conjecture~1.3 of \cite{DerksenWeymanZelevinsky10}, proved in that
paper, the coefficients in a given row of the matrix $G(t_1, t_2)$
all have the same sign. Let us denote by $\eps\in\{1, -1\}$ the common sign of the coefficients in the
$k$th row. Then we construct $G(t_1', t_2)$ via
\begin{equation} \label{gvectorssign}
G(t'_1, t_2) = E_{k,\eps}(Q(t_1)) \, G(t_1, t_2).
\end{equation}
We can reformulate this definition as follows: Define $\phi: \Z^m \to \Z^m$ by
\[
\phi(v) = \begin{cases}
\phi_{+}(v) & \text{if $v = \sum x_i e_i$ with $x_{k} \geq 0$} \\
\phi_{-}(v) & \text{if $v = \sum x_i e_i$ with $x_{k} \geq 0$},
\end{cases} 
\]
where
\[ 
\phi_{+}(e_k) =  -e_k + \sum\limits_{i \rightarrow k}e_i 
\quad\mbox{and}\quad
\phi_{-}(e_k) = -e_k + \sum\limits_{k\rightarrow j} e_j .
\]
Then if $v$ is the $j$th column of $G(t_1, t_2)$ and $v'$ the $j$th column of $G(t_1', t_2)$, we
have $v'=\phi(v)$.  This formula is equivalent to Conjecture~7.12 in \cite{fomin2007cluster}, which is 
now proved (see \S 9,\cite{DerksenWeymanZelevinsky10}). 

\subsection{Indices in $\Hom$-infinite stably $2$-CY Frobenius categories}
\begin{magenta} In this subsection, we develop the categorical framework which we use in the sequel.
We introduce stably $2$-Calabi--Yau Frobenius categories, which provide a natural
setting for the categorification of cluster algebras with coefficients.  In the
subsections that follow, we define the index of an object with respect to a
cluster-tilting object, study its behaviour under mutation, and explain its
relationship with (extended) $g$-vectors. It is in this framework that we will 
later compute the $g$-vectors of the Plücker coordinates in a Grassmannian cluster
algebra.
\end{magenta}

A {\em Krull--Schmidt} category is an additive category where indecomposable objects have local endomorphism rings and
each object decomposes into a finite direct sum of indecomposable objects (which are then unique up to isomorphism
and permutation). Recall that the endomorphism ring $E$ of any object in a 
Krull-Schmidt category is semiperfect, i.e.\ each finitely generated $E$-module has a projective cover
(see \cite[Corollary 4.4]{krause2015krull}). An object in a Krull--Schmidt category is {\em basic} if its indecomposable
summands occur with multiplicity at most one. A basic object $X$ is determined up to isomorphism by the full 
additive subcategory ${{\rm{add}}}(X)$ whose objects are the direct factors of finite direct sums of copies of $X$. 

Let $K$ be an algebraically closed field. 
Let $\mathcal{C}$ be a $K$-linear triangulated Krull--Schmidt category 
with suspension functor $\Sigma$. The category $\mathcal{C}$ is {\em $2$-Calabi-Yau}, if 
it is $\Hom$-finite and the square of the 
suspension functor is a Serre functor for $\cal{C}$ so that we have bifunctorial isomorphisms
\[ 
D{\cal{C}}(X, Y) \xrightarrow{\sim} {\cal{C}}(Y, \Sigma^{2} X) ,
\]
where $D$ denotes the duality functor $\Hom_K(?, K)$ over the ground field $K$. 

Let $\mathcal{C}$ be an exact category. It is said to have {\em enough projectives} if for each $X \in \cal{C}$, there is a 
deflation $P \rightarrow X$ with a projective $P$. Dually, $\cal{C}$ is said to have {\em enough injectives}  if for each $X \in \cal{C}$, 
there is an inflation $X \rightarrow I$ with an injective $I$. An exact category $\mathcal{C}$ is {\em Frobenius} if it 
has enough projectives and enough injectives and the class of the projective objects coincides with that of the injective objects. 
We recall from \cite{keller1996derived} that the {\em stable category $\ul{\mathcal{C}}$} associated with a Frobenius
category $\mathcal{C}$ has the same objects as $\mathcal{C}$. A morphism of $\underline{\mathcal{C}}$ is the equivalence class 
$\bar{f}$ of a morphism $f: A \rightarrow B$ of $\mathcal{C}$ modulo the subgroup of morphisms factoring through 
an injective of $\mathcal{C}$. The stable category $\underline{\mathcal{C}}$ of a Frobenius category $\mathcal{C}$ is a 
triangulated category (see, \cite[Theorem 9.4]{happel1987derived}). 
Let $\mathcal{C}$ be a $K$-linear Krull--Schmidt Frobenius category. We say that $\mathcal{C}$ is
{\em stably $2$-Calabi--Yau} ($2$-CY for short) if $\ul{\mathcal{C}}$ is $2$-Calabi--Yau (hence,
in particular $\Hom$-finite).


In the sequel, we assume that all $K$-linear categories under consideration
are moreover enriched over the symmetric monoidal category of
pseudocompact vector spaces, cf. section~4 of \cite{van2015calabi}.

Let $\mathcal{C}$ be a $K$-linear category which is either triangulated $2$-CY or exact Frobenius
stably $2$-CY. Recall that, if $\mathcal{C}$ is triangulated, for objects $X,Y$ of $\cal{C}$ 
and any integer $i$, one defines
\[ 
{\rm{Ext}}^i_{\cal{C}} (X,Y) = {\mathcal{C}}(X, \Sigma^i Y).
\]
An object $X$ of $\cal{C}$ is {\em rigid} if 
\[ 
{\rm{Ext}}^1_{\mathcal{C}} (X,X) = 0.
\] 
A {\em cluster tilting object} is a basic object $T$ of $\cal{C}$ such that $T$ is rigid and each object $X$ satisfying 
${\rm{Ext}}_{\mathcal{C}}^1(T,X) = 0$ belongs to ${\rm{add}}(T)$. For a cluster tilting object $T$, we write $Q_T$
 for the quiver of ${\rm{End}}_{\cal{C}}(T)$. Notice that by our assumptions, the endomorphism algebra ${\rm{End}}_{\cal{C}}(T)$
 is a pseudocompact algebra (sometimes even finite-dimensional) so that its quiver is well-defined.
 The quiver $Q_T$ is also called the {\em endoquiver} of $T$. It is constructed as follows. 
 For two indecomposable objects $T'$ and $T''$ of $\add(T)$, let $ {\rm{rad}}(T',T'')$ be the space of non isomorphisms from $T'$ to $T''$ so that $ {\rm{rad}}$ is the radical ideal of the category $\add(T)$. 
Let $T_1, T_2, \ldots, T_m$ denote representatives of the isomorphism classes of the indecomposable 
objects of ${\rm{add}}(T)$. By definition, the vertices of $Q_T$ are the integers $1$, \ldots, $m$
corresponding to the indecomposables $T_i$ and 
 the number of arrows from $i$ to $j$ is the dimension of the {\em space of irreducible morphisms}
\[ 
{\rm{irr}}(T_i, T_j) = {\rm{rad}}(T_i,T_j)/{\rm{rad}}^2(T_i,T_j).
\] 
We always assume it is finite. If $\mathcal{C}$ is a Frobenius category, we define the {\em frozen subquiver} of
$Q_T$ to be the full subquiver on the vertices $i$ such that $T_i$ is projective-injective. Thus, $Q_T$ becomes
an {\em ice quiver}. 

Combinatorial cluster structures on stably $2$-Calabi--Yau Frobenius categories were studied in \cite{buan2009cluster}. 
We briefly recall their description. Let $Q$ be an ice quiver.
Let $(\mathcal{C}, T)$ be a $2$-{\em Calabi--Yau realization} of $Q$, i.~e.~a pair consisting of
a stably $2$-CY Frobenius category $\cc$ and a cluster-tilting object $T$ in $\cal{C}$ such that
\begin{itemize}
\item[a)] the endoquiver $Q_{T}$ is isomorphic, as an ice quiver, to $Q$ and 
\item[b)] the cluster-tilting subcategories of $\cal{C}$
determine a cluster structure on $\cal{C}$ in the sense of section~I.1
of \cite{buan2009cluster}. 
\end{itemize}
By Theorem~I.1.6 of [loc.~cit.], condition b) holds if no cluster-tilting
object of $\cal{C}$ has loops or $2$-cycles in its quiver. By Prop.~2.19 (v) of \cite{GeissLeclercSchroer11},
this holds for many stably $2$-CY categories occuring in Lie theory.

Let $T = \oplus_{i=1}^m T_i$ be the decomposition of the basic object $T$ into indecomposables. 
Let $T_1, T_2, \ldots, T_r$ denote the non-projective indecomposable summands and 
$T_{r+1}, \ldots, T_m$ the projec\-tive-injective indecomposable summands. 
The non-projective indecomposable summands of $T$ correspond to the non frozen initial cluster variables 
and the projective-injective indecomposable summands correspond to the frozen variables. 
The mutation at a non frozen vertex $k$ leads to the cluster tilting object 
\[
\mu_k(T) = T_k^* \oplus \bigoplus_{i\neq k} T_i .
\] 
Recall from Lemma~2.2 of \cite{FuKeller10}  that under the assumption b), the spaces
$\Ext^1_{\mathcal{C}}(T_k, T_k^*)$ and $\Ext^1_{\mathcal{C}}(T_k^*, T_k)$ are one-dimensional 
so that we have non split {\em exchange conflations}
\[
\begin{tikzcd}
	{T_k^*} & E & {T_k}
	\arrow[tail, from=1-1, to=1-2]
	\arrow[two heads, from=1-2, to=1-3]
\end{tikzcd} \quad\mbox{and}\quad
\begin{tikzcd}
	{T_k} & E' & {T_k^*}
	\arrow[tail, from=1-1, to=1-2]
	\arrow[two heads, from=1-2, to=1-3]
\end{tikzcd}
\]
whose middle terms are unique up to isomorphism.
\begin{magenta}
We restate the following lemma from \cite{buan2009cluster} which compares the cluster tilting objects of $\cal{C}$ and $\underline{\cal{C}}$.  

\begin{lemma}\cite[Lemma II.1.4]{buan2009cluster}
Let $\cal{C}$ be a stably $2$-CY Frobenius category. Then $T$ is a cluster-tilting object in $\cal{C}$ if and only if 
its image $\underline{T}$ in $\underline{\mathcal{C}}$ is a cluster-tilting object.
\end{lemma}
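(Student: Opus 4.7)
The plan is to reduce everything to the identification of $\Ext^1$ in the Frobenius category $\cc$ and in its stable category $\ul{\cc}$. For any $X, Y$ in $\cc$, pulling back the conflation $Y \rightarrowtail I \twoheadrightarrow \Sigma Y$ along morphisms $X \to \Sigma Y$ yields the standard natural isomorphism
\[
\Ext^1_{\cc}(X, Y) \cong \Hom_{\ul{\cc}}(X, \Sigma Y) = \Ext^1_{\ul{\cc}}(\ul{X}, \ul{Y}),
\]
so rigidity passes through the projection $\cc \to \ul{\cc}$ in both directions. I would adopt (and briefly justify) the standard convention that a cluster tilting object $T$ in the Frobenius category $\cc$ contains every indecomposable projective-injective as a direct summand: indeed, for such a $P$ we have $\Ext^1_{\cc}(T,P)=0$ automatically, forcing $P \in \add(T)$ by the cluster tilting condition. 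Under this convention, basicness of $T$ in $\cc$ is equivalent to basicness of $\ul{T}$ in $\ul{\cc}$, since the projective-injective summands become zero in $\ul{\cc}$.

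For the forward direction, suppose $T$ is cluster tilting in $\cc$ and decompose $T = T' \oplus P$ with $P$ the sum of the projective-injective summands. Given $\ul{X}$ with $\Ext^1_{\ul{\cc}}(\ul{T}, \ul{X}) = 0$, I would lift to an object $X$ of $\cc$. Since $P$ is projective, $\Ext^1_{\cc}(P, X) = 0$ automatically, while the $\Ext^1$ over $T'$ agrees in $\cc$ and in $\ul{\cc}$. Thus $\Ext^1_{\cc}(T, X) = 0$, hence $X \in \add(T)$ by the cluster tilting property of $T$ in $\cc$, and therefore $\ul{X} \in \add(\ul{T})$.

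For the reverse direction, assume $\ul{T}$ is cluster tilting in $\ul{\cc}$. Given $X \in \cc$ with $\Ext^1_{\cc}(T, X) = 0$, the $\Ext^1$ identification gives $\Ext^1_{\ul{\cc}}(\ul{T}, \ul{X}) = 0$, so $\ul{X} \in \add(\ul{T})$. Using the Krull--Schmidt property in $\cc$, I would decompose $X = X' \oplus Q$ with $X'$ having no projective-injective summand. Then $\ul{X} = \ul{X'}$ in $\ul{\cc}$, and each indecomposable summand of $\ul{X'}$ is the image of a non-projective indecomposable summand of $T$; lifting back via the bijection between non-projective indecomposables of $\cc$ and indecomposables of $\ul{\cc}$ shows $X' \in \add(T)$. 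The summand $Q$ lies in $\add(T)$ by the standing convention on projective-injectives, so $X \in \add(T)$.

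The main obstacle is the bookkeeping in the reverse direction: lifts of objects from $\ul{\cc}$ to $\cc$ are only defined up to addition of projective-injective summands, so one must combine the standard convention on projective-injectives with the Krull--Schmidt uniqueness in $\cc$ to promote the conclusion $\ul{X} \in \add(\ul{T})$ from the stable category to the statement $X \in \add(T)$ in the Frobenius category itself.
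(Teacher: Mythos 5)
Your overall strategy is the right one and is essentially what Buan--Iyama--Reiten--Scott do in the cited result: transport rigidity and the maximality condition back and forth through the natural isomorphism $\Ext^1_{\cc}(X,Y)\cong\Ext^1_{\ul\cc}(\ul X,\ul Y)$, and handle the projective-injective summands separately. (The present paper does not reprove the lemma; it only cites \cite[Lemma II.1.4]{buan2009cluster}, so the comparison is with their argument.) The forward direction as you wrote it is fine.

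There is, however, a genuine gap in the reverse direction, at the final step where you conclude that the projective-injective summand $Q$ of $X$ lies in $\add(T)$ ``by the standing convention on projective-injectives.'' The justification you gave for that convention is circular here: you derived $P\in\add(T)$ for every indecomposable projective-injective $P$ \emph{from} the hypothesis that $T$ is cluster-tilting in $\cc$, which in the reverse direction is exactly what you are trying to prove. Knowing only that $\ul T$ is cluster-tilting in $\ul\cc$ gives no information about which projective-injectives of $\cc$ occur as summands of $T$, since all such summands vanish in the stable category. Indeed, without an extra hypothesis the reverse implication fails: if $T_0$ is cluster-tilting in $\cc$ and $P$ is an indecomposable projective-injective summand, set $T=T_0$ with the summand $P$ removed; then $\ul T=\ul{T_0}$ is still cluster-tilting in $\ul\cc$, yet $\Ext^1_{\cc}(T,P)=0$ with $P\notin\add(T)$, so $T$ is not cluster-tilting in $\cc$. (The same phenomenon undermines your claim that basicness of $T$ is equivalent to basicness of $\ul T$.) Buan--Iyama--Reiten--Scott avoid this by working with cluster-tilting \emph{subcategories} and taking the \emph{preimage} under $\cc\to\ul\cc$, which by construction contains all projective-injectives. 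To repair your argument, either add to the hypotheses that $T$ is basic and $\add(T)$ contains every indecomposable projective-injective of $\cc$, or pass to the subcategory/preimage formulation; with that fix the reverse direction goes through exactly as you sketched.
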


\subsection{A categorical model for Grassmannian cluster algebras} \label{sec:Grassmannian}
In this subsection, we recall the Jensen--King--Su model for Grassmannian cluster
algebras.  We describe the boundary algebra $B$, the Frobenius category $\cm(B)$,
its graded enhancement, and the indecomposable rank--one modules $L_I$ associated
with $k$-element subsets of $\{1, \ldots, n\}$.  We also recall the description of Plücker clusters and the
construction of the corresponding cluster–tilting objects.  These ingredients will be
fundamental for computing indices and extended $g$-vectors in later sections.

Let $K=\C$. Let $0<k<n$ be integers. 
Let $\Pi$ be the completed preprojective algebra of affine type $\tilde{A}_{n-1}$. Thus, for
$n=6$ the algebra $\Pi$ is the completed path algebra of the quiver
\begin{center}
\begin{tikzpicture}[scale=0.90,baseline=(bb.base)]
\path (0,0) node (bb) {};
\newcommand{\radius}{1.5cm}
\foreach \j in {1,...,6}{
  \path (90-60*\j:\radius) node[black] (w\j) {$\bullet$};
  \path (150-60*\j:\radius) node[black] (v\j) {};
  \path[->,>=latex] (v\j) edge[black,bend left=25,thick] node[black,auto] {$x_{\j}$} (w\j);
  \path[->,>=latex] (w\j) edge[black,bend left=20,thick] node[black,auto] {$y_{\j}$}(v\j);
}
\draw (90:\radius) node[above=3pt] {6};
\draw (150:\radius) node[above left] {5};
\draw (210:\radius) node[below left] {4};
\draw (270:\radius) node[below right] {3};
\draw (330:\radius) node[above right] {2};
\draw (30:\radius) node[above right] {1};
\end{tikzpicture}
\end{center}
subject to the $n$ relations $xy=yx$. Let $B$ be the quotient of $\Pi$ by the closed
ideal generated by the $n$ relations $x^k-y^{n-k}$. The algebra $\Pi$ is Noetherian
of global dimension $2$, cf. \cite{CrawleyBoeveyHolland98}, 
and $B$ is Noetherian of infinite global dimension. Moreover,
$B$ is $1$-Iwanaga--Gorenstein, i.e. we have
\[
\Ext_B^i(M,B)=0
\]
for all $i>1$ and all finitely generated (right) $B$-modules $M$, cf. Cor.~3.4 of \cite{JensenKingSu16} and
its proof. Let $\cm(B)$ denote the category of finitely generated (maximal) Cohen-Macauley $B$-modules,
i.e.~finitely generated $B$-modules $M$ such that
\[
\Ext_B^i(M,B)=0
\]
for all $i>0$. This is the category denoted by $\cm^G(\hat{R})$ in \cite{JensenKingSu16}.
Since $B$ is Gorenstein, the category $\cm(B)$ is a Frobenius category, whose
projective-injectives are the projectives of $\mod B$. It is shown in section~4 of \cite{JensenKingSu16}
that the associated stable category is $\Hom$-finite and $2$-Calabi--Yau as a triangulated
category. We also know from section~3 of [loc.~cit.] that the center $Z$ of $B$ is isomorphic
to the power series algebra $\C[[t]]$ by the map sending $t$ to $xy$. Each module
in $\cm(B)$ is finitely generated free over $Z$. Thus, if $L$ and $M$ are in $\cm(B)$, the
subspace $\Hom_A(L,M) \subseteq \Hom_Z(L,M)$, which is of finite codimension,
naturally becomes a pseudo-compact vector space and clearly this defines
the required enrichment on $\cm(B)$. Finally, the category $\cm(B)$ is Krull-Schmidt
because $B$ is a Noetherian quotient of a completed path algebra.

Let us recall the graded version of the category $\cm(B)$ from section~6
of \cite{JensenKingSu16}: We define the arrows $x_i$ of the quiver of the
completed preprojective algebra of type $\tilde{A}_{n-1}$ to be of degree $e_1\in \Z^2$ and
the arrows $y_j$ to be of degree $e_2 \in \Z^2$, where $e_1$ and $e_2$ are the vectors
of the standard basis in $\Z^2$. This defines a $\Z^2$-grading on the pseudocompact
completed preprojective algebra. The elements $x^k-y^{n-k}$ are not homogeneous for
this grading but become homogeneous for the induced grading by the
group $\Ga^\vee$ defined as the quotient of $\Z^2$ by the subgroup generated by $k e_1 - (n-k) e_2$.
Thus, the boundary algebra $B$ inherits a $\Ga^\vee$-grading. We have a surjective homomorphism
$\Ga^\vee \to \Z/n\Z$ taking the class of an element $(r,s)$ to the class of the difference $r-s$.
The corresponding $\Z/n\Z$-grading on $B$ corresponds to the decomposition of $B$ into
the sum of the $B e_i$, where $e_i$ is the idempotent given by the lazy path at the vertex $i$, $0\leq i<n$. 
The kernel of the surjective morphism $\Ga^\vee \to \Z/n\Z$ is free of rank one generated by the class
of $e_1+e_2$. This reflects the fact that the $Be_i$ admit $\Z$-gradings as modules
over the $\Z$-graded pseudocompact algebra $Z=\C[[t]]$, the center of $B$ generated
by $t=\sum_i x_i y_i$, which is of degree $e_1+e_2$. 
Let us denote by $\cm^{gr}(B)$ the category of $\Ga^\vee$-graded
pseudocompact $B$-modules. This is the category denoted by $CM_\Ga(\ol{R})$ in
section~6 of \cite{JensenKingSu16}. We have a functor forgetting the grading
\[
\cm^{gr}(B) \to \cm(B)
\]
and, as shown in Lemma~6.2 of \cite{JensenKingSu16}, every rigid module in $\cm(B)$
lifts to $\cm^{gr}(B)$ and the lift is unique up to a grading shift by a multiple of the degree
of $t$ if the module is indecomposable.

 A module $M\in \cm(B)$ is {\em of rank one} if each $Z$-module $M e_i$, $1\leq i \leq n$, is free of rank one. As shown
in Prop.~5.2 of \cite{JensenKingSu16}, the isomorphism classes of rank one modules are in bijection
with the $k$-element subsets $I$ of the set $\{1, \ldots, n\}$. The bijection sends a $k$-element
subset $I$ to the isomorphism class of the $B$-module $L_I$ such that $L_I e_i=Z$ for all 
$1\leq i\leq n$ and the arrows act as follows: An arrow $x: i \to i+1$ acts by multiplication by
$t$ if $i\in I$ and by $1$ if $i\not\in I$; an arrow $y: i+1 \to i$ acts by multiplication by $1$ if
$i\in I$ and by $t$ if $i\not\in I$. We call $L_I$ the {\em Jensen--King--Su module}
associated with $I$.
The cluster-tilting objects we will consider are sums of
certain rank one modules. Two $k$-element subsets $I$ and $J$ of $\{1, \ldots, n\}$ are
{\em non-crossing} (Def.~3 of \cite{OhPostnikovSpeyer15}) if there are no cyclically ordered
elements $a, b, c, d$ of $\{1, \ldots, n\}$ such that the elements $a$ and $c$ belong to $I\setminus J$ and
the elements $b$ and $d$ belong to $J\setminus I$. By Prop.~5.6 of \cite{JensenKingSu16}, this
happens if and only if we have 
\[
\Ext^1_B(L_I, L_J)=0.
\]
A {\em Pl\"ucker cluster} is a maximal
collection of pairwise non-crossing $k$-element subsets $I$ of $\{1, \ldots, n\}$. As conjectured
by Scott \cite{Scott06} and proved by Oh--Postnikov--Speyer \cite{OhPostnikovSpeyer15}, the
Pl\"ucker clusters are exactly the collections of $k(n-k)+1$ pairwise non-crossing $k$-element
subsets of $\{1, \ldots, n\}$. Using this and the results of Geiss--Leclerc--Schr\"oer 
\cite{GeissLeclercSchroeer08}, one
deduces (cf.~Remark~5.6 of \cite{JensenKingSu16}) that each Pl\"ucker cluster $\cp$
yields the cluster-tilting object
\[
T_\cp= \bigoplus_{I\in\cp} L_I.
\]

As shown in Theorem~4.5 of \cite{JensenKingSu16}, the stable category of the
Frobenius category $\cm(B)$ is triangle equivalent to the
stable category of the category $\sub(Q_k)$ of 
\cite{GeissLeclercSchroeer08}. Therefore, it follows
from Prop.~2.19 (v) of \cite{GeissLeclercSchroer11} that the endoquivers of its
cluster-tilting objects do not have loops or $2$-cycles. By Theorem~I.1.6 of
\cite{buan2009cluster}, we deduce that the cluster-tilting objects of $\cm(B)$
determine a cluster structure. Assume now that we have $n=4$ and $k=2$. 
Then, up to isomorphism, there are exactly six indecomposable rank one modules. Among
these, only $L_{13}$ and $L_{24}$ are non projective. The quiver of the category of
rank one modules looks as follows:
\[
\begin{tikzcd}
	{\color{blue}{12}} && {\color{blue}{23}} && {\color{blue}{34}} && {\color{blue}{41}} \\
	& 13 && 24 && 31 \\
	&& {\color{blue}{14}} && {\color{blue}{21}}
	\arrow[from=1-1, to=2-2]
	\arrow[from=2-2, to=3-3]
	\arrow[from=2-2, to=1-3]
	\arrow[from=1-3, to=2-4]
	\arrow[from=3-3, to=2-4]
	\arrow[from=2-4, to=3-5]
	\arrow[from=3-5, to=2-6]
	\arrow[from=1-5, to=2-6]
	\arrow[from=2-4, to=1-5]
	\arrow[from=2-6, to=1-7]
\end{tikzcd}
\]

Up to isomorphism, there are exactly two basic cluster tilting objects one having
$L_{13}$ and one having $L_{24}$ as a direct factor. These modules are linked
by the exchange conflations
\[
\begin{tikzcd}
{13} & {23\oplus 14} & {24.} 
	\arrow[tail, from=1-1, to=1-2]
	\arrow[two heads, from=1-2, to=1-3]
\end{tikzcd} \quad\mbox{and}\quad
\begin{tikzcd}
{24} & {34\oplus 21} & {31.} 
	\arrow[tail, from=1-1, to=1-2]
	\arrow[two heads, from=1-2, to=1-3]
\end{tikzcd} 
\]

\end{magenta}

\subsection{Index of a rigid object in Frobenius category} 
Let $\cal{C}$ be a stably $2$-CY Frobenius category that admits a cluster-tilting object $T$ and let $\underline{T}$
be its image in $\underline{\cal{C}}$. Let ${\cal{T}} = {\rm{add}}(T)$ and $\underline{\cal{T}} = {\rm{add}}(\underline{T})$. 
Recall that a {\em right ${\cal{T}}$-approximation} of an object  $X \in \cal{C}$ is a morphism $T_X \rightarrow X $
with $T_X \in \cal{T}$ such that each morphism $T' \rightarrow X$ with $T' \in \cal{T}$ factors through $T_X$.
Under our assumptions, right $\mathcal{T}$-approximations exist for all objects $X$  because $\mathcal{C}$ 
has enough projectives and $\underline{\mathcal{C}}$ is Hom-finite. Dually, left $\mathcal{T}$-approximations exist.
A right $\mathcal{T}$-approximation $T_X \to X$ is called {\em minimal} if 
\[ 
{\cal{C}}(T,T_X) \rightarrow {\cal{C}}(T,X) 
\] 
is a projective cover for the ${\rm{End}}(T)$-module ${\cal{C}}(T,X)$.
Since each object of $\mathcal{C}$ admits a left $\mathcal{T}$-approximation and a right $\mathcal{T}$-approximation,
the subcategory $\cal{T}$ is an example of a {\em cluster tilting subcategory} \cite{keller2008acyclic} or a 
{\em maximal $1$-orthogonal subcategory} of $\cal{C}$ in the sense of Iyama \cite{iyama2007higher}.

\begin{proposition} \cite[Proposition 4]{keller2008acyclic}
\label{rightapproximationprop}
For each   $X \in \cal{C}$ there is a conflation 
\begin{equation}
\label{eqn:1}
\begin{tikzcd}
	{T_1} & T_0 & {X}
	\arrow[tail, from=1-1, to=1-2]
	\arrow[two heads, from=1-2, to=1-3]
\end{tikzcd}
\end{equation}

such that $T_0$ and $T_1$ belong to $\cal{T}$ with the map  $T_0  \overset{h}{\longrightarrow} X$ a minimal right ${\cal{T}}$-approximation.
\end{proposition}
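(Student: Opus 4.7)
The plan is to build $T_0$ from the projective cover of the $\End_\mathcal{C}(T)$-module $\mathcal{C}(T,X)$, show that the resulting map $h\colon T_0\to X$ is a deflation using that $\mathcal{C}$ has enough projectives and that every projective-injective is a summand of $T$, and finally deduce that the kernel of $h$ lies in $\cal{T}$ from the cluster-tilting property together with the rigidity of $T$.

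First, since $\mathcal{C}$ is Krull--Schmidt and enriched over pseudocompact vector spaces, the algebra $\End_\mathcal{C}(T)$ is semiperfect, so the pseudocompact right module $\mathcal{C}(T,X)$ admits a projective cover. Via the Yoneda equivalence between $\add(T)$ and the category of pseudocompact projective $\End_\mathcal{C}(T)$-modules, this cover is induced by a morphism $h\colon T_0\to X$ with $T_0\in\cal{T}$, and $h$ is by construction a minimal right $\cal{T}$-approximation.

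Next, I would verify that $h$ is a deflation. Choose a deflation $\pi\colon P\twoheadrightarrow X$ with $P$ projective-injective, which exists because $\mathcal{C}$ has enough projectives. Any indecomposable projective-injective $P'$ satisfies $\Ext^1_\mathcal{C}(T,P')=0$ because $P'$ is injective, so by the cluster-tilting hypothesis $P'\in\add(T)$; hence $P\in\cal{T}$. The approximation property then provides $g\colon P\to T_0$ with $\pi=h\circ g$. Because $\mathcal{C}$ is Krull--Schmidt, every idempotent splits, in particular $\mathcal{C}$ is weakly idempotent complete, and in such an exact category a factorisation $\pi=hg$ of a deflation $\pi$ through $h$ forces $h$ itself to be a deflation.

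Letting $T_1$ denote the kernel of $h$ yields the desired conflation. Applying $\mathcal{C}(T,-)$ produces a long exact sequence
\[
\mathcal{C}(T,T_0)\xrightarrow{h_*}\mathcal{C}(T,X)\to \Ext^1_\mathcal{C}(T,T_1)\to \Ext^1_\mathcal{C}(T,T_0).
\]
The map $h_*$ is surjective since $h$ is a right $\cal{T}$-approximation, and the last term vanishes by the rigidity of $T$. Hence $\Ext^1_\mathcal{C}(T,T_1)=0$, and the cluster-tilting property forces $T_1\in\add(T)=\cal{T}$. The main obstacle is the deflation step: it rests on the (often tacit) fact that Krull--Schmidt categories are weakly idempotent complete, which is what legitimises passing from the factorisation $\pi=hg$ to the conclusion that $h$ itself is a deflation. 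The remaining steps are bookkeeping with the cluster-tilting definition and the rigidity of $T$.
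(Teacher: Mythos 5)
Your proof is correct in its broad strategy and correctly identifies the genuine subtlety in passing from the triangulated to the exact/Frobenius setting: one must show that $h$ is a deflation before one can extract a conflation from its kernel. Your argument for this step — factoring a projective deflation $\pi\colon P\twoheadrightarrow X$ through $h$ (using that projective-injectives lie in $\add(T)$, hence the approximation property applies) and invoking weak idempotent completeness — is exactly right, and the final step via the long exact sequence, rigidity, and the cluster-tilting hypothesis is also correct.

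There is, however, a gap in the opening step that the paper explicitly flags just before the statement. You take $T_0$ to be the projective cover of $\mathcal{C}(T,X)$ over $\End_\mathcal{C}(T)$ and appeal to the Yoneda equivalence $\add(T)\simeq\proj(\End_\mathcal{C}(T))$ to place $T_0$ in $\cal{T}$. But this equivalence identifies $\add(T)$ with \emph{finitely generated} projective modules, whereas in the $\Hom$-infinite Frobenius setting — the very setting the paper is designed to handle — $\mathcal{C}(T,X)$ is a priori a large pseudocompact module, and nothing you say guarantees its projective cover is finitely generated. The paper's remark ``right $\mathcal{T}$-approximations exist \ldots because $\mathcal{C}$ has enough projectives and $\underline{\mathcal{C}}$ is $\Hom$-finite'' is precisely the missing ingredient: choose a minimal right $\underline{\mathcal{T}}$-approximation $\underline{T}_0'\to\underline{X}$ in the $\Hom$-finite stable category (so $T_0'$ is a finite sum of indecomposables from $\add(T)$), lift it to $\mathcal{C}$, and combine it with a projective deflation $P\twoheadrightarrow X$; then $T_0'\oplus P$ maps onto $X$ by a right $\cal{T}$-approximation, which shows $\mathcal{C}(T,X)$ is a finitely generated $\End_\mathcal{C}(T)$-module. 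Only then does semiperfectness give a finitely generated projective cover, and your argument goes through. Once this is inserted the proof is complete and matches the approach the paper intends (adapted from Keller--Reiten's Proposition~4, which handles the triangulated case where the ``deflation'' step is automatic).
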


We recall that the (split) Grothendieck group $K_0(\cal{A})$ of an additive category $\cal{A}$ is the quotient of the free group on the isomorphism classes $[A]$ of objects $A$ of $\cal{A}$ by the subgroup generated by the elements of the form 
\[ [A_1\oplus A_2]-[A_1]-[A_2]. \]

Let $K_0(\cal{T})$ (respectively, $K_0(\underline{\cal{T}}$)) denote the Grothendieck group of the additive category $\cal{T}$ (respectively, $\underline{\cal{T}}$). It is isomorphic to the free abelian group on the isomorphism classes of the indecomposable objects of $\cal{T}$ (respectively, $\underline{\cal{T}}$). For an object $X \in \cal{C}$ admitting a conflation
\[ 
\begin{tikzcd}
	{T_1} & T_0 & {X}
	\arrow[tail, from=1-1, to=1-2]
	\arrow[two heads, from=1-2, to=1-3]
\end{tikzcd}
\] 
with $T_1, T_0 \in \cal{T}$, we put
\[ 
{\rm{ind}}_{\cal{T}}(X) = [T_0] - [T_1] \in K_0(\cal{T}).
\]

Let us show that this is well-defined. Let $B = {\rm{End}}_{\cal{C}}(T)$. Let ${\rm{Mod}}(B)$ denote the category of all right $B$-modules. Let ${\rm{per}}(B)$ denote the perfect derived category of $B$, which is the full subcategory of the unbounded derived category of ${\rm{Mod}} (B)$ whose objects are quasi-isomorphic to
complexes of finitely generated projective $B$-modules. The functor 
\[ {\cal{C}}(T,?): {\cal{C}} \rightarrow {\rm{Mod}}(B) \]
induces an equivalence from ${\rm{add}}(T)$ to the full subcategory ${\rm{proj}}(B)$ of finitely generated projective $B$-modules (see \cite[Proposition 2.3]{krause2015krull}). We apply ${\cal{C}}(T,?)$ to (\ref{eqn:1}) and obtain the following 
\begin{equation}
\label{eqn:2}
0 \longrightarrow {\cal{C}}(T,T_1) \longrightarrow {\cal{C}}(T, T_0) \longrightarrow {\cal{C}}(T,X) \longrightarrow {\rm{Ext}}_{\cal{C}}^1(T,T_1). 
\end{equation}

Since we have ${\rm{Ext}}^1_{\cal{C}}(T,T_1) = 0$, this sequence gives a projective resolution of the $B$-module ${\cal{C}}(T,X)$ and ${\cal{C}}(T,X)$ belongs to ${\rm{per}}(B)$. We have canonical isomorphisms 
\[ K_0({\rm{per}}(B)) \simeq K_0({\rm{proj}}(B)) \simeq K_0({\cal{T}}). \]

Under these, the class of ${\cal{C}}(T,X)$ is mapped to the class of ${\rm{ind}}_{\cal{T}}(X)$, which is therefore independent of the choice of the conflation (\ref{eqn:1}).

\begin{remark} \label{rk:no projective summands}
We claim that we can choose the conflation (\ref{eqn:1}) such that $T_1$ does not have non-zero projective summands. Indeed, since ${\cal{C}}$ is Krull-Schmidt, we can write $T_1 = P\oplus T'_1 $ where $P$ is projective and $T'_1$ has no non zero projective summands. Then the composition 
\[ P \xhookrightarrow{} T_1 \rightarrow T_0  \]
is an inflation. Since $P$ is also injective it is the inclusion of a direct factor. So we get a split exact sequence of conflations

\[\begin{tikzcd}
	P & P & 0 \\
	{T_1} & {T_0} & X \\
	{T'_1} & {T'_0} & X
	\arrow[Rightarrow, no head, from=1-1, to=1-2]
	\arrow[two heads, from=1-2, to=1-3]
	\arrow[from=1-3, to=2-3]
	\arrow[Rightarrow, no head, from=2-3, to=3-3]
	\arrow[two heads, from=2-2, to=2-3]
	\arrow[tail, from=2-1, to=2-2]
	\arrow[from=2-1, to=3-1]
	\arrow[tail, from=3-1, to=3-2]
	\arrow[from=3-2, to=3-3]
	\arrow[from=2-2, to=3-2]
	\arrow[hook', from=1-1, to=2-1]
	\arrow[tail, from=1-2, to=2-2]
\end{tikzcd} ,
\]
which proves the claim.
\end{remark}

\begin{proposition} \label{prop: rigid factor in common}
If $X \in {\cal{C}}$ is rigid, the objects $T_0$ and $T_1$ do not have an indecomposable direct factor in common.
\end{proposition}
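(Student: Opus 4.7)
The plan is to argue by contradiction. Suppose that $S$ is an indecomposable summand appearing in both $T_0$ and $T_1$. By Remark~\ref{rk:no projective summands} we may assume $T_1$ has no non-zero projective summand, so $S$ is non-projective. For clarity I describe the argument in the case where $S$ has multiplicity one in each of $T_0$ and $T_1$; the general case involves only matrix bookkeeping. Write $T_0 = S \oplus T_0'$ and $T_1 = S \oplus T_1'$ with $T_0'$ and $T_1'$ containing no further copy of $S$, and record the block decomposition
\[
u = \begin{pmatrix} a & b \\ c & d \end{pmatrix} : S \oplus T_1' \longrightarrow S \oplus T_0'.
\]
The entire plan is to prove that $v \circ i_{T_0'} : T_0' \to X$ is itself a right $\mathcal{T}$-approximation of $X$. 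Since $T_0'$ is a proper direct summand of $T_0$, this will contradict the projective cover property built into the minimality of $v$. The argument thus reduces to showing that $v \circ i_0 : S \to X$ factors through $v \circ i_{T_0'}$.

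The first step extracts information from the minimality. Because the projective cover $\mathcal{C}(T, T_0) \to \mathcal{C}(T, X)$ has its kernel contained in the radical, and because this kernel equals the image of $u_\ast : \mathcal{C}(T, T_1) \to \mathcal{C}(T, T_0)$ thanks to $\mathrm{Ext}^1_{\mathcal{C}}(T, T_1) = 0$, the morphism $u$ is radical in $\mathrm{add}(T)$ under the equivalence $\mathrm{add}(T) \simeq \mathrm{proj}(B)$. In particular $a$ lies in $\mathrm{rad}\,\mathrm{End}_{\mathcal{C}}(S)$. Moreover, since $T_0'$ contains no summand isomorphic to $S$, every morphism $S \to T_0'$ or $T_0' \to S$ is automatically radical in $\mathrm{add}(T)$, so any endomorphism of $S$ factoring through $T_0'$ lies in $\mathrm{rad}\,\mathrm{End}_{\mathcal{C}}(S)$ as well. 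Since $\mathrm{End}_{\mathcal{C}}(S)$ is local, $\mathbf{1}_S - r$ is invertible for every $r$ in its radical.

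The second step uses the rigidity of $X$. Applying $\mathcal{C}(-, X)$ to the conflation and using $\mathrm{Ext}^1_{\mathcal{C}}(X, X) = 0$, the map $u^\ast : \mathcal{C}(T_0, X) \to \mathcal{C}(T_1, X)$ is surjective. Lifting $v \circ i_0 \circ p_1$ (with $p_1 : T_1 \twoheadrightarrow S$ the canonical projection) through $u^\ast$ yields $\tilde{v} = (\tilde{v}_S, \tilde{v}_{T_0'}) : T_0 \to X$ with $\tilde{v} \circ u = v \circ i_0 \circ p_1$. Restricting both sides along $i_1 : S \hookrightarrow T_1$ and using $p_1 \circ i_1 = \mathbf{1}_S$ gives
\[
v \circ i_0 = \tilde{v}_S \circ a + \tilde{v}_{T_0'} \circ c.
\]
Next, apply the approximation property of $v$ again to write $\tilde{v}_S = (v \circ i_0) \circ \varphi_S + (v \circ i_{T_0'}) \circ \varphi_{T_0'}$ and $\tilde{v}_{T_0'} = (v \circ i_0) \circ \psi_S + (v \circ i_{T_0'}) \circ \psi_{T_0'}$. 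Substituting these expressions and collecting the terms precomposed with $v \circ i_0$ on the left-hand side produces
\[
(v \circ i_0) \circ (\mathbf{1}_S - \varphi_S a - \psi_S c) = (v \circ i_{T_0'}) \circ (\varphi_{T_0'} a + \psi_{T_0'} c).
\]
By the first step, $r := \varphi_S a + \psi_S c$ lies in $\mathrm{rad}\,\mathrm{End}_{\mathcal{C}}(S)$: the first summand because $a$ is radical, the second because $\psi_S c$ factors through $T_0'$. Hence $\mathbf{1}_S - r$ is invertible, and $v \circ i_0 = (v \circ i_{T_0'}) \circ \sigma$ for $\sigma := (\varphi_{T_0'} a + \psi_{T_0'} c)(\mathbf{1}_S - r)^{-1}$, giving the desired factorization.

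With this factorization, any $f : T' \to X$ with $T' \in \mathcal{T}$ can be written via the approximation property of $v$ as $f = (v \circ i_0) \circ h_S + (v \circ i_{T_0'}) \circ h_{T_0'}$ and then recast as $(v \circ i_{T_0'}) \circ (\sigma \circ h_S + h_{T_0'})$. So $v \circ i_{T_0'}$ is a right $\mathcal{T}$-approximation, contradicting the minimality of $v$. I expect the most delicate point to be verifying in the second step that the correction $r$ is genuinely radical: this crucially uses the choice of $T_0'$ as the complement of all $S$-summands in $T_0$, ensuring that the stray composition $\psi_S c : S \to T_0' \to S$ lies in the radical ideal of $\mathrm{add}(T)$.
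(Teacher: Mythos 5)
Your proof is correct, and it takes a genuinely different route from the paper's. The paper proves this by reduction to the stable category: after Remark~\ref{rk:no projective summands} ensures $T_1$ has no projective-injective summands, the conflation becomes a triangle $T_1 \to T_0 \to X \to \Sigma T_1$ in $\underline{\mathcal{C}}$ with $\underline{h}$ a minimal right $\underline{\mathcal{T}}$-approximation; the result of Dehy--Keller \cite{dehy2008combinatorics} for $2$-Calabi--Yau triangulated categories then says $T_0$ and $T_1$ share no non-projective indecomposable, and since $T_1$ has no projectives, they share no indecomposable at all. You instead give a self-contained argument at the level of the Frobenius category, exploiting (i) the exact sequence $0 \to \mathcal{C}(T,T_1) \to \mathcal{C}(T,T_0) \to \mathcal{C}(T,X) \to 0$ to see that minimality forces the inflation $u$ to be radical, (ii) the exact sequence $\mathcal{C}(T_0,X) \to \mathcal{C}(T_1,X) \to \mathrm{Ext}^1_{\mathcal{C}}(X,X) = 0$ coming from rigidity to produce a lift $\tilde{v}$, and (iii) the locality of $\mathrm{End}_{\mathcal{C}}(S)$ to invert $\mathbf{1}_S - r$ and conclude that $T_0'$ already gives a right approximation, violating minimality. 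Your argument is essentially the internal Frobenius-category analogue of the Dehy--Keller proof you are being spared by the citation, and it has the advantage of not invoking the triangulated structure or the $2$-CY condition beyond what rigidity and minimality supply; the paper's version is shorter because it piggybacks on the stable result and the preliminary reduction of Remark~\ref{rk:no projective summands}. The one point you should make explicit, which you only hint at in passing, is why the factorization of $v \circ i_0$ through $v \circ i_{T_0'}$ actually contradicts minimality: it implies that $\mathcal{C}(T, i_{T_0'})$ maps $\mathcal{C}(T,T_0')$ onto a proper direct summand of $\mathcal{C}(T,T_0)$ that still surjects onto $\mathcal{C}(T,X)$, contradicting that a projective cover is an essential epimorphism. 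That step closes the loop cleanly.
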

\begin{proof}

We recall from \cite{happel1987derived} that the conflation 
\[
\begin{tikzcd}
	{T_1} & {T_0} & X
	\arrow["g", tail, from=1-1, to=1-2]
	\arrow["h", two heads, from=1-2, to=1-3]
\end{tikzcd}
\]
in ${\cal{C}}$ gives rise to a triangle $T_1 \overset{\underline{g}}\longrightarrow T_0 \overset{\underline{h}}\longrightarrow X \longrightarrow \Sigma T_0$ in $\underline{\cal{C}}$ where $T_0,T_1 \in \underline{\cal{T}}$. We can also choose $T_1$ such that $T_1$ does not have a projective-injective summand in ${\cal{C}}$. So if $U$ appears as a direct summand of $T_0$ and $T_1$ in ${\cal{C}}$, $U$ cannot have a non zero projective-injective summand.  Since $X$ is rigid and $\underline{h}$ is a minimal right $\underline{\cal{T}}$- approximation we know from \cite{dehy2008combinatorics} that $T_0$ and $T_1$ cannot have a common factor in $\underline{\cal{T}}$. So $U$ cannot have an indecomposable non-projective summand either, proving our claim.
\end{proof}

\begin{theorem}
\label{thm:index determines rigid object}
Two rigid objects of ${\cal{C}}$ are isomorphic if and only if their indices are equal.
\end{theorem}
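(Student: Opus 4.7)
The plan is as follows. The "only if" direction is immediate, since isomorphic objects admit isomorphic conflations of the form (\ref{eqn:1}). For the "if" direction, suppose $X,Y \in \cal{C}$ are rigid with $\ind_{\cal{T}}(X)=\ind_{\cal{T}}(Y)$. Using Proposition~\ref{rightapproximationprop} together with Remark~\ref{rk:no projective summands}, I would choose conflations
\[
T_1 \rightarrowtail T_0 \twoheadrightarrow X
\quad\text{and}\quad
T_1' \rightarrowtail T_0' \twoheadrightarrow Y
\]
with $T_0\to X$ and $T_0'\to Y$ minimal right $\cal{T}$-approximations and with $T_1, T_1'$ having no projective-injective summand.

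By Proposition~\ref{prop: rigid factor in common}, neither $(T_0,T_1)$ nor $(T_0',T_1')$ has a common indecomposable direct factor. Rewriting the hypothesis $[T_0]-[T_1]=[T_0']-[T_1']$ in $K_0(\cal{T})$ as $[T_0]+[T_1']=[T_0']+[T_1]$ and using that $K_0(\cal{T})$ is the free abelian group on iso classes of indecomposables of $\cal{T}$, one compares multiplicities of each indecomposable; the no-common-summand condition then forces $T_0\cong T_0'$ and $T_1\cong T_1'$ in $\cal{T}$.

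Next, I would pass to the stable category $\underline{\cal{C}}$, which by hypothesis is Hom-finite $2$-CY with cluster-tilting object $\underline{T}$. The two conflations descend to triangles $T_1\to T_0\to X\to \Sigma T_1$ and $T_1\to T_0\to Y\to \Sigma T_1$ in $\underline{\cal{C}}$. Since $\Ext^1_{\cal{C}}(-,-)\cong\underline{\cal{C}}(-,\Sigma\,-)$, the objects $X$ and $Y$ remain rigid in $\underline{\cal{C}}$ and their stable indices in $K_0(\underline{\cal{T}})$ agree. Dehy--Keller's theorem \cite{dehy2008combinatorics}, asserting that a rigid object in a Hom-finite $2$-CY triangulated category is determined up to isomorphism by its index, then yields $X\cong Y$ in $\underline{\cal{C}}$.

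Finally, to lift the isomorphism to $\cal{C}$, I would use that $X\cong Y$ in $\underline{\cal{C}}$ is equivalent to the existence of projective-injective objects $P,P' \in \cal{C}$ with $X\oplus P\cong Y\oplus P'$ in $\cal{C}$. Applying $\ind_{\cal{T}}$ to this isomorphism gives $\ind(X)+[P]=\ind(Y)+[P']$ in $K_0(\cal{T})$, and combined with the hypothesis this forces $[P]=[P']$, hence $P\cong P'$ in $\cal{T}$ by freeness of $K_0(\cal{T})$. Krull--Schmidt cancellation in $\cal{C}$ then produces the desired isomorphism $X\cong Y$. The main obstacle is the step inside $\underline{\cal{C}}$: even with $T_0, T_1$ fixed up to isomorphism, the two morphisms $T_1\to T_0$ producing $X$ and $Y$ could a priori differ by more than automorphisms of the endpoints, and it is precisely the conjunction of rigidity and minimality that forces the cones to be isomorphic, which is exactly the content of the Dehy--Keller result invoked above.
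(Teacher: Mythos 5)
Your argument is correct and relies on the same key ingredient as the paper, namely Dehy--Keller's Theorem~2.3 that the stable index determines a rigid object in a $\Hom$-finite $2$-Calabi--Yau triangulated category. The paper's proof decomposes each rigid object as $X_i = X_i' \oplus P_i$ with $P_i$ projective-injective and $X_i'$ projective-free, asserting first that $P_1\cong P_2$ by ``comparing the components of the indices associated with indecomposable projectives'' and then that $X_1'\cong X_2'$ via Dehy--Keller; you instead first derive $X\cong Y$ in $\underline{\mathcal{C}}$ and then use the index equality together with Krull--Schmidt cancellation to recover the projective summands. The two routes reach the same conclusion by the same essential mechanism, but your ordering is arguably tighter: since the minimal $\mathcal{T}$-cover of a projective-free rigid object can itself contain projective summands, extracting $P_1\cong P_2$ from the projective components of $\ind_{\mathcal{T}}(X_i)$ really presupposes that $\ind_{\mathcal{T}}(X_1')=\ind_{\mathcal{T}}(X_2')$, which is exactly what you obtain from Dehy--Keller before making the comparison. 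One small remark: your intermediate identification $T_0\cong T_0'$, $T_1\cong T_1'$ via the no-common-summand argument is correct but not actually used afterwards, since agreement of the stable indices already follows directly from applying the projection $K_0(\mathcal{T})\to K_0(\underline{\mathcal{T}})$ to the hypothesis.
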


\begin{proof} Let $X_1$ and $X_2$ be two rigid objects of ${\cal{C}}$. 
We decompose them as $X_i= X_i' \oplus P_i$, $1\leq i \leq 2$,
where $P_i$ is projective-injective and $X'_i$ does not have non zero projective summands. 
\begin{magenta} Clearly, the objects $X'_1$ and $X'_2$ have the same index
with respect to $\ul{\ct}$ in the stable category $\ul{\cc}$. Since the $X'_i$ are rigid,
it follows from Theorem~2.3 of
\cite{dehy2008combinatorics} that $X'_1$ is isomorphic to $X'_2$ in $\ul{\cc}$ and hence in $\cc$.
Since $X_1 = X'_1\oplus P_1$ and $X_2=X_2'\oplus P_2$ have the same index
and $X_1'$ is isomorphic to $X'_2$, it follows that $P_1$ and $P_2$ have the
same index. Since $\ct$ is a Krull--Schmidt category, we obtain that
$P_1$ and $P_2$ are isomorphic. As a consequence, the objects
$X_1$ and $X_2$ are isomorphic.
\end{magenta}
\end{proof}

Let $\ct=\add(T)$ as above. Let $1 \leq k \leq r$ and assume that $T'$ is obtained from $T$
by mutation at the indecomposable non projective summand $T_k$.
We recall that the exchange conflations are given by
\[
\begin{tikzcd}
	{T_k^*} & E & {T_k}
	\arrow[tail, from=1-1, to=1-2]
	\arrow[two heads, from=1-2, to=1-3]
\end{tikzcd} \quad\mbox{and}\quad
\begin{tikzcd}
	{T_k} & E' & {T_k^*.}
	\arrow[tail, from=1-1, to=1-2]
	\arrow[two heads, from=1-2, to=1-3]
\end{tikzcd}
\]
\eat {Let \rm{indec}(\cal{X}) denotes the set of isomorphism classes of indecomposables appearing in a cluster tilting object $\cal{X}$. Then,
\[ \rm{indec}(\cal{T'}) = (\rm{indec}({\cal{T}}) \backslash {T_k}) \bigcup {T_k^*}. \]}

Following \cite{dehy2008combinatorics}, we define two linear maps 
\[ \phi_{+} : K_0({\cal{T}}) \rightarrow K_0({\cal{T'}}) \text{  and  } \phi_{-}:K_0({\cal{T}}) \rightarrow K_0({\cal{T'}}). \]
as follows,

\[\phi_{+}([T_k]) =  [E] - [T_k^*] ; \]
\[\phi_{-}([T_k])=  [E'] - [T^*_k]  ;\]
\[ \phi_{\pm}([T_j]) = [T_j]  \text{   for } j \neq k. \]

Let $X$ be an object of ${\cal{C}}$. For an indecomposable summand $S$ of $T$, 
we denote by $[{\rm{ind}}_{{\cal{T}}}(X):S]$ the coefficient of $S$ in the decomposition of 
$\rm{ind}_{{\cal{T}}}(X)$ with respect to the basis given by the indecomposable objects of ${\cal{T}}$. 
We now establish a generalisation of \cite[Theorem 3]{dehy2008combinatorics} for a stably $2$-CY Frobenius
category $\cal{C}$. The proof goes along the same lines as well. 

\begin{theorem}\label{indtheorem} Let $X$ be a rigid object of $\cal{C}$. \begin{magenta}Let ${\cal{T'}}$ be obtained from ${\cal{T}}$ by mutating at the vertex $k$ as above.\end{magenta} We have
\[ {\rm{ind}}_{\cal{T'}}(X) = \begin{cases} \phi_{+}({\rm{ind}}_{\cal{T}}(X)) \text{  if  } [{\rm{ind}}_{\cal{T}}(X):T_k] \geq 0 ;\\
\phi_{-}({\rm{ind}}_{\cal{T}}(X)) \text{  if  } [{\rm{ind}}_{\cal{T}}(X):T_k] \leq 0. 
\end{cases}\] 
\end{theorem}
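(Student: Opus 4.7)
The plan is to adapt the strategy of Dehy--Keller \cite[Theorem 3]{dehy2008combinatorics} to the Frobenius setting by explicitly constructing a conflation over $\mathcal{T}'$ out of a given conflation over $\mathcal{T}$. By Proposition~\ref{rightapproximationprop} and Remark~\ref{rk:no projective summands}, I would fix a conflation
\[
T_1 \rightarrowtail T_0 \twoheadrightarrow X
\]
with $T_0, T_1 \in \mathcal{T}$ and $T_1$ having no projective-injective summand. By Proposition~\ref{prop: rigid factor in common}, since $X$ is rigid, $T_0$ and $T_1$ share no indecomposable summand, so $T_k$ appears in at most one of them; this dichotomy dictates which of $\phi_+$ or $\phi_-$ to apply.

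In the case $[\ind_{\mathcal{T}}(X):T_k]=a\geq 0$, write $T_0=T_k^a\oplus S$ with $T_k\notin \add(S)$; then $T_k\notin \add(T_1)$ either. I would combine the original conflation with $a$ copies of the exchange conflation $T_k^*\rightarrowtail E\twoheadrightarrow T_k$ via the $3\times 3$-lemma in the Frobenius exact category $\mathcal{C}$ to produce a new conflation
\[
K \rightarrowtail E^a\oplus S \twoheadrightarrow X
\]
together with an auxiliary conflation $(T_k^*)^a \rightarrowtail K \twoheadrightarrow T_1$. Since $T_1\in\add(T/T_k)\subseteq \add(T')$ and $T_k^*\in\add(T')$ with $T'$ cluster-tilting, we get $\Ext^1(T_1,(T_k^*)^a)=0$, so this auxiliary conflation splits and $K\cong (T_k^*)^a\oplus T_1$. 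Because the endoquiver of $T$ has no loops, $T_k\notin\add(E)$, hence $E\in \add(T')$; thus the whole conflation $(T_k^*)^a\oplus T_1\rightarrowtail E^a\oplus S\twoheadrightarrow X$ lies in $\mathcal{T}'$ and computes $\ind_{\mathcal{T}'}(X)=a([E]-[T_k^*])+[S]-[T_1]=\phi_+(\ind_{\mathcal{T}}(X))$.

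The case $[\ind_{\mathcal{T}}(X):T_k]=-a\leq 0$ is handled symmetrically: write $T_1=T_k^a\oplus S'$ (so that $T_0\in\add(T/T_k)$) and form the pushout of $T_1\rightarrowtail T_0$ along the inflation $T_1\rightarrowtail (E')^a\oplus S'$ provided by the other exchange conflation $T_k^a\rightarrowtail (E')^a\twoheadrightarrow (T_k^*)^a$. This yields conflations $T_0\rightarrowtail Y\twoheadrightarrow (T_k^*)^a$ and $(E')^a\oplus S'\rightarrowtail Y\twoheadrightarrow X$. The vanishing $\Ext^1((T_k^*)^a,T_0)=0$, again from the cluster-tilting property of $T'$, splits the first, so $Y\cong T_0\oplus (T_k^*)^a\in \mathcal{T}'$, and the second computes $\ind_{\mathcal{T}'}(X)=\phi_-(\ind_{\mathcal{T}}(X))$.

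The main technical obstacle is securing the splittings of the two auxiliary conflations; both reduce to the $\Ext^1$-vanishings above, which in turn rest on the cluster-tilting property of $T'$ combined with Proposition~\ref{prop: rigid factor in common} and the absence of loops and $2$-cycles in the endoquivers. Once these splittings are in place, the $3\times 3$-lemma and the pushout constructions go through in any Frobenius exact category, and the identification with $\phi_\pm$ reduces to a bookkeeping computation in $K_0(\mathcal{T}')$.
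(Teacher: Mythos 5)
Your proposal is correct and follows essentially the same route as the paper's proof: both adapt Dehy--Keller's argument to the Frobenius setting by choosing a minimal conflation (so that Proposition~\ref{prop: rigid factor in common} controls where $T_k$ appears), splicing in the appropriate exchange conflations, and using the $\Ext^1$-vanishings coming from the cluster-tilting property of $T'$ to split the auxiliary conflation, after which the $K_0$-computation is immediate. The only cosmetic difference is that you phrase the diagram manipulations in terms of the $3\times 3$-lemma and a pushout, whereas the paper writes out the commutative diagrams of conflations explicitly; and you are slightly more explicit than the paper in stating up front that the minimality of the conflation is what makes the dichotomy on $[\ind_{\mathcal{T}}(X):T_k]$ exhaustive.
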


\begin{proof}
Let us consider a conflation 
\[
\begin{tikzcd}
	{U_1} & {U_0} & X
	\arrow["g", tail, from=1-1, to=1-2]
	\arrow["h", two heads, from=1-2, to=1-3]
\end{tikzcd}
\]
with $U_i \in \cal{T}$. If $T_k$ is neither a direct factor of $U_0$ nor a direct factor of $U_1$, 
then $U_i \in {\cal{T}}\cap{\cal{T'}}$ for $i = 0,1$. In this case, we have 
\[ {\rm{ind}}_{{\cal{T}}}(X) = {\rm{ind}}_{{\cal{T'}}}(X).  \]

We now consider the case where $[\mathrm{ind}_{{\cal{T}}}(X):T_k] = i$ for a positive integer $i$.
 Then $T_k$ occurs $i$ times in $U_0$ and does not occur in $U_1$. Let $U_0 = U'_0 \oplus T_k^i$, where 
 $U'_0$ does not have $T_k$ as a direct factor. The conflation 
\[ 
 \begin{tikzcd}
	{T_k^*} & E & {T_k}
	\arrow[tail, from=1-1, to=1-2]
	\arrow[two heads, from=1-2, to=1-3]
\end{tikzcd} 
\]
gives rise to the following composition of deflations
\[ U_0'\oplus E^i  \longrightarrow U_0' \oplus T_k^i  \longrightarrow X, \]
which in turn gives rise to the diagram
\[\begin{tikzcd}
	{T_k^{*i}} & {U_1'} & {U_1} \\
	{T_k^{*i}} & {U_0'\oplus E^i} & {U_0'\oplus T_k^i = U_0} \\
	& X & X
	\arrow[tail, from=1-1, to=1-2]
	\arrow[two heads, from=1-2, to=1-3]
	\arrow[Rightarrow, no head, from=1-1, to=2-1]
	\arrow[tail, from=1-2, to=2-2]
	\arrow[two heads, from=2-2, to=3-2]
	\arrow[tail, from=2-1, to=2-2]
	\arrow[two heads, from=2-2, to=2-3]
	\arrow[tail, from=1-3, to=2-3]
	\arrow[two heads, from=2-3, to=3-3]
	\arrow[Rightarrow, no head, from=3-2, to=3-3]
\end{tikzcd}\]
whose first two rows and last two columns are conflations. Since $T_k$ is not a summand of $U_1$, we have
 \[{\rm{Ext}}_{\mathcal{C}}^1(U_1, T_k^*) = 0.\] Therefore, the first row splits and we have 
\[ U_1' = T_k^{*i} \oplus U_1. \]
Thus, from the second column rewritten as 
\[ 
 \begin{tikzcd}
	{T_k^{*i}\oplus U_1} & U'_0 \oplus E^i  & {X}
	\arrow[tail, from=1-1, to=1-2]
	\arrow[two heads, from=1-2, to=1-3]
\end{tikzcd} 
\]
we get
\[ {\rm{ind}}_{{\cal{T'}}}(X) = [U_0'] + i([E]-[T_k^*])-[U_1] = \phi_{+}(\rm{ind}_{{\cal{T}}}(X)).\]

Let us now consider the case where $[{\rm{ind}}_{{\cal{T}}}(X):T_k] = -i$, where $i$ is \begin{magenta}a\end{magenta} positive integer. 
Then $T_k$ occurs in $U_1$ with multiplicity $i$ and does not occur in 
$U_0$. Let $U_1 = U_1' \oplus T_k^i$, where $U_1'$ does not have $T_k$ as a
direct factor. As in the previous case, the conflation 
\[ 
 \begin{tikzcd}
	{T_k} & E' & {T_k^*}
	\arrow[tail, from=1-1, to=1-2]
	\arrow[two heads, from=1-2, to=1-3]
\end{tikzcd} 
\]
gives rise to the following inflation
\[ 
U_1' \oplus T_k^i \rightarrow U_1' \oplus E'^i.
\]  
We deduce the diagram
\[\begin{tikzcd}
	{U_1'\oplus T_k^i} & {U_0} & X \\
	{U_1'\oplus E'^i} & {U_0'} & X \\
	T_k^{*i} & {T_k^{*i}}
	\arrow[tail, from=1-1, to=1-2]
	\arrow[two heads, from=1-2, to=1-3]
	\arrow[tail, from=1-1, to=2-1]
	\arrow[tail, from=1-2, to=2-2]
	\arrow[two heads, from=2-1, to=3-1]
	\arrow[two heads, from=2-2, to=3-2]
	\arrow[tail, from=2-1, to=2-2]
	\arrow[two heads, from=2-2, to=2-3]
	\arrow[Rightarrow, no head, from=1-3, to=2-3]
	\arrow[Rightarrow, no head, from=3-1, to=3-2]
\end{tikzcd}\]
whose first two rows and columns are conflations.
Since $T_k^*$ does not appear as a summand of $U_0$ we have 
\[{\rm{Ext}}^1_{\mathcal{C}}(T_k^*, U_0) = 0.\] 
Therefore, the object $U'_0$ decomposes as
\[U'_0 = T_k^{*i} \oplus U_0. \]
Therefore, we can rewrite the second row as the conflation
\[ 
 \begin{tikzcd}
	{U'_1 \oplus E'^i } & {U_0 \oplus T_k^{*i} }  & {X}
	\arrow[tail, from=1-1, to=1-2]
	\arrow[two heads, from=1-2, to=1-3]
\end{tikzcd} 
\]
which yields
\[ {\rm{ind}}_{{\cal{T}'}}(X) = [U_0] - [U_1'] + i([T_k^*] - [E_k']) = \phi_{-}(\rm{ind}_{{\cal{T}}}(X)). \]

\end{proof}

\subsection{$g$-vectors as indices} \label{ss:g-vectors as indices}
Let $\mathcal{C}$ be a stably $2$-CY Frobenius category.  Let 
\[
\mathcal{T}^0  = {\rm{add}} (T_1^0, T_2^0, \ldots, T_m^0)
\]
be a cluster tilting subcategory of $\mathcal{C}$ such that $T_1^0, T_2^0, \ldots, T_r^0$ are non-projective indecomposable objects and  $T_{r+1}^0, T_{r+2}^0, \ldots, T_m^0$ are projective-injective indecomposable objects.

Let $\mathbb{T}_r$ be the $r$-regular tree with
initial vertex $t_0$ as in section~\ref{ss:from-ice-quivers}. With each vertex $t$ of $\mathbb{T}_r$, we
associate a cluster-tilting subcategory $\ct(t)$ with indecomposable objects $T_j(t)$, $1 \leq j\leq m$,
such that 
\begin{itemize}
\item[a)] We have $\ct(t_0)=\ct^0$ and $T_j(t_0)=T_j^0$ for $1 \leq j \leq m$ and
\item[b)] If $t$ and $t'$ are linked by an edge labeled $i$, then $\ct(t')$ obtained from
$\ct(t)$ by mutation at the indecomposable $T_i(t)$ so that $T_j(t')=T_j(t)$ for $j\neq i$
and $T_i(t')=T_i(t)^*$.
\end{itemize}
Notice that for each vertex $t$ of $\mathbb{T}_r$, the direct sum
\[
T(t) = T_1(t) \oplus \cdots \oplus T_m(t)
\]
is a basic cluster-tilting object of $\cc$ and that we have $T_j(t)=T_j(t_0)$ for all $j>r$ and all 
vertices $t$ of $\mathbb{T}_r$, cf. section~\ref{ss:extended g-vectors}. 
We say a cluster tilting object is {\em reachable} from $T$ if it is isomorphic
to $T(t)$ for some $t$.

\begin{theorem} \label{thm:index vs g-vector}
Let $T(t_2)$ be a cluster-tilting object reachable from $T = T(t_1)$ and $\ct=\add(T)$. Then we have
 \[ 
 \ind_{\ct}(T_j(t_2)) = \sum_i g_{ij}^{t_1}(t_2)[T_i]. 
 \] 
\end{theorem}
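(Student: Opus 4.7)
The plan is to prove the theorem by induction on the distance $d = d(t_1, t_2)$ in the regular tree $\mathbb{T}_r$.

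The base case $d = 0$ is immediate: $t_1 = t_2$ forces $T_j(t_2) = T_j$, so $\ind_\ct(T_j) = [T_j] = \sum_i \delta_{ij}[T_i]$, matching $G(t_1, t_1) = I_m$. For the inductive step with $d > 0$, I will pick a vertex $t_1'$ adjacent to $t_1$ via an edge labelled $k$ with $d(t_1', t_2) = d - 1$. The inductive hypothesis gives
\[
\ind_{\ct(t_1')}(T_j(t_2)) = \sum_i g_{ij}^{t_1'}(t_2)\, [T_i(t_1')]\quad\text{for all }j.
\]
Since $T(t_1)$ is obtained from $T(t_1')$ by mutation at $T_k(t_1')$, with $T_k(t_1) = T_k(t_1')^*$, I will apply Theorem~\ref{indtheorem} to each rigid $T_j(t_2)$ to obtain
\[
\ind_{\ct(t_1)}(T_j(t_2)) = \phi_\eta(\ind_{\ct(t_1')}(T_j(t_2))),
\]
where $\eta = +1$ if the coefficient of $[T_k(t_1')]$ is $\geq 0$ and $\eta = -1$ otherwise. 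By the inductive hypothesis this coefficient is $g_{kj}^{t_1'}(t_2)$, so sign-coherence of the $k$-th row of $G(t_1', t_2)$ ensures that $\eta$ does not depend on $j$.

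I then compare this with the matrix mutation formula \eqref{gvectorssign}. Expanding $\phi_\eta$ via the exchange conflations, we have $\phi_\eta([T_i(t_1')]) = [T_i(t_1)]$ for $i \neq k$ and $\phi_\eta([T_k(t_1')]) = -[T_k(t_1)] + \sum_{i\neq k} m_i[T_i(t_1)]$, where the multiplicities $m_i$ equal $a_{ik}^{(t_1')}$ (arrows $i\to k$ in $Q(t_1')$) when $\eta = +1$ and $a_{ki}^{(t_1')}$ (arrows $k\to i$ in $Q(t_1')$) when $\eta = -1$. Since $Q(t_1') = \mu_k(Q(t_1))$ and the quivers are $2$-cycle-free by assumption~b), these numbers coincide with $[-b_{ik}^{(t_1)}]_+$ and $[b_{ik}^{(t_1)}]_+$ respectively, i.e., with the off-diagonal entries of the $k$-th column of $E_{k,\eta}(Q(t_1))$. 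The transformation of the index from $\ct(t_1')$ to $\ct(t_1)$ therefore exactly matches the transition from $G(t_1', t_2)$ to $G(t_1, t_2)$ given by the $g$-vector mutation rule, completing the induction.

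The main obstacle is the careful matching of sign and quiver conventions. Specifically, one must verify that the sign $\eta$ choosing between $\phi_+$ and $\phi_-$ aligns with the sign $\eps$ appearing in $E_{k,\eps}$ in \eqref{gvectorssign}, and that the middle-term multiplicities of the exchange conflations, which are naturally read off $Q(t_1')$, correspond to the $[\mp b_{ik}^{(t_1)}]_+$ data read off $Q(t_1)$. Both reconciliations rest on sign-coherence of $g$-vectors together with the quiver mutation rule $\mu_k$ under the $2$-cycle-free hypothesis, so one must adhere carefully to the conventions set up in Section~\ref{ss:from-ice-quivers} and Section~\ref{ss:extended g-vectors} to keep signs straight throughout.
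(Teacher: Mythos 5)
Your argument is essentially the paper's proof: induct on the distance $d(t_1,t_2)$ in $\mathbb{T}_r$, apply Theorem~\ref{indtheorem} once in the inductive step, and identify the resulting transformation of indices with the recursion for $g$-matrices. You carry the induction by moving the base point inward towards $t_2$ whereas the paper propagates outward from the trivial case $t_1=t_2$, but this is purely cosmetic; the only substantive addition is that you spell out the matrix of $\phi_\eta$ in terms of arrow counts and the mutation rule $Q(t_1')=\mu_k(Q(t_1))$, which the paper compresses into the single line $H(t_1',t_2)=E_\eps(Q(t_1))\,G(t_1,t_2)$. On that point, one word of caution: the middle term $E$ of the conflation $T_k^*\rightarrowtail E\twoheadrightarrow T_k$ used to define $\phi_+$ has multiplicities governed by the arrows $i\to k$ in the quiver $Q(t_1')$ at the \emph{source} of $\phi_\pm$, so the matrix of $\phi_+$ is $E_{k,-1}(Q(t_1'))=E_{k,+1}(Q(t_1))$. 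Your conclusion $M_{\phi_\eta}=E_{k,\eta}(Q(t_1))$ is therefore correct and coincides with the $\phi_\pm$-reformulation displayed immediately after~\eqref{gvectorssign} (where $\phi_+$ uses arrows $i\to k$ and is triggered by $x_k\geq 0$); note, however, that a completely literal reading of~\eqref{gvectorssign}, applied from $t_1'$ to $t_1$, would instead produce $E_{k,\eta}(Q(t_1'))$, so when you write that your computation ``exactly matches \eqref{gvectorssign}'' you should reconcile the quiver and the sign conventions explicitly rather than take the agreement for granted.
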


\begin{proof} Let $l$ be the length of the path from $t_0$ to $t$. We will induct on $l$. 
For $l=0$, the claim clearly holds. Suppose we have
\[ 
\ind_{\ct}(T_j(t_2)) = \sum_i g_{ij}^{t_1}(t_2)[T_i]. 
 \] 
 and there is an edge labeled $k$ from $t_1$ to $t_1'$. Put $\ct'=\add(T(t_1'))$.
 If we apply \begin{magenta}Theorem\end{magenta}~\ref{indtheorem} to $X=T_j(t)$, we find that
 \[
 \ind_{\ct'}(T_j(t_2)) = \sum_i  \phi_\eps( g_{ij}^{t_1}(t_2) [T_i]) = \sum_i h_{ij}^{t_1'}(t_2) [T_i], 
 \]
 where $\eps$ is the sign of the $k$th row of the matrix $G(t_1, t_2)=(g_{ij}^{t_1}(t_2))$. It follows
 that the matrix $H(t_1',t_2)=(h_{ij}^{t_1'}(t_2))$ is given by
 \[
 H(t_1', t_2) = E_\eps(Q(t_1)) G(t_1, t_2).
 \]
 Therefore, by equation (\ref{gvectorssign}), we have $H(t_1', t_2)=G(t_1', t_2)$ as claimed.
\end{proof}

\begin{theorem} The map $M \mapsto \mathrm{ind}_{\mathcal{T}^0}(M)$ induces a bijection from the isomorphism classes of rigid indecomposables reachable from $\mathcal{T}^0$ onto the set of $g$-vectors.
\end{theorem}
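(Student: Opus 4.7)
The proof plan is to combine the two preceding theorems in a direct way. The statement asserts that a certain map is both well-defined, surjective, and injective; each of these will follow immediately from results already established, and the main task is simply to identify what plays what role.

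First I would unpack the two sides of the claimed bijection. On one side, a rigid indecomposable $M$ reachable from $\mathcal{T}^0$ is, by definition, an indecomposable summand of some reachable basic cluster-tilting object $T(t)$; equivalently, $M$ is isomorphic to $T_j(t)$ for some vertex $t\in\mathbb{T}_r$ and some $1\leq j\leq m$. In particular $M$ is rigid, since direct summands of rigid objects are rigid. On the other side, the set of $g$-vectors is, by Theorem~\ref{thm:index vs g-vector} combined with the identification $K_0(\mathcal{T}^0)\cong \mathbb{Z}^m$ using the basis $[T_1^0],\dots,[T_m^0]$, the set of vectors $g_j^{t_0}(t)$ with $t$ ranging over vertices of $\mathbb{T}_r$ and $1\leq j\leq m$.

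Surjectivity is then immediate from Theorem~\ref{thm:index vs g-vector} applied to $t_1=t_0$ and $t_2=t$: every $g$-vector $g_j^{t_0}(t)$ is realized as $\mathrm{ind}_{\mathcal{T}^0}(T_j(t))$, and $T_j(t)$ is by construction a rigid indecomposable reachable from $\mathcal{T}^0$. Injectivity is immediate from Theorem~\ref{thm:index determines rigid object}: if $M_1$ and $M_2$ are two reachable rigid indecomposables with the same index in $K_0(\mathcal{T}^0)$, then since both are rigid, Theorem~\ref{thm:index determines rigid object} forces $M_1\cong M_2$.

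There is no real obstacle in this argument; the only subtle point worth flagging is the inclusion of the projective-injective summands. These are fixed under all mutations, so the corresponding indecomposables $T_j(t)=T_j^0$ for $j>r$ are automatically reachable, and their indices are the standard basis vectors $e_j$ for $j>r$, which are exactly the $g$-vectors $g_j^{t_0}(t_0)$. Once this is noted, the two preceding theorems give the bijection at once.
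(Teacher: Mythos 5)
Your proof is correct and follows the same approach as the paper, which simply cites Theorem~\ref{thm:index determines rigid object} for injectivity and the definition of reachability (together with Theorem~\ref{thm:index vs g-vector}) for surjectivity. Your added remark about projective-injective summands is a reasonable clarification but is already implicit in how reachability and indices are set up.
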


\begin{proof} We have injectivity by Theorem~\ref{thm:index determines rigid object} and surjectivity by the
definition of reachability.
\end{proof}

\subsection{Derived equivalences associated to sequences of mutations}
As in section~\ref{ss:g-vectors as indices}, we denote by $\cc$ a stably $2$-CY
Frobenius category, by
\[
T^0=T^0_1\oplus \cdots \oplus T^0_r \oplus T^0_{r+1}\oplus \cdots \oplus T^0_m
\]
a cluster-tilting object with associated cluster-tilting subcategory
$\ct^0=\add(T^0)$, by $\mathbb{T}_r$ the $r$-regular tree with root $t_0$ and
by $\ct(t)$ the cluster-tilting subcategory associated with a vertex $t$ of $\mathbb{T}_r$.

For a $k$-linear category $\cs$, a 
(right) $\cs$-module is a $k$-linear functor $M: \cs^{op} \to \Mod k$. We write
$\Mod \cs$ for the category of all $\cs$-modules and $\mod\cs$ for the category 
of $\cs$-modules whose values are finite-dimensional vector spaces. 
We define $\cd\cs$ to be the unbounded derived category of $\Mod\cs$.
Its full subcategory of compact objects is the perfect derived category
$\per(\cs)$. We write $\cd^b(\mod\cs)$ for the full subcategory of $\cd\cs$
whose objects are the complexes $M$ with bounded cohomology and
such that $H^p(M)$ is finite-dimensional for each $p\in\Z$.

Let $\proj(\ct^0)$ be the subcategory of $\Mod \ct^0$ formed by the finitely
generated projective modules. The Yoneda functor yields an equivalence
\[
\ct^0 \iso \proj(\ct^0)
\]
taking an object $T'$ to ${T'}^\wedge=\Hom(?, T')$. Whence an induced equivalence
\[
\ch^b(\ct^0) \iso \per(\ct^0),
\]
\begin{magenta} where $\ch^b(\ct^0)$ denotes the bounded homotopy category of the additive category $\ct^0$.
\end{magenta}
By composing its quasi-inverse with the functor $\ch^b(\ct^0) \to \cd^b(\cc)$ induced by
the inclusion $\ct^0 \to \cc$ we obtain a canonical functor
\[
\Psi: \per(\ct^0) \to \cd^b(\cc).
\]
Let $\cf d$ be the full subcategory of $\per(\ct^0)$ whose objects are
the $2$-term complexes $T'^\wedge \to T''^\wedge$, where $T'$ and $T''$ belong
to $\ct^0$ and the differential is $d^\wedge$ for an inflation $d$ of $\cc$ (the
abbreviation $\cf d$ stands for `fundamental domain').
The image under $\Psi$ of such an object is the complex $d: T' \to T''$, which is
clearly quasi-isomorphic to $\cok(d)$. Thus, the functor $\Psi$ induces
a functor $\cf d \to \cc$.

\begin{lemma} The induced functor $\cf d \to \cc$ is an equivalence
of $k$-categories.
\end{lemma}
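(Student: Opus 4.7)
The plan is to establish essential surjectivity and full faithfulness of the induced functor $\cf d \to \cc$ separately; together they yield the claimed equivalence.

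Essential surjectivity is immediate from Proposition~\ref{rightapproximationprop}: every $X\in\cc$ fits into a conflation $T_1 \rightarrowtail T_0 \twoheadrightarrow X$ with $T_0, T_1\in\ct^0$, so $X\cong \cok(T_1\to T_0)$ is the image under $\Psi$ of the two-term complex $T_1^\wedge \to T_0^\wedge$, which lies in $\cf d$ by construction.

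For full faithfulness, I would fix two objects $P^i_\bullet = (T'_i \xrightarrow{d_i} T''_i)$ in $\cf d$ with $X_i=\cok(d_i)$. Since these are bounded complexes of Yoneda-representables (hence of projective $\ct^0$-modules), the morphisms between them in $\per(\ct^0)$ are homotopy classes of chain maps, i.e., pairs $(f,g)$ with $f:T'_1\to T'_2$ and $g:T''_1\to T''_2$ satisfying $g d_1 = d_2 f$, modulo the relation $(f,g)\sim(f+hd_1,\, g+d_2 h)$ for $h:T''_1 \to T'_2$. The induced functor sends $(f,g)$ to the morphism $\bar g : X_1\to X_2$ determined by the universal property of cokernels, and the task is to show this assignment is a bijection.

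For surjectivity, given $\bar g: X_1\to X_2$, I would lift the composition $T''_1\twoheadrightarrow X_1\xrightarrow{\bar g} X_2$ to a morphism $g:T''_1\to T''_2$ along the deflation $T''_2\twoheadrightarrow X_2$; the obstruction lies in $\Ext^1_{\cc}(T''_1,T'_2)$, which vanishes by the rigidity of $T^0$. Then $g d_1$ becomes zero in $X_2$, hence factors through $T'_2 \hookrightarrow T''_2$, producing the required $f:T'_1\to T'_2$ with $d_2 f = g d_1$. For injectivity, if $\bar g = 0$, then $g$ factors as $d_2 h$ for some $h:T''_1\to T'_2$; combined with $d_2 f = g d_1 = d_2 h d_1$ and the fact that $d_2$ is an inflation, hence a monomorphism, one concludes $f = h d_1$, so $(f,g)$ is null-homotopic.

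The key technical input is the vanishing $\Ext^1_{\cc}(T'',T')=0$ for $T',T''\in\ct^0$, which comes from the rigidity of the cluster-tilting object $T^0$; I expect this to be the only nontrivial ingredient, while the translation between morphisms in $\per(\ct^0)$ and chain maps modulo homotopy is standard as soon as the source complex is a complex of projectives.
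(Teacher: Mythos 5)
Your proof is correct and follows essentially the same route as the paper's: essential surjectivity via the approximation conflation from Proposition~\ref{rightapproximationprop}, and full faithfulness by lifting along the deflation $T''_2 \twoheadrightarrow X_2$ using the vanishing of $\Ext^1_{\cc}(T''_1,T'_2)$ (rigidity of $T^0$) for fullness, and factoring through the kernel together with $d_2$ being a monomorphism for faithfulness. The identification of $\per(\ct^0)$-morphisms between these two-term complexes of representables with homotopy classes of chain maps, which you spell out, is also implicit in the paper's argument.
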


\begin{proof} Since $\ct^0$ is a cluster-tilting subcategory of $\cc$, we
know that for each object $X$ of $\cc$, there is a conflation
\[
\begin{tikzcd}
	{T'} & T'' & X
	\arrow[tail, from=1-1, to=1-2]
	\arrow[two heads, from=1-2, to=1-3]
\end{tikzcd} 
\]
with $T'$, $T''\in \ct^0$. This yields essential surjectivity. The 
full faithfulness is easily deduced from the rigidity of $\ct^0$.
To check faithfulness, let us take two objects  of $\cf d$ given by
inflations
\begin{tikzcd}
	{T_1'} & {T_1''}
	\arrow["{i_1}", tail, from=1-1, to=1-2]
\end{tikzcd} and \begin{tikzcd}
	{T_2'} & {T_2''}
	\arrow["i_2", tail, from=1-1, to=1-2]
\end{tikzcd} with images $X_1$ and $X_2$ respectively in $\cc$. Suppose the morphism $f=(a,b)$
\[\begin{tikzcd}
	{T_1'} & {T_1''} \\
	{T_2'} & {T_2''}
	\arrow["{i_1}", tail, from=1-1, to=1-2]
	\arrow["a"', from=1-1, to=2-1]
	\arrow["b", from=1-2, to=2-2]
	\arrow["{i_2}", tail, from=2-1, to=2-2]
\end{tikzcd}\] 
goes to the $0$-morphism from $X_1$ to $X_2$. Then construct a diagram
\[\begin{tikzcd}
	{T_1'} & {T_1''} & {X_1} \\
	{T_2'} & {T_2''} & {X_2}
	\arrow["{i_1}", tail, from=1-1, to=1-2]
	\arrow["{p_1}", two heads, from=1-2, to=1-3]
	\arrow["{p_2}", two heads, from=2-2, to=2-3]
	\arrow["{i_2}", tail, from=2-1, to=2-2]
	\arrow["a"', from=1-1, to=2-1]
	\arrow["b"', from=1-2, to=2-2]
	\arrow["{c=0}"', from=1-3, to=2-3]
	\arrow["h"{description}, dashed, from=1-2, to=2-1]
\end{tikzcd}\]
as follows: From the commutativity of the right hand square in the above diagram, we have $p_2b = 0$. Hence, there exists an $h:T_1'' \rightarrow T_2'$, such that $b = i_2h$.  Now, $i_2hi_1 = bi_1 = i_2a$ implies $hi_1 = a$ since $i_2$ is a monomorphism. Thus, the morphism $(a,b)$ is null-homotopic, which
was to be shown.
To check fullness, we consider a diagram
\[\begin{tikzcd}
	{T_1'} & {T_1''} & {X_1} \\
	{T_2'} & {T_2''} & {X_2}
	\arrow["{i_1}", tail, from=1-1, to=1-2]
	\arrow["{p_1}", two heads, from=1-2, to=1-3]
	\arrow["{p_2}", two heads, from=2-2, to=2-3]
	\arrow["{i_2}", tail, from=2-1, to=2-2]
	\arrow["a", dashed, from=1-1, to=2-1]
	\arrow["b"',dashed,  from=1-2, to=2-2]
	\arrow["c"', from=1-3, to=2-3]
\end{tikzcd}\]
with a given morphism $c: X_1 \to X_2$. The second conflation yields the
exact sequence
\[
\cc(T_1'', T_2'') \to \cc(T_1'', X_2) \to \Ext^1_\cc(T_1'', T_2').
\]
Since $\ct^0$ is rigid, the third term vanishes. So the composition $c\circ p_1: T_1'' \to X_2$ lifts to a morphism $b: T_1'' \to T_2''$. Clearly $b$ induces a morphism
$a: T_1' \to T_2'$ and $c$ is the image of the homotopy class of $(a,b)$.
\end{proof}

The following theorem is an adaptation of a result by Nagao \cite{Nagao13},
cf.~also section~7.5 of \cite{keller2012cluster}.

\begin{magenta}
\begin{theorem}  Let $t$ be a vertex of the $r$-regular tree $\mathbb{T}_r$.
There is a derived equivalence
\[
\Phi(t): \cd(\ct(t)) \to \cd(\ct^0)
\]
satisfying the following condition
\begin{itemize}
\item[(H)] the equivalence $\Phi(t)$ takes each object of the heart $\Mod \ct(t)$ of the canonical $t$-structure on 
$\cd(\ct(t))$ to an object whose homology is concentrated in
degrees $-1$ and $0$. 
\end{itemize}
\end{theorem}
\end{magenta}
\begin{proof}
Let
\[\begin{tikzcd}
	{t=t_l} & {t_{l-1}} & \cdots & {t_2} & {t_1} & {t_0}
	\arrow["{k_l}", from=1-1, to=1-2]
	\arrow["{k_{l-1}}", from=1-2, to=1-3]
	\arrow["{k_{3}}", from=1-3, to=1-4]
	\arrow["{k_2}", from=1-4, to=1-5]
	\arrow["{k_1}", from=1-5, to=1-6]
\end{tikzcd}
\]
be the unique path in $\mathbb{T}_r$ linking $t$ to the root $t_0$. 
We abbreviate $\ct(t_j)$ by $\ct^j$. We proceed by induction
on its length $l$. 
If $l=0$, we let $\Phi(t)$ be the identity functor. Now suppose that $l>0$ and
that we have constructed the equivalence
\[
F=\Phi(t_{l-1}): \cd \ct^{l-1} \iso \cd \ct^0
\]
satisfying the above condition (H) on the heart. Let us
abbreviate $i=k_l$. Let $S_i$ be the simple quotient of the indecomposable projective
module $T_i(t_{l-1})^\wedge$. Consider the subcategories
\[
\cf=F^{-1}(\Mod \ct^0) \cap \Mod \ct^{l-1} \quad\mbox{and}\quad
\cg=F^{-1}(\Si\Mod \ct^0) \cap \Mod \ct^{l-1}.
\]
It is well-known and easy to check that $(\cf,\cg)$ is a {\em torsion pair} in
$\Mod \ct^{l-1}$, i.e. we have $\Hom(G,F)=0$ for all $G\in \cg$ and $F\in \cf$
and for each module $M$, there is a short exact sequence
\[
\begin{tikzcd}
0 \ar[r] & M_\cg \ar[r] & M \ar[r] & M^\cf \ar[r] & 0
\end{tikzcd}
\]
with $M_\cg\in \cg$ and $M^\cf \in \cf$, see section~7.6 of \cite{keller2012cluster}.  
It is clear that each simple object of $\Mod \ct^{l-1}$ must lie in $\cg$ or $\cf$. 
In particular, this holds for $S_i$. Thus, either $FS_i$ lies in $\Mod \ct^0$ or
in $\Sigma \Mod \ct^0$. We will use this fact below.

The object $T_i$ fits into the exchange conflations
\[ 
 \begin{tikzcd}
	{T_i^*} & E & {T_i}
	\arrow[tail, from=1-1, to=1-2]
	\arrow[two heads, from=1-2, to=1-3]
\end{tikzcd} \quad\mbox{and}\quad
 \begin{tikzcd}
	{T_i} & E' & {T_i^*}
	\arrow[tail, from=1-1, to=1-2]
	\arrow[two heads, from=1-2, to=1-3]
\end{tikzcd}.
\]
Let us recall from Prop.~4 of \cite{Palu09a} that there are two canonical derived equivalences 
\[ 
\begin{tikzcd}
\Phi_{\pm}:  \cd \ct^l \ar[r] & \cd \ct^{l-1}
\end{tikzcd}
\]
which both send $T_j^\wedge$ to $T_j^\wedge$ for $j\neq i$. The equivalence $\Phi_+$ sends
$(T^*_i)^\wedge$ to the cone over the
morphism
\[
 \begin{tikzcd}
	{T_i^\wedge} & {E'}^\wedge 
	\arrow[from=1-1, to=1-2]\end{tikzcd}
\]
and the equivalence $\Phi_-$ sends $\Sigma(T^*_k)^\wedge$ to the cone over the morphism
\[
\begin{tikzcd}
E^\wedge \ar[r] & T_i^\wedge.
\end{tikzcd}
\]
Let $S_i^*$ be the simple quotient of the $\ct^l$-module $T_i^{*\wedge}$. 
We have the projective resolution
\[
\begin{tikzcd} 
0 \ar[r] & {T_i}^{*\wedge} \ar[r] & E^\wedge \ar[r] & {E'}^\wedge \ar[r] & {T_i^*}^\wedge \ar[r] & S_i^* \ar[r] & 0
\end{tikzcd}
\]
obtained by splicing the exchange conflations.
Using the above description of the images of ${T_i^*}^\wedge$ under $\Phi_{\pm}$, we check
that $\Phi_+(S_i^*) \cong \Si S_i$ and $\Phi_-(S_i^*) \cong \Si^{-1} S_i$.
If $\Phi(t_{l-1})(S_i)$ belongs to $\Mod \ct^0$, we define $\Phi(t)= \Phi(t_{l-1}) \circ \Phi_-$.
If $\Phi(t_{l-1})(S_i)$ belongs to $\Si\Mod \ct^0$, we define $\Phi(t)= \Phi(t_{l-1}) \circ \Phi_+$.
It is easy to see that with this definition, the equivalence
\[
\Phi(t): \cd(\ct(t)) \iso \cd(\ct^0)
\]
satisfies condition (H).

\end{proof}

We recall from \cite{Palu09a} that the functors $\Phi_{\pm}$ also induce equivalences 
\[ 
\per(\ct^{l-1}) \iso \per(\ct^l) \quad\mbox{and} \quad \cd^b(\mod \ct^{l-1}) \iso \cd^b(\mod \ct^l).
\]
Thus, the functor $\Phi(t)$ induces isomorphisms 
\[ 
K_0({\rm{per}} (\ct(t))) \iso K_0({\rm{per}}(\mathcal{T}^0))  \quad\mbox{and}\quad
K_0(\mathcal{D}^b (\rm{mod} \ct(t))) \iso K_0(\mathcal{D}^b (\rm{mod}\mathcal{T}^0)).
\]
For $1 \leq i\leq m$, let $P_i(t)= T_i(t)^\wedge$ and let $S_i(t)$ be its unique simple quotient. 
Then the $[P_i(t)]$ and the  $[S_i(t)]$ form dual bases in $K_0(\per(\ct(t)))$ and 
$K_0(\cd^b(\mod\ct(t)))$ for the pairing induced by 
\[ 
\RHom(,) : \per(\ct(t)) \times \cd^b(\mod\ct(t)) \rightarrow \per(k). 
\]
Hence, the images under $\Phi(t)$ of the $P_i(t)$ and the $S_i(t)$ yield dual bases of 
\[
K_0({\rm{per}}(\ct^0))\quad\mbox{and}  \quad K_0(\mathcal{D}^b (\rm{mod}\ct^0)). 
\]

\begin{lemma} Let $Q(t_0)=Q$ be the ice quiver of the endomorphism algebra of $T^0$.
\begin{itemize}
\item[a)] The quiver of the endomorphism algebra of $\ct(t)$ is $Q(t)$.
\item[b)] The coordinates of $[\Phi(t)(P_j(t))]$ in the basis of the $[P_i(t_0)]$ are
$g^{t_0}_{ij}(t)$, $1\leq i \leq m$ for $Q$.
\item[c)] \begin{magenta}The coordinates of the class $[FS_j(t)]$ in the basis of the $[S_i(t_0)]$ 
are the $c_{ij}(t)$, where $C(t)$ is the $c$-matrix at $t$.\end{magenta}
\end{itemize}
\end{lemma}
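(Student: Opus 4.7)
The plan is to proceed by induction on the length $l$ of the unique path in $\mathbb{T}_r$ from $t_0$ to $t=t_l$. In the base case $l=0$, $\Phi(t_0)$ is the identity, so $G(t_0,t_0)=I_m=C(t_0)$ and $Q(t_0)=Q$; all three statements hold tautologically. For the inductive step, I assume the three assertions at $t_{l-1}$ and derive them at $t_l$, where the edge from $t_{l-1}$ to $t_l$ is labelled $i$.

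Part (a) is the categorical interpretation of quiver mutation. The cluster-tilting object $T(t_l)$ is the mutation of $T(t_{l-1})$ at the indecomposable summand $T_i(t_{l-1})$, and, since by hypothesis $\ct^0$ determines a cluster structure, no cluster-tilting object in $\cc$ has loops or $2$-cycles in its endoquiver. Theorem~I.1.6 of \cite{buan2009cluster} then asserts that the endoquiver of $T(t_l)$ is obtained from that of $T(t_{l-1})$ by Fomin--Zelevinsky mutation at $i$; combining this with the inductive hypothesis and the combinatorial recipe $Q(t_l)=\mu_i Q(t_{l-1})$ yields (a).

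For part (b), when $j\neq i$ we have $T_j(t_l)=T_j(t_{l-1})$ and both $\Phi_\pm$ fix $T_j^\wedge$, so the $j$-th columns of $G(t_0,t_l)$ and $G(t_0,t_{l-1})$ coincide, matching \eqref{gvectorscolor} since $E_{i,\eps}$ differs from the identity only in its $i$-th column. For $j=i$, the preceding theorem together with the exchange conflations give, in $K_0(\per\ct^{l-1})$,
\[
[\Phi_+((T_i^*)^\wedge)] = \sum_{i\to k}[T_k^\wedge]-[T_i^\wedge]\quad\mbox{and}\quad [\Phi_-((T_i^*)^\wedge)] = \sum_{k\to i}[T_k^\wedge]-[T_i^\wedge],
\]
the arrows being those of $Q(t_{l-1})$. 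Applying the inductive hypothesis to each $[T_k^\wedge]=[P_k(t_{l-1})]$ and comparing with \eqref{gvectorscolor} produces the desired $i$-th column of $G(t_0,t_l)$, provided that the case distinction in the definition of $\Phi(t_l)$ ($\Phi_-$ if $\Phi(t_{l-1})(S_i)\in\Mod\ct^0$, else $\Phi_+$) aligns with the green/red colour of the vertex $i$ at $t_{l-1}$. This alignment is furnished by part (c) at $t_{l-1}$: the coordinates of $[\Phi(t_{l-1})(S_i(t_{l-1}))]$ in the basis $\{[S_j(t_0)]\}$ are by the inductive hypothesis the $c$-vector $c_i(t_{l-1})$, and sign coherence translates ``non-negative'' into ``$\Phi(t_{l-1})(S_i)\in\Mod\ct^0$'', i.e.\ ``$i$ is green''.

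Finally, part (c) at $t_l$ follows from part (b) at $t_l$ by duality: under $\Phi(t_l)$ the bases $\{[P_j(t_l)]\}$ and $\{[S_i(t_l)]\}$ remain dual, so the matrix expressing $[\Phi(t_l)(S_i(t_l))]$ in terms of $[S_j(t_0)]$ is the inverse transpose of $G(t_0,t_l)$, which by Theorem~1.2 of \cite{nakanishi2012tropical} is precisely $C(t_l)$. The main obstacle is the sign and orientation bookkeeping in the $i$-th column computation of (b), in particular verifying that the choice of $\Phi_\pm$ matches the green/red dichotomy and that the arrow directions in $Q(t_{l-1})$ agree with the summation convention of \eqref{gvectorscolor}; once this is pinned down, all other ingredients are standard.
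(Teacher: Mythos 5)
Your proposal follows essentially the same route as the paper's proof: simultaneous induction on the length $l$ of the path from $t_0$, the cluster structure gives (a), the Palu equivalences together with equation~(\ref{gvectorscolor}) give (b), duality gives (c), and part~(c) at $t_{l-1}$ supplies the alignment between the green/red colouring and the choice of $\Phi_{\pm}$. Where you add something is the explicit $K_0$-computation $[\Phi_+((T_i^*)^\wedge)]=\sum_{i\to k}[T_k^\wedge]-[T_i^\wedge]$ and $[\Phi_-((T_i^*)^\wedge)]=\sum_{k\to i}[T_k^\wedge]-[T_i^\wedge]$, which the paper leaves implicit behind the asserted matrix identity $G(t_0,t_{l-1})E_{k,\eps}(Q(t_{l-1}))$.

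You rightly flag the sign and orientation bookkeeping as the main obstacle, and this is worth pinning down because it reveals a small inconsistency in the source: the $i$-th column of $E_{i,+1}$ is $-e_i+\sum_{i\to j}e_j$, which matches your formula for $\Phi_+$, so the green case ($\eps=+1$, $c_i(t_{l-1})\geq 0$, $FS_i\in\Mod\ct^0$) must correspond to $\Phi_+$, not $\Phi_-$ as stated in the paper's construction of $\Phi(t)$. (Equivalently: if $FS_i\in\Mod\ct^0$ and one applied $\Phi_-$, then $\Phi(t_l)(S_i^*)\cong\Sigma^{-1}FS_i$ would have cohomology in degree $+1$, violating condition (H).) The paper's proof of this very lemma implicitly uses the corrected convention when it writes $\Phi(t_l)=\Phi(t_{l-1})\circ\Phi_\eps$ with $\eps=1$ for green; you should do the same. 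Once the swap is made, the alignment argument via part~(c) at $t_{l-1}$ and Nakanishi--Zelevinsky duality closes the induction exactly as you describe.
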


\begin{proof} We prove the claims simultaneously using induction on the distance
between $t_0$ and $t=t_l$ in the regular tree and the second construction of the
extended $g$-vectors given in section~\ref{ss:extended g-vectors}. For
$l=0$, the quiver $Q(t)$ equals $Q$ and the functor $\Phi(t)$ is the identity. So the basis given by the
$[P_i(t)]$ coincides with that of the $[P_i(t_0)]$ and the basis of the $[S_i(t)]$
coincides with that of the $[S_i(t_0)]$. This shows the two claims for $l=0$.
Now suppose the claims hold for $l-1$ and that $t_{l-1}$ is linked to $t=t_l$ by an
edge labeled $i=k_l$. Put $F=\Phi(t_{l-1})$. Since the cluster-tilting subcategories
define a cluster structure on $\cc$, the quiver $Q(t)$ of the endomorphism algebra
of $\ct(t)$ is obtained from the quiver $Q(t_{l-1})$ of the endomorphism algebra
of $\ct(t_{l-1})$ by mutation at $i$. By the induction hypothesis, the class of 
$F(S_i(t_{l-1}))$ is given by $c_i(t_{l-1})$. So the object $FS_i(t_{l-1})$ lies in $\Mod \ct^0$
if and only if $c_i(t_{l-1})$ is positive, i.e. if the vertex $i$ is green and
$FS_i(t_{l-1})$ lies in $\Si \Mod \ct^0$ if and only if $c_i(t_{l-1})$ is positive, i.e. the
vertex $i$ is red. Let us define $\eps=1$ if $i$ is green and $\eps=-1$ if it is red.
We have defined $\Phi(t_l)= \Phi(t_{l-1}) \circ \Phi_\eps$. Thus, the matrix of the
map induced in the Grothendieck group by $\Phi(t_l)$ is the product 
\begin{equation}
G(t_0,t_{l-1})E_{k,\eps}(Q(t)).
\end{equation}
Thanks to equation (\ref{gvectorscolor}), this implies the claim for the
$g_j(t_l)$. By duality, it also implies the claim for the $c_j(t_l)$.
\end{proof}

\begin{remark} The above proof shows that the sign-coherence of the
$c$-vectors and the $g$-vectors is a consequence of the existence
of a $2$-Calabi--Yau realization of the ice quiver $Q$.
\end{remark}


\section{Cluster algebra structure on the Grassmannian}\label{s.Introduction 2} 

Let $\grass{k}{n}$ denote the {\em Grassmannian variety} 
consisting of $k$-dimensional subspaces of $\mathbb{C}^n$. 
The {\em Pl\"ucker embedding} \cite[Chapter 4]{lakshmibai2007standard} is the map
\[ \grass{k}{n} \longrightarrow {\mathbb{P}}(\bigwedge^k\mathbb{C}^n)  \]
\[    V \mapsto [v_1 \wedge v_2  \ldots \wedge v_k] \]
where $v_1$, \ldots, $v_k$ is any basis of the subspace $V$ of $\mathbb{C}^n$. 
Let $e_1, \ldots , e_n$ denote the standard basis of $\mathbb{C}^n$. Let $I(k,n)$ be
the set of $k$-tuples
\[
\{(i_1,i_2, \ldots, i_k) | \ 1 \leq i_1 < i_2 < \ldots <i_k \leq n \}.
\]
 Then the wedge products $e_{\tau} = e_{i_1} \wedge e_{i_2} \ldots \wedge e_{i_k}$, ${\tau \in I(k,n)}$, 
 form a basis of $\bigwedge^k\mathbb{C}^n$. The dual basis in $(\bigwedge^k\mathbb{C}^n)^*$ is
 formed by the {\em Pl\"{u}cker coordinates $p_{\phi}$, $\phi\in I(k,n)$.}

Let $\mathbb{C}[\grass{k}{n}]$ be the homogeneous coordinate ring of the Grassmannian for the Pl\"{u}cker embedding. We have
\[ \mathbb{C}[\grass{k}{n}] = \mathbb{C}[p_{\tau} | \tau \in I(k,n)]. \]  

Let $G = SL_n(\mathbb{C})$ denote the group of $n \times n$ matrices with determinant $1$. Let $B$ be the Borel subgroup of $G$ consisting of upper triangular matrices in $G$ and let $T$ be the maximal torus consisting of the diagonal matrices in $G$. Let $X(T)$ denote the group of characters of $T$.  In the root system $R$ of $(G,T)$,
let $R^{+}$ denote the set of positive roots with respect to $B$. Let  $S=\{\alpha_1,\ldots,\alpha_{n-1}\}\subseteq R^{+}$ denote the set of simple roots and $\{\omega_1,\ldots,\omega_{n-1}\}$ the fundamental weights.

Let $N_G(T)$ denote the normalizer of $T$ in $G$. The Weyl group $W$ of $G$ is defined to be the quotient $N_G(T)/T$, and for every $\alpha \in R$ there is a corresponding reflection $s_{\alpha} \in W$.  
The Weyl group $W$ is generated by the simple reflections $s_\alpha$ associated with the simple
roots $\alpha$. This also defines a length function $l$ on $W$.

For a subset $K\subseteq S$, denote by $W^{K}$ the set of elements $w\in W$ such that
$w(\alpha)>0$ for all $\alpha\in K$. Let $W_{K}$ be the subgroup of $W$ generated by 
the $s_{\alpha}$, $\alpha \in K$. We recall that $W^K$ is a system of representatives of
minimal length of the cosets of $W$ modulo $W_K$. In particular, every $w\in W$ can be
uniquely expressed as $w=w^{K}w_{K}$, with $w^{K}\in W^{K}$ and $w_{K}\in W_{K}$. 
For $w\in W$, let $n_w \in N_{G}(T)$ be a representative of $w$.  We denote by $P_{K}$ the parabolic subgroup 
of $G$ generated by $B$ and the $n_{w}$, $w \in W_{K}$. Then $W_{K}$ is the Weyl group of the parabolic subgroup $P_{K}$ and abusing notation we also denote it as $W_{P_{K}}$.
When $K = S \setminus \{\alpha_k\}$, then $P=P_K$ is a maximal parabolic
subgroup and the quotient $G/P$ is canonically isomorphic to $\grass{k}{n}$. Now 
we have $W_P = S_k \times S_{n-k}$,  so the minimal length coset representatives of 
$W/W_P$  can be identified with the elements $w\in W$ such that we have
\[
w(1) < w(2) < \ldots < w(k) \mbox{ and } w(k+1) < w(k+2) < \ldots < w(n).
\]
For $K=S \setminus \{\alpha_k\}$, there is a natural identification of  $W^K$ with $I(k,n)$ sending 
$w \in W^K$ to $(w(1),w(2),\ldots,w(k))$.   For $w$ in $I(k,n)$, let $e_{w}$
be the point 
\begin{equation*}
[e_{w(1)} \wedge e_{w(2)}\wedge \cdots \wedge e_{w(k)}]
\end{equation*}
of ${\mathbb P}(\bigwedge^k\mathbb{C}^n)$. Then $e_w$ is a $T$-fixed point of $\grass{k}{n}$ and
in this way, we obtain all the $T$-fixed points of  $\grass{k}{n}$. 
The $B$-orbit $C_w$ through $e_w$ in $G/P$ is the {\em Schubert cell} and its Zariski closure in 
$G/P$ is the {\em Schubert variety $X(w)$}. The {\em Bruhat order} is the order on the $k$-tuples in $I(k,n)$ given 
by containment of Schubert varieties. In this order, we have $v \leq w$ iff $v(i) \leq w(i)$ for $1 \leq i \leq k$. 

Let $w = (a_1,a_2,\ldots,a_k) \in I(k,n)$. 

\begin{magenta}
\begin{definition}\label{defnYoung} The \emph{(increasing) partition} associated with $w$ is 
 $\bf{w} = (\bf{a_1},\bf{a_2},\ldots,\bf{a_n})$, where ${\bf{a_i}} = a_{k-i+1}- (k-i+1)$. 
The \emph{Young diagram ${\cal{Y}}_{w}$} associated with $w$ is the Young diagram whose 
$i$-th row from the top has $\bf{a_i}$ boxes. 
\end{definition}
\end{magenta}
We recall from \cite{lakshmibai1990multiplicities} that ${\cal{Y}}_{w}$ is also the tableau associated with the Schubert variety $X(w)$. 
\begin{example} \label{ex:Young diagram}
Let $k = 3$ and $n = 7$. Let $w = (3,5,7)$. Then ${\cal{Y}}_{w}$ is
\[
\begin{ytableau}
      \none& & &  & \\
      \none& &  &\\
      \none& & \\
     \end{ytableau}.\] 
\end{example}

The cluster algebra structure on the Grassmannian of planes in complex space $\grass{2}{n}$ was first studied by Fomin--Zelevinsky in \cite{fomin2003cluster}. They showed that it is of cluster type $A_{n-3}$. Using a generalisation of double wiring diagrams due to Postnikov (see \cite{Postnikov06}) Scott constructed a cluster algebra structure on the coordinate ring of $\grass{k}{n}$ in \cite{Scott06}. Later, cluster algebra structures in $\grass{k}{n}$ and other partial flag varieties were studied using categorification by modules over preprojective algebras in \cite{GeissLeclercSchroeer08}. We recall the cluster algebra structure on the coordinate ring of the $\grass{k}{n}$ from \cite{GeissLeclercSchroeer08}. 

\begin{theorem}(\cite[10.3.1]{GeissLeclercSchroeer08}) \label{ini.c}

An initial cluster for $\mathbb{C}[{\grass{k}{n}}]$ consists of the Pl\"{u}cker coordinates $p_{w}$ where $w$ is from the following list 
\begin{eqnarray*}
& \color{blue}{\{2, \ldots k+1\}}, \color{blue}{\{3, \ldots, k+2\}}, {\color{blue}{\ldots},} \color{blue}{\{n-k+2, \ldots n+1\}} \cr
& \{1, 3, \ldots , k+1\}, \{1, 4, \ldots, k+1, k+2\} , \ldots,\color{blue}{ \{1, n-k+3, \ldots n, n+1\}}  \cr
& \cdots , \cdots \cr
& \{1, \ldots ,k-3, k-1, k, k+1\}, \{1, \ldots k-3, k, k+1, k+2\} , \ldots, \color{blue}{\{1, \ldots k-3, n-1, n, n+1\}}  \cr
& \{1, \ldots ,k-2, k-1, k+1\}, \{1, \ldots k-2, k+1, k+2\} , \ldots, \color{blue}{\{1, \ldots k-2, n, n+1\}} \cr
&\{1, \ldots k-1, k+1\}, \{1, \ldots k-1, k+2\} , \ldots, \color{blue}{\{1, \ldots k-1, n+1\}} \cr
&\color{blue}{\{1,2\ldots,k\}}.
\end{eqnarray*}
The $n$ words coloured blue correspond to the frozen cluster variables.
\end{theorem}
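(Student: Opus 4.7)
The plan is to establish the theorem through the Jensen--King--Su categorification by exhibiting the listed Pl\"ucker coordinates as the image under the cluster character $CC$ of a cluster-tilting object in $\cm(B)$. Under the bijection of Example~\ref{ex:rank one}, each Pl\"ucker coordinate $p_w$ corresponds to a rank-one Cohen--Macaulay module $L_w$ (viewing $w$ as a $k$-subset of $\{1,\ldots,n\}$ cyclically, so that $n+1$ is identified with $1$). Let $T=\bigoplus_{w} L_w$, where $w$ ranges over the triangular list.

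First, I would verify that any two $k$-subsets appearing in the list are non-crossing in the Oh--Postnikov--Speyer sense; by Proposition~5.6 of \cite{JensenKingSu16}, this yields $\Ext^1_B(L_w,L_{w'})=0$ and hence that $T$ is rigid. The verification uses the fact that the subsets in each row of the triangular arrangement are nested modifications of a common "window" and that the subsets of different rows differ by a single cyclic shift pattern, so no four cyclically ordered indices of the required type can exist. Second, I would count the entries of the triangular array: summing the row lengths gives exactly $k(n-k)+1$ subsets. By the Oh--Postnikov--Speyer theorem cited in Example~\ref{ex:rank one}, this matches the maximal cardinality of a pairwise non-crossing collection of $k$-subsets, so $T$ is a Pl\"ucker cluster and hence a (basic) cluster-tilting object in $\cm(B)$.

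Next, I would identify the blue entries as the frozen variables by observing that they are precisely the cyclically consecutive $k$-subsets, which correspond to the projective-injective rank-one modules in $\cm(B)$ (cf.\ the discussion following Example~\ref{ex:Grassmannian}). Applying the cluster character $CC:\cm(B)\to\mathbb{C}[\grass{k}{n}]$ of Jensen--King--Su, which sends cluster-tilting objects to clusters and each rank-one module $L_w$ to $p_w$, produces the listed Pl\"ucker coordinates as an initial cluster, with the correct frozen/mutable decomposition.

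The main obstacle is the combinatorial non-crossing check, which is delicate because of the cyclic indexing and because subsets from distant rows of the triangle must be compared. An alternative route, closer in spirit to \cite{GeissLeclercSchroeer08}, would be to construct an explicit sequence of mutations relating this seed to Scott's \cite{Scott06} alternating-wiring seed, but the categorical approach sketched above has the advantage that once rigidity and maximality are in hand, no mutation path needs to be exhibited. As a bonus, one can then read off the endoquiver $\boxslash_{k,n}$ of $T$ by computing the spaces of irreducible morphisms between the $L_w$'s, which will be needed for the subsequent applications to $g$-vectors.
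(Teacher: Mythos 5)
The paper does not prove this theorem at all: it is stated with a citation to Section~10.3.1 of Geiss--Leclerc--Schr\"oer \cite{GeissLeclercSchroeer08}, and the surrounding text merely introduces terminology (triangular seed, quiver $\boxslash_{k,n}$) and gives the $k=3,n=7$ example. So there is no ``paper's own proof'' to compare against; your proposal is an independent argument for a result the authors take from the literature.

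Your route through the Jensen--King--Su categorification is sensible and, if you insist on reproving the theorem, is arguably cleaner than the explicit mutation path from Scott's seed that GLS follow. The counting is right: each of the $k$ rows of the triangular array has $n-k$ entries, plus the final $\{1,\ldots,k\}$, giving $k(n-k)+1$; the blue entries are exactly the $n$ cyclic intervals, hence the projective-injective rank-one modules. However, there is a genuine gap in the step ``once rigidity and maximality are in hand, no mutation path needs to be exhibited.'' The Jensen--King--Su cluster character $CC$ is only known to induce a bijection between clusters and \emph{reachable} basic cluster-tilting objects (this is how the paper states their theorem in the introduction and in Example~\ref{ex:rank one}). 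Showing that $T=\bigoplus_w L_w$ is a cluster-tilting object does not, by itself, let you conclude that $CC(T)$ is a cluster of $\mathbb{C}[\grass{k}{n}]$; you still need $T$ to be in the mutation class of a cluster-tilting object whose image is a known cluster. This is true -- Oh--Postnikov--Speyer \cite{OhPostnikovSpeyer15} prove that maximal weakly separated (non-crossing) collections are all connected by mutations -- but it must be invoked explicitly, and it is precisely the ``mutation path'' information you claimed to be able to avoid. Once you add that citation the argument closes; without it, you have only produced a cluster-tilting object, not an initial cluster. You should also note, to avoid any appearance of circularity, that the JKS results you rely on do not themselves use the specific triangular seed of GLS 10.3.1, so the argument is non-circular even though JKS build on GLS.
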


We call the seed corresponding to the initial cluster of the theorem the {\em triangular seed}
and write $\boxslash_{k,n}$ for the corresponding quiver. For example, for 
$k= 3$ and $n=7$, this quiver looks like (cf.~section 10.3.1 of \cite{GeissLeclercSchroeer08}, Example~4.3
of \cite{keller2012cluster} or Figure~1 of \cite{Fraser16}).

\[\begin{tikzcd}
	& \color{blue}{234} & \color{blue}{345} & \color{blue}{456} & \color{blue}{567} \\
	& 134 & 145 & 156 & \color{blue}167 \\
	& 124 & 125 & 126 & \color{blue}127 \\
	\color{blue}{123}
	\arrow[from=1-2, to=1-3]
	\arrow[from=1-3, to=1-4]
	\arrow[from=1-4, to=1-5]
	\arrow[from=2-5, to=1-5]
	\arrow[from=3-5, to=2-5]
	\arrow[from=4-1, to=1-2]
	\arrow[from=4-1, to=3-5]
	\arrow[from=2-2, to=1-2]
	\arrow[from=2-3, to=1-3]
	\arrow[from=2-4, to=1-4]
	\arrow[from=3-4, to=2-4]
	\arrow[from=3-3, to=2-3]
	\arrow[from=3-2, to=2-2]
	\arrow[from=2-2, to=2-3]
	\arrow[from=2-3, to=2-4]
	\arrow[from=2-4, to=2-5]
	\arrow[from=3-3, to=3-4]
	\arrow[from=3-4, to=3-5]
	\arrow[from=3-2, to=3-3]
	\arrow[from=1-3, to=2-2]
	\arrow[from=2-3, to=3-2]
	\arrow[from=1-4, to=2-3]
	\arrow[from=1-5, to=2-4]
	\arrow[from=2-4, to=3-3]
	\arrow[from=2-5, to=3-4]
	\arrow[from=4-1, to=3-2]
\end{tikzcd}.
\]

The label $ijk$ on a vertex corresponds to the Pl\"{u}cker coordinate $p_{ijk}$. The vertices colored blue are frozen vertices. The corresponding Pl\"{u}cker coordinate are therefore coefficients and so cannot be mutated.

\begin{remark} The Pl\"ucker coordinate
$p_I$ is an initial cluster variable for the above initial seed if and only if the Schubert variety $X(I)$ is smooth:  
We note that the $(i,j)$-th element from the above array ({\ref{ini.c}})  corresponds to the Pl\"{u}cker coordinate $p_I$ whose associated Young diagram ${\cal{Y}}_I$ is a rectangular tableau with $k-i+1$ rows and $j-i$ columns. 
From \cite{lakshmibai1990multiplicities}, cf. also
{\cite[Corollary 9.3.3]{billey2000singular}}, we know that ${\cal{Y}}_I$ is rectangular 
iff $X(I)$ is smooth. 
\end{remark}

\section{$\mathbf{g}$-vectors for the Pl\"ucker coordinates}\label{s.Introduction 3} 

We use the notations of \begin{magenta}section\end{magenta} \ref{sec:Grassmannian}. For the sequel,
we fix an initial cluster tilting object $T$, namely the sum of the rank-one modules $L_I$,
where $I$ runs through the $k$-subsets described in Theorem~\ref{ini.c}. 
\begin{magenta}Then the 
quiver of the endomorphism algebra of $T$ is also the one from Theorem~\ref{ini.c}.\end{magenta}
The corresponding diagram will be called Jensen--King--Su diagram. 

\begin{definition} 
Let $I \in I(k,n)$. \begin{magenta}Let ${\cal Y}_I$ be the Young diagram as defined in Definition ~\ref{defnYoung}.\end{magenta} We say that a box $b$ of ${\cal Y}_I$ is a {\em peak} if ${\cal Y}_I$ 
contains no boxes to the right or below $b$. Let $(i, j)$ denote the coordinate of $b$. Then $b$ 
is a {\em valley} of ${\cal Y}_I$ if the complement of ${\cal Y}_I$ in the $k \times (n-k)$ rectangle
contains no box to the left or above the box with coordinates $(i+1, j+1)$.
\end{definition}

For $I \in I(k,n)$ we can associate to each ${\cal Y}_I$ the Jensen--King--Su module 
$L_{I}$ as follows.  Let ${\cal Y}_I^\mathrm{T}$ denote the transpose of the Young 
diagram ${\cal Y}_I$. Rotate it by $3\pi/4$ in the counterclockwise direction. 
We identify the upper rim of this rotated diagram with the upper rim of the JKS diagram of the
module $L_I$ associated with $I$, cf.~Figure~\ref{fig:rim}.

\begin{example} \label{example819}

Let $k=8$ and $n=19$. Let $I = (2,3,5,6,7,14,15,19)$. We consider the JKS module $L_I$. 
The module is pictured in Figure~\ref{fig:rim}. The attached Young diagram ${\cal Y}_I$ is 
\bigskip
\begin{center}
\ytableausetup{smalltableaux}
\begin{ytableau}
$$ & & & & & & & *(red!)& & & *(green!)\cr
&   & & & & & &   \cr
&   *(red!)& & & & & &*(green!)   \cr
&        \cr
&        \cr
*(red!) &  *(green!) \cr
$$ \cr
*(green!)\cr
\end{ytableau} 
\end{center}
\bigskip

\begin{figure}
\centering
\begin{tikzpicture}[scale=0.7]
\foreach \j in {0,...,19}
  {\path (\j,10.0) node (a) {$\j$};}
\path (0,4) node (a0) {$\circ$}; \path (19,7) node (a19) {$\circ$};  
\foreach \v/\x/\y in
  {a1/1/5, a2/2/4, a3/3/3, a4/4/4, a5/5/3, a6/6/2, a7/7/1, a8/8/2, a9/9/3, a10/10/4, a11/11/5, a12/12/6, a13/13/7, a14/14/6, a15/15/5, a16/16/6, a17/17/7, a18/18/8, b0/0/3, b1/1/4, b2/2/3, b3/3/2, b4/4/3, b5/5/2, b6/6/1, b7/7/0, b8/8/1, b9/9/2, b10/10/3, b11/11/4, b12/12/5, b13/13/6, b14/14/5, b15/15/4, b16/16/5, b17/17/6, b18/18/7,  b19/19/6}
  {\path (\x,\y) node (\v) {$\bullet$};}
\foreach \j in {1,5,9,13,17}
  {\path (\j,-1.2) node {$\vdots$};}
\foreach \t/\h in
  {a1/a2, a2/a3, a4/a5, a5/a6, a6/a7, a13/a14, a14/a15, a18/a19, b1/b2, b2/b3, b4/b5, b5/b6, b6/b7, b13/b14, b14/b15, b18/b19}
  {\path[->,>=latex] (\t) edge[blue,thick] node[black,above right=-2pt] {$x$} (\h);}
\foreach \t/\h in
  {a1/a0, a4/a3, a8/a7, a9/a8, a10/a9, a11/a10, a12/a11, a13/a12, a16/a15, a17/a16, a18/a17, b1/b0, b4/b3, b8/b7, b9/b8, b10/b9, b11/b10, b12/b11, b13/b12, b16/b15, b17/b16, b18/b17}
  {\path[->,>=latex] (\t) edge[blue,thick] node[black,above left =-2pt] {$y$} (\h);}
\end{tikzpicture}
\caption{Example of a module $L_I$}
\label{fig:rim}
\end{figure}
\eat{\begin{figure}[h]	
\centering
\begin{subfigure}[t]{5cm}
\centering
\includegraphics[width=5cm]{jks1}
\caption{}
\label{JKS1}
\end{subfigure}
\caption{\footnotesize JKS module for $L_{\{2,3,5,6,7,14,15,19\}}$}
\end{figure}}
Here the peaks are coloured green while the valleys are coloured $red$.
We also note that the (matrix entry) positions of the peaks are 
$\{(1,11), (3,8),  (6,2), (8,1)\}$ and the positions of the 
valleys are $\{(1,8), (3,2), (6,1)\}$.
\end{example}

Denote the number of peaks by $n_p$ and number of valleys by $n_v$.
\begin{lemma}  \label{lemma:peaks and valleys}
We have $n_p - n_v  = 1$.
\end{lemma}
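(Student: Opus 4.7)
The plan is to translate the peak/valley conditions into arithmetic conditions on the row-length sequence of $\mathcal{Y}_I$, and then compare the resulting counts. Let $r_1\geq r_2\geq\cdots\geq r_k\geq 0$ be the row lengths of $\mathcal{Y}_I$, and extend by the convention $r_{k+1}=0$. Define the set of \emph{strict drops}
\[
D=\{\,i\in\{1,\ldots,k\}:r_i>r_{i+1}\,\}.
\]
Since $\mathcal{Y}_I\neq\emptyset$, the set $D$ is non-empty (any index where the sequence strictly decreases, in particular the largest $i$ with $r_i\geq 1$, lies in $D$).

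First I would verify that the peaks of $\mathcal{Y}_I$ are exactly the boxes $(i,r_i)$ with $i\in D$. Indeed, $(i,j)\in\mathcal{Y}_I$ has no eastern neighbour iff $j=r_i$, and no southern neighbour iff $r_{i+1}<j$; combining these forces $j=r_i$ and $r_{i+1}<r_i$, i.e.\ $i\in D$, and any such $i$ yields a genuine box because $r_i>r_{i+1}\geq 0$ implies $r_i\geq 1$. Thus the map $i\mapsto(i,r_i)$ is a bijection from $D$ onto the set of peaks, so $n_p=|D|$.

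Next I would verify that the valleys of $\mathcal{Y}_I$ are exactly the boxes $(i,r_{i+1})$ with $i\in D$ and $r_{i+1}\geq 1$. Using the characterisation from the introduction, a box $(i,j)$ is a valley iff $(i,j+1),(i+1,j)\in\mathcal{Y}_I$ and $(i+1,j+1)\notin\mathcal{Y}_I$, which in terms of row lengths reads $r_i\geq j+1$, $r_{i+1}\geq j$, and $r_{i+1}<j+1$; these together give $j=r_{i+1}$ and $r_i>r_{i+1}=j$. Requiring $(i,j)$ to be a genuine box of $\mathcal{Y}_I$ adds the condition $j\geq 1$, i.e.\ $r_{i+1}\geq 1$. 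Hence the map $i\mapsto(i,r_{i+1})$ is a bijection from $D'=\{i\in D:r_{i+1}\geq 1\}$ onto the set of valleys, and $n_v=|D'|$.

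To conclude, I would note that because the extended sequence $r_1,\ldots,r_k,r_{k+1}=0$ is non-increasing and $\mathcal{Y}_I$ is non-empty, there is exactly one index in $D$ at which the sequence drops to $0$, namely $i_*=\max\{i:r_i\geq 1\}$. Thus $D=D'\sqcup\{i_*\}$, giving $|D|-|D'|=1$ and $n_p-n_v=1$, as required. I expect no real obstacle here: the entire argument is a direct unpacking of the combinatorial definitions and uses none of the categorical machinery developed earlier in the paper.
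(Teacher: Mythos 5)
Your proof is correct, and it fills in with complete rigor what the paper leaves as a one-line hint. Substantively, the two arguments capture the same combinatorial fact about the rim of the Young diagram, but they organise it differently. The paper's proof attaches to each valley $(i,j)$ the pair of peaks consisting of the rightmost box of row $i$ and the bottom-most box of column $j$, leaving implicit the remaining steps (these two peaks are consecutive when peaks are ordered by row, every consecutive pair arises from exactly one valley, and hence $n_v$ equals the number of consecutive pairs, which is $n_p-1$). Your approach instead parametrises both sets at once by the drop set $D=\{i:r_i>r_{i+1}\}$ of the row-length sequence: peaks are the $(i,r_i)$ with $i\in D$, valleys are the $(i,r_{i+1})$ with $i\in D$ and $r_{i+1}\geq 1$, and the discrepancy is exactly the single index $i_*=\max\{i:r_i\geq 1\}$, which is a drop with $r_{i_*+1}=0$. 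What you gain is a self-contained verification with no residual gaps and a transparent reason for the $+1$ (it is literally $|D|-|D'|$ with $D\setminus D'=\{i_*\}$); what the paper's phrasing gains is a geometric picture that makes the interlacing of peaks and valleys along the rim visible. Both are elementary and use no categorical input, as you note.
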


\begin{proof}
Let $v$ be a valley in position $(i,j)$. Then the bottom-most box in the $j$th column 
and the rightmost box in the $i$th row are both peaks.
\end{proof}

A subset $I$ of the set $\{1, \ldots, n\}$ is a {\em cyclic interval of length $k$} if it is in
the orbit of $\{1, \ldots, k\}$ under the cyclic group action generated by the
permutation mapping $i$ to $i+1$ for $i<n$ and $n$ to $1$.
\begin{lemma} Up to isomorphism, the indecomposable projectives in $\cm(B)$ 
are the modules $L_I$, where $I$ is a cyclic interval of length $k$.
\end{lemma}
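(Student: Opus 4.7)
The plan is to identify the indecomposable projective-injective objects of $\cm(B)$ explicitly as the modules $P_i = e_i B$ for $i \in \Z/n\Z$, and then to match each $P_i$ with the rank-one module $L_{I_i}$ for a specific cyclic interval $I_i$ of length $k$.

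Since $B$ is $1$-Iwanaga--Gorenstein and $\cm(B)$ is its associated Frobenius category, the projective-injective indecomposables of $\cm(B)$ coincide with the indecomposable summands of the regular module $B_B$, namely the $P_i = e_i B$. The first step is to verify that each $P_i$ is a rank-one Cohen--Macaulay module in the sense of Example~\ref{ex:rank one}, i.e.\ that $P_i e_j$ is free of rank one over $Z = \C[[t]]$ for every $j$. This follows from a direct inspection of $B$: using the preprojective relations $xy = yx = t$ together with $x^k = y^{n-k}$, every path from $i$ to $j$ in the double cyclic quiver can be normalized as $t^m \gamma_{ij}$, where $\gamma_{ij}$ denotes the chosen shorter path (clockwise if the clockwise distance from $i$ to $j$ is at most $k$, counterclockwise otherwise, with either choice valid when the distance is exactly $k$ by the relation $x^k = y^{n-k}$).

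With the generators $\gamma_{ij}$ fixed, the second step is to compute the scalar ($1$ or $t$) by which each arrow $x_j : j \to j+1$ acts from $P_i e_j$ to $P_i e_{j+1}$ under right multiplication, and analogously for the $y$-arrows. When $j$ lies on the clockwise arc from $i$ of length less than $k$, right multiplication by $x_j$ simply extends the clockwise generator and the scalar is $1$. For the remaining values of $j$, the image $\gamma_{ij} \cdot x_j$ lies outside the shorter clockwise arc and must be rewritten via $x^k = y^{n-k}$ and a loop $yx = t$, producing the scalar $t$. Under the JKS bijection from Prop.~5.2 of \cite{JensenKingSu16}, this identifies $P_i$ with $L_{I_i}$, where $I_i$ is a cyclic interval of length $k$ whose starting point depends on $i$.

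Finally, since distinct $i$ produce distinct cyclic intervals and there are exactly $n$ cyclic intervals of length $k$ in $\{1, \ldots, n\}$, matching the $n$ indecomposable projectives $P_i$, one obtains the claimed bijection. The principal obstacle is the bookkeeping in the middle step: one must carefully track which of $x^k = y^{n-k}$ and $xy = yx = t$ gets invoked when applying each arrow to the preferred generators, especially at the ``turning point'' where the clockwise and counterclockwise descriptions of $\gamma_{ij}$ meet.
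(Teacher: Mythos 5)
Your proposal is correct and follows essentially the same route as the paper: the paper's proof simply cites Jensen--King--Su for the identification of the indecomposable projectives as $e_iB$ and then asserts that matching them to the $L_I$ for cyclic intervals $I$ of length $k$ is ``not hard to check,'' which is precisely the verification you sketch in more detail (checking rank one over $Z$ and computing the arrow actions on the preferred generators).
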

\begin{proof}
We know from \cite{JensenKingSu16} that the indecomposable projectives are of
the form $e_jB$ and it is not hard to check that these are the modules $L_I$
associated with cyclic intervals $I$ of length $k$.
\end{proof}
\begin{remark} Let $I\in I(k,n)$. Then the module $L_I$ is projective in $\cm(B)$ if and only if the Young diagram ${\cal Y}_I$ is empty or is a rectangle with $k$ rows or $n-k$ columns.
\end{remark}

\begin{remark} \label{rk:grading}
Let $I\in I(k,n)$. As we have recalled in \begin{magenta}Section\end{magenta}~\ref{sec:Grassmannian},  the module $L_I$ 
admits a $\Ga^\vee$-grading unique up to a multiple of the degree of $t$. We define the $\Ga^\vee$-graded module 
$\tilde{L}_I$ to be $L_I$ endowed with the unique $\Ga^\vee$-grading such that $L_I \cdot e_0$ is generated over 
$Z=\C[[t]]$ in degree~$0$.
\end{remark}

\begin{notation} We write $T_\emptyset$ for the projective $L_I$, where $I=(1,2, \ldots, k)$.
For $1\leq p\leq k$ and $1\leq q\leq n-k$, we write $T_{p,q}$ for the module $L_I$,
where $I$ is determined by the condition that $\cy_I$ rectangular with a unique
peak at $(p,q)$. We write $t_\emptyset$ respectively $t_{p,q}$ for the
canonical generator of $T_\emptyset$ respectively $T_{p,q}$.
\end{notation}

Let $T$ be the direct sum of $T_\emptyset$ and the $T_{p,q}$ for $1\leq p\leq k$ and
$1\leq q\leq n-k$. It follows from Prop.~5.6 and Remark~5.7 of \cite{JensenKingSu16}
that $T$ is a cluster-tilting object in $\cm(B)$. Its associated quiver is the quiver
$\boxslash_{k,n}$ of the triangular seed of Theorem~\ref{ini.c}. We write $m$
for the number of vertices of this quiver so that $g$-vectors with respect
to the triangular seed are elements of $\Z^m$.

\begin{theorem} \label{thm: g-vectors}
Let $I \in I(k,n)$. If $\mathcal{Y}_I$ is non empty, let $P$ denote the set of peaks and $V$ 
denote the set of valleys appearing in ${\cal Y}_I$. Then we have
\begin {itemize}
\item
If $I = (1,2,\ldots, k)$, then the $g$-vector of the Pl\"ucker coordinate $p_I$
with respect to the triangular seed of Theorem~\ref{ini.c}  is the basis vector $e_\emptyset$ of $\Z^m$ 
associated with the exceptional frozen vertex of $\boxslash_{k,n}$.
\item
If $I \neq (1,2,\ldots, k)$, then the $g$-vector of the Pl\"ucker coordinate $p_I$
with respect to the triangular seed of Theorem~\ref{ini.c} is given by
\[
\sum_{p \in P} e_p - \sum_{v \in V} e_v,
\]
where $e_p$ denotes the standard basis vector of $\Z^m$ associated with the vertex $p$ of
the quiver $\boxslash_{k,n}$. 
\end{itemize}
\end{theorem}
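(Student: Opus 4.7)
The plan is to compute the index $\ind_{\ct}(L_I) \in K_0(\ct)$ of the rank-one Cohen--Macaulay module $L_I$ with respect to $\ct = \add(T)$, since by Theorem~\ref{thm:index vs g-vector} (applied to the reachable rigid object $L_I$) this index, viewed in $\Z^m$ via the natural basis of $K_0(\ct)$ given by the indecomposable summands of $T$, coincides with the extended $g$-vector of the Pl\"ucker coordinate $p_I$. If $I = (1,2,\ldots,k)$, then $L_I = T_\emptyset$ already belongs to $\ct$, so the trivial conflation $0 \rightarrowtail T_\emptyset \twoheadrightarrow L_I$ yields $\ind_{\ct}(L_I) = e_\emptyset$, settling the first bullet.

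For $I$ with non-empty Young diagram, the plan is to exhibit an explicit conflation
\[
\begin{tikzcd}
\bigoplus_{v \in V} T_v \ar[r, tail, "\delta"] & \bigoplus_{p \in P} T_p \ar[r, two heads, "\epsilon"] & L_I
\end{tikzcd}
\]
in $\cm(B)$ and to identify it as a minimal right $\ct$-approximation of $L_I$. The map $\epsilon$ is assembled from morphisms $\epsilon_p \colon T_p \to L_I$, one for each peak $p = (p_1,p_2) \in P$: since $\cy_{T_p}$ is the $p_1 \times p_2$ rectangle sitting in the upper-left corner of $\cy_I$ with bottom-right corner at $p$, the rim of $\cy_{T_p}$ agrees with the rim of $\cy_I$ in a neighbourhood of $p$, which yields $\epsilon_p$ at the level of the graded modules of Remark~\ref{rk:grading} by sending the canonical generator $t_p$ to the rim element of $L_I$ at position $p$. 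Dually, the map $\delta$ is assembled from $\delta_v \colon T_v \to T_{p^{\uparrow}} \oplus T_{p^{\downarrow}}$, where $p^{\uparrow}$ and $p^{\downarrow}$ denote the two peaks flanking the valley $v \in V$; such a map exists because $\cy_{T_v}$ is contained in both $\cy_{T_{p^{\uparrow}}}$ and $\cy_{T_{p^{\downarrow}}}$.

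The heart of the argument is the verification that the above sequence is a conflation in $\cm(B)$. Since each of $L_I$, $T_p$, $T_v$ is free of rank one over $Z = \C[[t]]$ on every idempotent component $e_j$, exactness reduces to a pointwise combinatorial check at each vertex $j$: surjectivity of $\epsilon$ holds because the rims of the $\cy_{T_p}$, $p \in P$, collectively cover the rim of $\cy_I$, and the overlap of two adjacent peak contributions at each valley is exactly cancelled by the corresponding $\delta_v$. Once exactness is in hand, the right $\ct$-approximation property of $\epsilon$ follows by applying $\cm(B)(T_i,-)$ to the conflation and using $\Ext^1_{\cm(B)}(T_i, T_v) = 0$ for each indecomposable summand $T_i$ of $T$ (rigidity of $T$). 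Minimality is then automatic from Proposition~\ref{prop: rigid factor in common}, since peaks and valleys occupy disjoint positions in $\cy_I$, and hence $\bigoplus_{p \in P} T_p$ and $\bigoplus_{v \in V} T_v$ share no common indecomposable summand. Theorem~\ref{thm:index vs g-vector} then delivers $\ind_{\ct}(L_I) = \sum_{p \in P} e_p - \sum_{v \in V} e_v$. The main technical obstacle is precisely this combinatorial verification of exactness on each idempotent component, which requires careful bookkeeping of powers of $t$ along the rim of $L_I$; everything else is formal or follows from the general results of Sections~\ref{s.Introduction-1} and~\ref{s.Introduction 2}.
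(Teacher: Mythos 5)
Your approach is correct, and it is genuinely different from the paper's: it is in fact the alternative proof alluded to in the \emph{Related work} section (Lemma~4.9 of the first arXiv version, in the spirit of Baur--Bogdanic). The paper passes to the stable category $\ul{\cm}(B)$, identifies it via~(\ref{eq: cluster to stable}) with the cluster category of $B_1\ten B_2$, lifts $\pi(L_I)$ to a $B_1\ten B_2$-module $M_I$ of projective dimension at most one, and computes the terms of a minimal projective resolution of $M_I$ by analysing the bicomplex computing $\RHom(M_I,S_i\ten S_j)$ in six cases according to the position of $(i,j)$ relative to $\supp(M_I)$; the projective-injective summand of $T_0$ is then recovered separately via Remark~\ref{rk:no projective summands}. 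You instead build an explicit conflation $\bigoplus_{v\in V}T_v\rightarrowtail\bigoplus_{p\in P}T_p\twoheadrightarrow L_I$ directly in $\cm(B)$ and read off the index. This stays entirely inside the Frobenius category, avoids the detour through the square-product algebra and the derived equivalence, and handles frozen and non-frozen peaks uniformly, which is a genuine advantage. The price is that the whole weight of the proof falls on the combinatorial verification that your three-term sequence is fibrewise exact over $Z=\C[[t]]$ at every vertex of the JKS quiver, including the choice of signs in $\delta$ (the two components $T_v\to T_{p^\uparrow}$ and $T_v\to T_{p^\downarrow}$ must be given opposite signs, or else $\epsilon\circ\delta\neq 0$): you call this ``careful bookkeeping'' but do not carry it out, and without it the argument remains a plan rather than a proof. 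One small inaccuracy: you cite Proposition~\ref{prop: rigid factor in common} to conclude minimality of the approximation, but that proposition runs the other way (rigidity of $X$ implies the two outer terms of a minimal approximation share no summand, not conversely); fortunately minimality is irrelevant to your argument, since $\ind_\ct$ is well-defined from any conflation with outer terms in $\ct$.
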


\section{Proof of Theorem~\ref{thm: g-vectors}}

\subsection{Reminder on the stable category of Cohen--Macaulay modules}
\label{ss: Reminder on the stable category of Cohen--Macaulay modules}
Let us recall the construction of the Jacobian algebra of a quiver with potential.
Let $Q$ be a finite quiver without loops nor $2$-cycles. Let $\hat{\mathbb{C}Q}$ be the completion of the path algebra $\mathbb{C}Q$ at the ideal generated by the arrows of $Q$. So $\hat{\mathbb{C}Q}$ is a topological algebra and the paths of $Q$ form a topological basis so that the underlying vector space is 
\[ \prod_{p {\text{ }} path} \mathbb{C} p
\]
and the multiplication is induced by the composition of paths. Let $C$ denote the closure of the commutator subspace
$[\hat{\mathbb{C}Q},\hat{\mathbb{C}Q}]$. A {\em potential $W$} on $Q$ is an element of $\hat{\mathbb{C}Q}/C$. The pair 
$(Q,W)$ is called a {\em quiver with potential}. For each arrow $a$ of $Q$, the {\em cyclic derivative with respect to a} is 
the unique continuous $\mathbb{C}$-linear map 
\[ 
\partial_a : \hat{\mathbb{C}Q}/C \longrightarrow \hat{\mathbb{C}Q}
\]
which takes the class of a path $p$ to the sum 
\[ \sum_{p = uav} vu \ko
\]
where $p$ ranges over all paths obtained by concatenations of paths $u,a,v$ where $u$ and $v$ are of length $\geq 0$. The {\em Jacobian algebra $J(Q,W)$} of a quiver with potential $(Q,W)$ is the quotient of the algebra  $\hat{\mathbb{C}Q}$ by the closure of the ideal generated by the cyclic derivatives $\partial_a(W)$, where $a \in Q_1$. 

As in section \ref{sec:Grassmannian}, let us take $\Pi$ to be the completed preprojective algebra of type $\tilde{A}_{n-1}$. 
Let $B$ denote the quotient of $\Pi$ by the $n$ relations $x^k-y^{n-k}$. Let $\cm(B)$ denote the category of finitely generated (maximal) Cohen-Macauley $B$-modules. Let $T$ be the cluster-tilting object $T_{\cal P}$ as explained in section \ref{sec:Grassmannian}, where ${\cal P}$ is the $k$-subset of Theorem~\ref{ini.c}. Let ${\cal{C}}$ denote the stable category of $\cm(B)$. It is Hom-finite and $2$-Calabi--Yau. Let $Q$ be the non frozen part of the quiver of theorem \ref{ini.c}. 
For $k=4$ and $n=9$, the quiver $Q$ thus looks like 
\[ \begin{tikzcd}
	\bullet & \bullet & \bullet & \bullet \\
	\bullet & \bullet & \bullet & \bullet \\
	\bullet & \bullet & \bullet & \bullet
	\arrow[from=1-1, to=1-2]
	\arrow[from=1-2, to=1-3]
	\arrow[from=1-3, to=1-4]
	\arrow[from=2-1, to=1-1]
	\arrow[from=3-1, to=2-1]
	\arrow[from=3-2, to=2-2]
	\arrow[from=3-3, to=2-3]
	\arrow[from=3-4, to=2-4]
	\arrow[from=2-4, to=1-4]
	\arrow[from=2-3, to=1-3]
	\arrow[from=2-2, to=1-2]
	\arrow[from=2-1, to=2-2]
	\arrow[from=3-1, to=3-2]
	\arrow[from=3-2, to=3-3]
	\arrow[from=3-3, to=3-4]
	\arrow[from=2-2, to=2-3]
	\arrow[from=2-3, to=2-4]
	\arrow[from=1-2, to=2-1]
	\arrow[from=1-3, to=2-2]
	\arrow[from=1-4, to=2-3]
	\arrow[from=2-2, to=3-1]
	\arrow[from=2-3, to=3-2]
	\arrow[from=2-4, to=3-3]
\end{tikzcd}.
\]
The potential $W$ is obtained as
\[ W = \sum \begin{tikzcd}[sep=small]
	& \bullet \\
	\bullet && \bullet
	\arrow[from=2-1, to=2-3]
	\arrow[from=2-3, to=1-2]
	\arrow[from=1-2, to=2-1]
\end{tikzcd} - \sum \begin{tikzcd}[sep=small]
	& \bullet \\
	\bullet && \bullet
	\arrow[from=2-1, to=1-2]
	\arrow[from=1-2, to=2-3]
	\arrow[from=2-3, to=2-1]
\end{tikzcd},
\]
where the first sum ranges over the positively oriented $3$-cycles and the second
sum over the negatively oriented $3$-cycles of $Q$.
The Jacobian algebra $J = J(Q,W)$ is isomorphic to the stable 
endomorphism algebra $\underline{\mathrm{End}}_B(T)$, cf.~\cite{JensenKingSu16}. 
The isomorphism $J \iso \underline{\mathrm{End}}_B(T)$ is in fact induced by
a triangle equivalence 
\begin{equation} \label{eq: cluster to stable}
\cc_{B_1 \ten B_2} \iso \cc \ko
\end{equation}
where $B_1 = \mathbb{C} \vec{A}_{n-k-1}$, $B_2 =\mathbb{C} \vec{A}_{k-1}$, the
category $\cc_{B_1 \ten B_2}$ is the (generalized) cluster category of $B_1 \ten B_2$
in the sense of Amiot \cite{Amiot09} and $\vec{A}_{m}$ is the equioriented quiver
of type $A$ with $m$ vertices. Recall that the cluster category $\cc_{B_1 \ten B_2}$ is
defined as the triangulated hull of the orbit category
\[
\cd^b(\mod B_1 \ten B_2)/(S^{-1} \Sigma^2)^\Z,
\]
where $S$ is the Serre functor of $\cd^b(\mod B_1 \ten B_2)$. In particular, we have
a canonical triangle functor 
\begin{equation} \label{eq: derived to cluster}
\cd^b(\mod B_1 \ten B_2) \to \cc_{B_1\ten B_2}
\end{equation}
which takes the free module $B_1 \ten B_2$ to a canonical cluster-tilting object
of $\cc_{B_1\ten B_2}$, which, under the equivalence (\ref{eq: cluster to stable}),
corresponds to the chosen cluster-tilting object $T$ of the stable category $\cc$
of Cohen--Macaulay modules. This yields an algebra morphism
\[
B_1 \ten B_2 \to J(Q,W)
\]
inducing a morphism from the quiver of 
$B_1 \ten B_2$ to the quiver $Q$, namely the inclusion of the subquiver
with the same vertices and whose set of arrows consists of all the
horizontal and vertical arrows of $Q$ as in the following example 
where $k=4$ and $n=9$
\[ \begin{tikzcd}
	\bullet & \bullet & \bullet & \bullet \\
	\bullet & \bullet & \bullet & \bullet \\
	\bullet & \bullet & \bullet & \bullet
	\arrow[from=1-1, to=1-2]
	\arrow[from=1-2, to=1-3]
	\arrow[from=1-3, to=1-4]
	\arrow[from=2-1, to=1-1]
	\arrow[from=3-1, to=2-1]
	\arrow[from=3-2, to=2-2]
	\arrow[from=3-3, to=2-3]
	\arrow[from=3-4, to=2-4]
	\arrow[from=2-4, to=1-4]
	\arrow[from=2-3, to=1-3]
	\arrow[from=2-2, to=1-2]
	\arrow[from=2-1, to=2-2]
	\arrow[from=3-1, to=3-2]
	\arrow[from=3-2, to=3-3]
	\arrow[from=3-3, to=3-4]
	\arrow[from=2-2, to=2-3]
	\arrow[from=2-3, to=2-4]
\end{tikzcd} \quad\quad\xhookrightarrow\quad\quad
\begin{tikzcd}
	\bullet & \bullet & \bullet & \bullet \\
	\bullet & \bullet & \bullet & \bullet \\
	\bullet & \bullet & \bullet & \bullet
	\arrow[from=1-1, to=1-2]
	\arrow[from=1-2, to=1-3]
	\arrow[from=1-3, to=1-4]
	\arrow[from=2-1, to=1-1]
	\arrow[from=3-1, to=2-1]
	\arrow[from=3-2, to=2-2]
	\arrow[from=3-3, to=2-3]
	\arrow[from=3-4, to=2-4]
	\arrow[from=2-4, to=1-4]
	\arrow[from=2-3, to=1-3]
	\arrow[from=2-2, to=1-2]
	\arrow[from=2-1, to=2-2]
	\arrow[from=3-1, to=3-2]
	\arrow[from=3-2, to=3-3]
	\arrow[from=3-3, to=3-4]
	\arrow[from=2-2, to=2-3]
	\arrow[from=2-3, to=2-4]
	\arrow[from=1-2, to=2-1]
	\arrow[from=1-3, to=2-2]
	\arrow[from=1-4, to=2-3]
	\arrow[from=2-2, to=3-1]
	\arrow[from=2-3, to=3-2]
	\arrow[from=2-4, to=3-3]
\end{tikzcd}.
\]
Since the functor (\ref{eq: derived to cluster}) is a triangle functor, the
composition
\[
\mod(B_1 \ten B_2) \to \cd^b(\mod B_1 \ten B_2) \to \cc_{B_1 \ten B_2} \to \cc \ko
\]
henceforth denoted by $\Phi$, 
takes short exact sequence to triangles. Moreover, since this functor takes
$B_1 \ten B_2$ to $T$, it takes projective resolutions
\[
\begin{tikzcd}
0 \arrow{r} & P_1 \arrow{r} & P_0 \arrow{r} & M \arrow{r} & 0
\end{tikzcd}
\]
to triangles
\[
\begin{tikzcd}
T_1 \arrow{r} & T_0 \arrow{r} & \Phi M \arrow{r} &  \Sigma T_1
\end{tikzcd}
\]
with $T_0$ and $T_1$ belonging to $\add(T)$. Thus, {\em if $M$ is
of projective dimension $\leq 1$}, we can read
off the (stable) index of $\Phi M$ with respect to $T$ from a
projective resolution of $M$. We will see that in $\cc$, each
rank one module becomes isomorphic to the image $\Phi M$ of
a $B_1\ten B_2$-module of projective dimension~$\leq 1$. 

\subsection{Proof of Theorem~\ref{thm: g-vectors}} We keep the notations
and assumptions of the preceding section. 
If $I$ is a cyclic interval, then $L_I$ is the indecomposable projective which occurs as the
direct summand $T_{p,q}$ of $T$, where $(p,q)$ is the unique peak of ${\cal{Y}}_I$. 
So $p_I$ has the $g$-vector $e_{p,q}$ as claimed. 

From now on, we suppose
that $I$ is not a cyclic interval. Thanks to Theorem~\ref{thm:index vs g-vector},
it suffices to determine the index of $L_I$ with respect to the cluster-tilting
object $T$ in $\cm(B)$. Let $\pi: \cm(B) \to \ul{\cm}(B)$ be the projection
functor onto the stable category of Cohen--Macaulay modules over $B$.
We will first determine the index of $\pi(L_I)$ with respect
to the cluster-tilting object $\pi(T)$ of $\cc=\ul{\cm}(B)$. This will yield
the `non-frozen' part of the $g$-vector of $L_I$. We will then determine the `frozen part'.

Under the equivalence (\ref{eq: cluster to stable})  between the stable category of Cohen--Macaulay
modules and the cluster category, the object $\pi(L_I)$ in fact corresponds
to the image $\Phi M_I$ of the $k \vec{A}_{k-1} \ten k\vec{A}_{n-k-1}$-module
$M_I$ obtained as the submodule of the projective $P_{k-1} \ten P_{n-k-1}$
generated by the components in degrees $(p,q)$, where $(p,q)$
ranges through the peaks of the Young diagram $\mathcal{Y}_I$ 
satisfying $p<k$ and $q<n-k$ (which means that $T_{p,q}$ is not
projective). Since $B_1\ten B_2$ is a finite-dimensional algebra, the multiplicity
of an indecomposable projective $P_i \ten P_j$ in the $l$th component,
$0\leq l \leq 2$,  of the minimal projective resolution of $M_I$ equals the dimension of
\[
\Ext^{l}_{B_1 \ten B_2}(M_I, S_i \ten S_j).
\] 
Indeed, this follows from the fact that $S_i \ten S_j$ is the head of the indecomposable projective $P_i \ten P_j$.
To compute these extension groups, we use a minimal injective resolution of
$S_i\ten S_j$. For this, we tensor the minimal injective resolution of $S_i$  given by
\[
\begin{tikzcd} 0 & {S_i} & {I_{i}} & {I_{i+1}} & 0, \arrow[from=1-1, to=1-2] \arrow[from=1-2, to=1-3] \arrow["{\alpha_i}", from=1-3, to=1-4] \arrow[from=1-4, to=1-5] \end{tikzcd}
\]
where we put $I_{i+1}=0$ if $i=k-1$,
with the corresponding minimal injective resolution of $S_j$ to obtain the minimal
injective resolution as the total complex of the bicomplex
\[
\begin{tikzcd} && {I_{i}\otimes I_{j+1}} && {I_{i+1}\otimes I_{j+1}} \\ && {} \\ {} && {I_i \otimes I_j} && {I_{i+1}\otimes I_{j}} \arrow["{\alpha_i \otimes 1}", from=1-3, to=1-5] \arrow["{1\otimes\alpha_j}", from=3-3, to=1-3] \arrow["{\alpha_i \otimes 1}", from=3-3, to=3-5] \arrow["{1\otimes\alpha_j}"', from=3-5, to=1-5] \end{tikzcd}
\]
where the lower left corner is in bidegree $(0,0)$. Applying $\Hom(M_I,?)$ to this injective resolution,
we find that the complex $\RHom(M_I, S_i\ten S_j)$ is given by 
the total complex of the bicomplex
\[
\begin{tikzcd} {DM_I(i,j+1)} && {DM_I(i+1,j+1)} \\ \\ {DM_I(i,j)} && {DM_I(i+1,j)} \arrow[from=1-1, to=1-3] \arrow[from=3-1, to=1-1] \arrow[from=3-1, to=3-3] \arrow[from=3-3, to=1-3] \end{tikzcd}
\]
with $DM_I(i,j)$ in bidegree $(0,0)$. This is $k$-dual to the total complex of the bicomplex
\[
\begin{tikzcd} {M_I(i,j+1)} && {M_I(i+1,j+1)} \\ \\ {M_I(i,j)} && {M_I(i+1,j)} \arrow[from=1-1, to=3-1] \arrow[from=1-3, to=1-1] \arrow[from=1-3, to=3-3] \arrow[from=3-3, to=3-1] \end{tikzcd}
\]
with $M_I(i,j)$ in bidegree $(0,0)$ (and each arrow of cohomological degree $1$). Notice that each
of the four vector spaces in this diagram is of dimension at most $1$ and that each of the
four linear maps is injective. In order to compute the homology of the corresponding
total complex, we need to distinguish cases according to the position of $(i,j)$ with
respect to the support of $M_I$ corresponding to the shaded region in the
following picture
\[
\begin{tikzpicture}[scale=0.8]
  \draw (0,0) rectangle (7,5);

  \node at (3.5,-0.7) {\(n-k-1\)};
  \node[rotate=90] at (-0.7,2.5) {\(k-1\)};

  \draw[very thick] (0,5) -- (0,3.5) -- (1,3.5) -- (1,2.5) -- (2.5,2.5)
                    -- (2.5,1.5) -- (5,1.5) -- (5,0);

  \begin{scope}
    \clip (0,5) -- (0,3.5) -- (1,3.5) -- (1,2.5) -- (2.5,2.5)
               -- (2.5,1.5) -- (5,1.5) -- (5,0) -- (0,0) -- cycle;
    \foreach \x in {-5, ..., 12}
      \draw[gray] (\x,-1) -- (\x+7,7);
  \end{scope}
  
  \node at (1.5,1) {\(\mathrm{supp}(M_I)\)};
\end{tikzpicture}
\]
Six distinct cases arise depending on whether $(i,j)$ 
\begin{enumerate}
\item does not belong to $\supp(M_I)$,
\item is a peak,
\item is a valley,
\item is an inner vertex of a vertical boundary segment,
\item is an inner vertex of a horizontal boundary segment,
\item is in the support but not on its NE-boundary.
\end{enumerate}
The homology of the total complex does not vanish only if
$(i,j)$ is a peak or a valley. If it is a peak, then homology is
one-dimensional and concentrated in degree $0$; if it is
a valley, then homology is one-dimensional and concentrated
in (cohomological) degree $-1$. It follows that $M_I$ has
a minimal projective resolution of the form
\[
\begin{tikzcd}
0 \arrow{r} & P_1 \arrow{r} & P_0 \arrow{r} & M_{I} \arrow{r} & 0 \ko
\end{tikzcd}
\]
where $P_0$ is the direct sum of the indecomposable projectives
$P_p$, where $p$ runs through the peaks, and $P_1$ is the
direct sum of the indecomposable projectives $P_v$, where
$v$ runs through the valleys. By the remarks at the end of
section~\ref{ss: Reminder on the stable category of Cohen--Macaulay modules},
this implies that the `non-frozen' part of the $g$-vector of $L_I$ is given by
\[
\sum_{p \in P'} e_p - \sum_{v \in V} e_v,
\]
where $P'$ is the set of peaks $(p,q)$ of the Young diagram $\mathcal{Y}_I$ 
satisfying $p<n-k$ and $q<k$. By Remark~\ref{rk:no projective summands}, it follows that 
there is a conflation of $\cm(B)$ of the form
\[
\begin{tikzcd}
0 \arrow{r} & T'' \arrow{r} & T'\oplus P \arrow{r}{[p_1, p_2]} & L_I \arrow{r} & 0 \ko
\end{tikzcd}
\]
where $P$ is projective and $T'$ resp. $T''$ is the direct sum of the  $T_x$ associated with
the peaks $x\in P'$ resp. the valleys $x\in V$. We claim that $p_2: P \to L_I$ is the natural
morphism $p_2': \bigoplus_{x \in P''} T_x \to L_I$, where $P''$ is the set of peaks
$(p,q)$ such that $p=k$ or $q=n-k$. Indeed, it is easy to see that the morphism
\[
[p_1, p_2']: \bigoplus_{x\in P'} T_x \oplus \bigoplus_{x\in P''} T_x \to L_I
\]
is a surjective, radical $\add(T)$-approximation of $L_I$, which shows the claim.

\begin{example} Let $I$ be as in \ref{example819}. The $g$-vector of $p_I$ with respect to the triangle seed is given by 
\[ (e_{I_1} + e_{I_2} + e_{I_3} + e_{I_4}) - (e_{I_5}+ e_{I_6}+e_{I_7}) 
\]
where 
\[
I_1 = (1,2,3,4,5,6,7,19) \quad I_2 = (1,2,3,4,5,14,15,16) 
\]
\[
I_3 = (1,2,5,6,7,8,9,10) \quad I_4 = (2,3,4,5,6,7,8,9)
\]
\[ I_5 = (1,2,3,4,5,6,7,16) \quad I_6 = (1,2,3,4,5,8,9,10)
\]
\[ I_7 = (1,2,4,5,6,7,8,9).
\]

\end{example}
\section{Donaldson--Thomas invariants}
\begin{magenta}
In this section, we present the combinatorial and representation-theoretic framework for the study of
Donaldson–Thomas (DT) transformations and the associated $DT$–$F$–polyno\-mials.
In subsection~\ref{ss:combinatorial-construction}, we recall reddening and maximal green
sequences, the permutation $\sigma$ at the end of a reddening sequence, and the
definition of the $DT$–transformation and its associated $F$-polynomials.  Using
Nagao’s description, we reduce the computation of $DTF_Q$ to that of the $F$-polynomials of
the indecomposable injectives over the Jacobian algebra, and then use a natural grading
to identify their graded submodules with right ideals (3D Young diagrams) in an
explicit poset.  We end the section with a detailed example illustrating our method.
\end{magenta}

\subsection{Combinatorial construction} \label{ss:combinatorial-construction}
Let $Q$ be a finite quiver without loops nor $2$-cycles (and without frozen vertices). We suppose that the set of vertices of $Q$ is
the set of integers $\{1, \ldots\ , r\}$.
As in section~\ref{ss:extended g-vectors}, we can define a vertex $i$ of a quiver $Q'$ obtained from $Q$ by iterated mutation to be {\em green} if the corresponding $c$-vector has all non-negative coefficients, otherwise it is defined to be {\em red}. 
Let $\textbf{k} = (k_1,k_2,\ldots,k_N)$ be a sequence of vertices of $Q$. For $1 \leq s \leq N$, we define $Q(\textbf{k},s)$ to be the
mutated quiver
\[
\mu_{k_{s}}\ldots\mu_{k_{2}} \mu_{k_{1}}(Q)
\]
and for $s =0$,  we define $Q(\textbf{k},s)$ to be the original quiver $Q$. We let $\mu_{\textbf{k}}(Q) = Q(\textbf{k},N)$. If the final quiver  
$\mu_{\textbf{k}}(Q)$ has all its vertices red, we call $\textbf{k}$ a {\em reddening sequence}. 
A sequence $\textbf{k}$ is {\em green} if for each $0 \leq s \leq N-1$, the vertex $k_{s+1}$ is green in the partially mutated quiver  
$Q(\textbf{k},s)$. It is said to be {\em maximal green} if it is green and all the mutable vertices of the final quiver $\mu_{\textbf{k}}(Q)$ are red.  

Let $x = (x_1, x_2, \ldots, x_r)$ be the sequence of the initial cluster variables and $(Q,{x})$ the initial seed associated with $Q$. As in 
section~\ref{ss:from-ice-quivers}, let $\mathbb{T}_r$ be the $r$-regular tree with root $t_0$. We suppose that
$Q$ admits a reddening sequence $\textbf{k}$ of length $N$. 
Let $t'$ be the vertex of $\mathbb{T}_r$ reached from $t_0$ by walking along the edges labeled $k_1$, \ldots,\ $k_N$. Then we know from Prop.~2.10 of \cite{BruestleDupontPerotin14} that there exists a unique permutation $\sigma$ of $Q_0$ such that  we have
\[ 
g_i(t') = -e_{\sigma(i)}
\]
for $1 \leq i \leq r$. 
Let $Q'=Q(t')$ and $x'_i=x_i(t')$, $1 \leq i \leq r$. Then $\sigma^{-1}$ is an isomorphism from the initial quiver $Q$ to the quiver $Q'$. It is known that the following data do not depend on the choice of reddening sequence $\textbf{k}$:
\begin{itemize} 
\item the sequence of cluster variables $x_{\sigma^{-1}(i)}(t')$, $1 \leq i \leq r$,
\item the sequence $F$-polynomials $F_{\sigma^{-1}(i)}(t')$, $1\leq i\leq r$, whose
definition we recall below.
\end{itemize}
The {\em Donaldson--Thomas transformation} is the unique automorphism  
\[ 
DT_Q : {\cal{A}}_{Q} \iso {\cal{A}}_{Q}
\]
taking $x_i$ to $x'_{\sigma^{-1}(i)}$.
We recall from Theorem~4.1 of \cite{Keller17} that the $DT$ transformation (if it exists) does not depend on the choice of the reddening sequence $\textbf{k}$. 

We recall the definition of the {\em $F$-polynomials} from \cite{fomin2007cluster}. Let $B$ denote the antisymmetric matrix associated with the quiver $Q$ as described in section \ref{s.Introduction-1}. Let $y_j = {\displaystyle \prod_{i=1}^r x_i^{b_{ij}}}$, where $j$ ranges from $1$ to $r$. To each $t \in \mathbb{T}_r$ we associate a sequence of polynomials $F(t) = (F_1(t), F_2(t), \ldots, F_r(t)) \in \mathbb{Q}[y_1, y_2, \ldots, y_r]^r$ which is defined recursively via:
\begin{itemize}
\item[(i)] $F(t_0) = (1,1,\ldots,1)$.
\item[(ii)] If there is an edge labeled $k$ between $t$ and $t'$ then
\[ \begin{cases}
F_i(t') = F_i(t) \text{  if  } i \neq k \\
F_k(t)F_k(t') = {\displaystyle \prod_{j' \rightarrow k} y_j \prod_{i\rightarrow k}F_j(t) + \prod_{k \rightarrow j'}y_j \prod_{k \rightarrow i} F_i(t)} \text{  otherwise}. 
\end{cases}
\]
\end{itemize}
As above, if the quiver $Q$ admits a reddening sequence, then we define 
\[ DTF_Q = \sigma(F(t')) \in \mathbb{N}[y_1,y_2, \ldots, y_r]^r.
\]
We call the terms of the sequence $DTF_Q$ the {\em $DTF$-polynomials} of the quiver $Q$
and we write $DTF_i$ for the $i$th term of the sequence, $1 \leq i \leq r$.

\begin{example} \label{ex:combDT}
We work with the quiver $Q = Q(t_0) = A_3:1 \rightarrow 2 \rightarrow 3$. We colour the vertices of the initial seed at $t_0$ green. 
\[\begin{tikzcd}
	\color{green}1 & \color{green}2 & \color{green}3
	\arrow[from=1-1, to=1-2]
	\arrow[from=1-2, to=1-3]
\end{tikzcd}.\]
The corresponding $g$-vectors are $g_i(t_0) = e_i$ for $1 \leq i \leq 3$. 

We consider the sequence $\mu_1\mu_2\mu_1\mu_3\mu_2\mu_1$ of mutations (one can use Keller's mutation app
\cite{QuiverMutation06} by adding a framing) along the vertices $t_0,t_1,\ldots, t_6$ of the tree $\mathbb{T}_r$. We also write the corresponding $g$-vectors.  One observes that this is a reddening sequence. 
\[(\begin{tikzcd}
	\color{green}1 & \color{green}2 & \color{green}3
	\arrow[from=1-1, to=1-2]
	\arrow[from=1-2, to=1-3]
\end{tikzcd}, \{e_1, e_2, e_3\})\]
\[\downarrow{\mu_1}
\]
\[(\begin{tikzcd}
	\color{red}1 & \color{green}2 & \color{green}3
	\arrow[tail reversed, no head, from=1-1, to=1-2]
	\arrow[from=1-2, to=1-3]
\end{tikzcd}, \{-e_1+e_2, e_2,e_3\})
\]
\[\downarrow{\mu_2}
\]
\[(\begin{tikzcd}
	\color{green}1 & \color{red}2 & \color{green}3
	\arrow[from=1-1, to=1-2]
	\arrow[tail reversed, no head, from=1-2, to=1-3]
\end{tikzcd},\{-e_1+e_2,-e_1+e_3,e_3\})
\]
\[\downarrow{\mu_3}
\]
\[(\begin{tikzcd}
	\color{green}1 & \color{green}2 & \color{red}3
	\arrow[from=1-1, to=1-2]
	\arrow[from=1-2, to=1-3]
\end{tikzcd},\{-e_1+e_2,-e_1+e_3,-e_1\})
\]
\[\downarrow{\mu_1}
\]
\[(\begin{tikzcd}
	\color{red}1 & \color{green}2 & \color{red}3
	\arrow[tail reversed, no head, from=1-1, to=1-2]
	\arrow[from=1-2, to=1-3]
\end{tikzcd}, \{-e_2+e_3,-e_1+e_3,-e_1\})
\]
\[\downarrow{\mu_2}
\]
\[(\begin{tikzcd}
	\color{green}1 & \color{red}2 & \color{red}3
	\arrow[from=1-1, to=1-2]
	\arrow[tail reversed, no head, from=1-2, to=1-3]
\end{tikzcd},\{-e_2+e_3,-e_2,-e_1\})
\]
\[\downarrow{\mu_1}
\]
\[(\begin{tikzcd}
	\color{red}1 & \color{red}2 & \color{red}3
	\arrow[tail reversed, no head, from=1-1, to=1-2]
	\arrow[tail reversed, no head, from=1-2, to=1-3]
\end{tikzcd},\{-e_3,-e_2,-e_1\})
\]
So we have $Q' = Q(t_6)$. 
The sequence of $F$-polynomials associated with $t_6$ is $F(t_6)=(F_1, F_2, F_3)$, where
\begin{align*}
F_1 &= 1 + y_3 \\
F_2 &=1 + y_2 + y_2y_3\\ 
F_3 &=1 + y_1 + y_1y_2 + y_1y_2y_3.  
\end{align*}
The list of values of the permutation $\sigma$ is $(3,2,1)$. Therefore, the sequence
$DTF_Q$ is $(F_3, F_2, F_1)$.
Another possible maximal green sequence would be $\mu_1\mu_2\mu_3$. 
Then the permutation $\sigma$ is the identity and we obtain the same sequence $DTF_Q$ of $F$-polynomials. 
\end{example}

\subsection{Computation via representations}
For a (right) module $M$ over the path algebra $kQ$ (or, equivalently, a representation of $Q^{op}$), 
where $k=\mathbb{C}$, and a 
dimension vector $\textbf{e}=(e_1, \ldots\ , e_r)$, we denote by $Gr_{\textbf{e}}(M)$ the 
{\em quiver Grassmannian} of subrepresentations 
$N \subseteq M$ of dimension vector $\textbf{e}$. We consider it as a complex projective
variety. In particular, it is a compact topological subspace of some projective space over $\mathbb{C}$ endowed with
the transcendental topology. We write $\chi(Gr_{\textbf{e}}(M))$ for its Euler characteristic.
By definition \cite{DerksenWeymanZelevinsky10},  the {\em $F$-polynomial of the representation $M$} is
\[
F_{M}(y_1,y_2,\ldots,y_r) = \sum_{\textbf{e}} \chi(Gr_{\textbf{e}}(M)) {\displaystyle \prod_{i=1}^r y_i^{e_i}}. 
\] 
\begin{theorem}[Nagao \cite{Nagao13}] \label{thm:Nagao}
If $Q$ admits a reddening sequence and $W$ is any non degenerate potential
on $Q$, then the $i$th $F$-polynomial in the sequence $DTF_Q$ is given by
\[
DTF_{Q,i}=F_{I_i}
\]
where  $I_i$ is the (right) module over the Jacobian algebra of $(Q,W)$ constructed as
the injective hull of the simple module concentrated at the vertex $i$ of $Q$.
\end{theorem}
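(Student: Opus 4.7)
The plan is to work inside Amiot's generalized cluster category $\cc=\cc_{(Q,W)}$, a $2$-Calabi--Yau triangulated category (passing to Plamondon's $\Hom$-infinite version if $J(Q,W)$ is not finite-dimensional) carrying a canonical cluster-tilting object $T$ with $\End_\cc(T)\cong J(Q,W)$. The first step is to invoke the categorical cluster character of Palu--Plamondon: every cluster variable $X_i(t)$ equals $CC(L_i(t))$ for a unique reachable rigid indecomposable $L_i(t)\in\cc$, with $g_i(t)=\ind_{\add(T)}(L_i(t))$ and $F_i(t)=F_{\Hom_\cc(T,\Sigma L_i(t))}$, the latter being the $F$-polynomial of $\Hom_\cc(T,\Sigma L_i(t))$ viewed as a right $J(Q,W)$-module. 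This recovers the DWZ combinatorial definition of the modules $M_i(t)$.

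Next, at the endpoint $t'$ of a reddening sequence we are given $g_i(t')=-e_{\sigma(i)}$. The shifted summand $\Sigma T_{\sigma(i)}$ is rigid (since $T$ is) and has index $-e_{\sigma(i)}$, as read off from the triangle $T_{\sigma(i)}\to 0\to\Sigma T_{\sigma(i)}\to\Sigma T_{\sigma(i)}$. By Dehy--Keller \cite{dehy2008combinatorics}, the triangulated analogue of Theorem~\ref{thm:index determines rigid object}, such a rigid object is unique up to isomorphism, so $L_i(t')\cong\Sigma T_{\sigma(i)}$. The $2$-Calabi--Yau duality then gives
\[
\Hom_\cc(T,\Sigma L_i(t')) \cong \Hom_\cc(T,\Sigma^{2} T_{\sigma(i)}) \cong D\Hom_\cc(T_{\sigma(i)},T) = D(Je_{\sigma(i)}) = I_{\sigma(i)},
\]
the indecomposable injective right $J(Q,W)$-module at vertex $\sigma(i)$. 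Hence $F_i(t')=F_{I_{\sigma(i)}}$, and since by definition $DTF_{Q,i}=F_{\sigma^{-1}(i)}(t')$, this yields $DTF_{Q,i}=F_{I_{\sigma(\sigma^{-1}(i))}}=F_{I_i}$ as desired.

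The main obstacle is the first step: matching the categorically defined module $\Hom_\cc(T,\Sigma L_i(t))$ with DWZ's combinatorially defined $M_i(t)$ is the core of the representation-theoretic categorification of cluster algebras with principal coefficients, and in the generality required here (arbitrary quiver with non-degenerate potential) it relies on Plamondon's extension of Palu's cluster character to the $\Hom$-infinite setting. Once this input is granted, the remaining steps reduce to a direct index computation and a single application of $2$-Calabi--Yau duality. An alternative, more self-contained, route would bypass Palu--Plamondon by using the derived equivalences $\Phi(t')$ of section~\ref{ss:g-vectors as indices} directly: at a reddening endpoint all $c$-vectors are negative, so the $c$-vector lemma forces each $\Phi(t')(S_i(t'))$ into $\Sigma\Mod\ct^0$, and tracking the exchange triangles through $\Phi(t')$ identifies these images with shifts of indecomposable injective $J(Q,W)$-modules.
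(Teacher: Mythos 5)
The paper does not prove this theorem itself: it cites Nagao and points to \cite{keller2011cluster} for an explanation, then reuses the key $2$-Calabi--Yau computation $\Ext^1(T,\Sigma T_i)=\Ext^2(T,T_i)=D\Hom(T_i,T)=I_i$ in section~5.2 when specializing to the Grassmannian. Your reconstruction --- translating into Amiot's (or Plamondon's $\Hom$-infinite) cluster category via the Palu--Plamondon cluster-character dictionary, identifying the reddened-seed rigid object with $\Sigma T_{\sigma(i)}$ by an index comparison and Dehy--Keller uniqueness, and then applying $2$-Calabi--Yau duality --- is exactly that standard argument and is correct, with the one deep input (the matching of categorical and combinatorial $F$-polynomials) correctly flagged; the sketched alternative via the $\Phi(t')$ equivalences controls $g$- and $c$-vectors but would still need a cluster-character mechanism to reach the $F$-polynomials.
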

We refer to \cite{keller2011cluster} for an explanation of the proof.
In the above example~\ref{ex:combDT}, the vanishing potential is non degenerate and
the injective representations are
\begin{align*}
I_1 &= (k \to k \to k) \\
I_2 &= (0 \to k \to k) \\
I_3 &= (0 \to 0 \to k).
\end{align*}
Clearly, the associated $F$-polynomials are exactly those of the sequence $DTF_Q$ computed above.

Our aim is to study the $DTF$-polynomials associated with the Grassmannian
cluster algebra using the representation-theoretic approach. We will show that the coefficients
appearing in these $F$-polynomials equal $0$ or $1$. This result also follows from Weng's description of 
$DTF$-polynomials in \cite{Weng23}, where he studies more generally $DTF$-polynomials  on 
quivers associated with triples of flags. However, we believe we have a simpler 
approach to the same. 

We keep the assumptions and notations of 
section~\ref{ss: Reminder on the stable category of Cohen--Macaulay modules}.
Since the Jacobian algebra $J = J(Q,W)$ is isomorphic to the stable 
endomorphism algebra $\underline{\mathrm{End}}_B(T)$,
the algebra $J$ acts on $\underline{\mathrm{Hom}}(T, M)$ for each module $M$ in $\cal{C}$. 
The $F$-polynomial associated with an object $M \in {\cal{C}}$ is given by 
\[ 
F_M(y) = \sum_{\textbf{e}} \chi(Gr_{\textbf{e}}(\Ext^1(T,M))). {\displaystyle \prod_{i=1}^r y_i^{e_i}}. 
\]
 We observe that when $M \in \add(T)$, then $F_M = 1$ since $\Ext^1(T,T) = 0$. 
 When  $M = \Sigma T_i$, then we have
 \[
 \Ext^1(T,\Sigma T_i)= \Ext^2(T, T_i) = D \underline{\Hom}(T_i, T)=I_i
 \]
 by the $2$-CY property of the stable category. Thus, the $DTF$-polynomials
 of the Grassmannian cluster algebra are the polynomials $F_{I_i}$, $1 \leq i\leq r$,
 where $r=(k-1)(n-k-1)$ is the number of vertices of $Q$.
 
 \subsection{Grading on the quiver} We define a grading $Q_1 \to \Z$ on $Q$ by declaring
 the degree of all diagonal arrows to be $1$ and the degrees of the horizontal and
 the vertical arrows to be $0$. This yields a non negative grading on the path algebra
 $k Q$. Notice that the potential $W$ is homogeneous of degree $1$ for this
 grading. Thus, each cyclic derivative $\partial_\alpha W$ is homogeneous of
 degree $1-|\alpha|$. Therefore, the ideal defining the Jacobian algebra is homogeneous
 and the Jacobian algebra inherits a grading from the path algebra. Clearly,
 each indecomposable injective module $I_i$ is naturally graded.
 
 Let $M$ be a finite-dimensional graded $J$-module. For a dimension
 vector $\mathbf{e}$, we denote by $Gr^\Z_{\mathbf{e}}(M)$ the
 Grassmannian of {\em graded} submodules $N \subseteq M$ of
 dimension vector $\mathbf{e}$. We have a $k^\times$-action on
 $M$ given by
 \[
 t.m = t^{|m|}m
 \]
 for homogeneous elements $m\in M$. It induces a $k^\times$-action
 on the quiver Grassmannian $Gr_{\mathbf{e}}(M)$ and the
 subvariety $Gr^\Z_{\mathbf{e}}(M)$ is the fixed point set of
 this action. By Bialynicki-Birula's theorem \cite{BialynickiBirula73},
 we have the equality of Euler characteristics
 \[
 \chi(Gr^\Z_{\mathbf{e}}(M)) = \chi(Gr_{\mathbf{e}}(M)).
 \]
 In particular, we may compute the $i$th $DTF$-polynomial using the
 formula
 \begin{equation} \label{eq:CChomog}
 F_{I_i}(y)=  \sum_{\textbf{e}} \chi(Gr^\Z_{\textbf{e}}(I_i)). {\displaystyle \prod_{i=1}^r y_i^{e_i}}.
 \end{equation}
 In order to describe the structure of the graded submodules of $I_i$, we need
 the following lemma. 
 We identify the set $Q_0$ of vertices of $Q$ with the 
 product set
 \[
 \{1, \ldots\ , n-k-1\} \times \{1, \ldots\ , k-1\}.
 \]
 Let us denote all horizontal arrows of $Q$ by $a$,
 all vertical arrows by $b$ and all diagonal arrows by $c$.
 For two integers $u\leq v$, we denote by 
 $\llbracket u,v \rrbracket$ the integer interval
 $\{u, u+1, \ldots, v\}$.
 
 \begin{lemma} \label{keylemmadt} Let $i=(p,q)$ be a vertex of $Q$. For an
 integer $d$ and a vertex $j=(p',q')$ of $Q$, the homogeneous component of degree $d$
 of $e_j J e_i$ is at most one-dimensional. It does not
 vanish  if and only if we have
 \[
 0 \leq d \leq \min(p-1,q-1)
 \] 
 and $j$ belongs to the image of the rectangle
 \begin{equation} \label{eq:R_i}
 R_i=\llbracket p, n-k-1\rrbracket \times \llbracket q, k-1\rrbracket
 \end{equation}
 under the translation by the vector $(-d, -d)$.
 In this case, the homogeneous component is generated by
 the image in $J$ of the path $a^r b^s c^d$, where 
$r=p'-p+d$, and $s=q'-q+d$.
\end{lemma}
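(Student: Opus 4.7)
The plan is to exploit the cyclic derivative relations $\partial_{\alpha}W=0$ in $J=J(Q,W)$ to put every path from $i$ to $j$ into a canonical monomial form, and then to pin down when this monomial is nonzero. Each arrow of $Q$ lies in at most two oriented $3$-cycles, one positively and one negatively oriented, and when both cycles exist the relation $\partial_{\alpha}W=0$ yields an interior swap relation: $bc=c'b'$ from the derivative at a horizontal arrow, $ca=a'c'$ from the derivative at a vertical arrow, and $ab=ba$ from the derivative at a diagonal arrow. At boundary arrows (top or bottom row, or leftmost or rightmost column) only one of the two cycles exists, so the same derivative becomes an annihilation relation of the form $bc=0$, $c'b'=0$, $ac=0$, or $ca=0$.

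Next I would derive the normal form. Any path from $(p,q)$ to $(p',q')$ of degree $d$ must contain exactly $r=p'-p+d$ horizontal and $s=q'-q+d$ vertical arrows, because horizontals, verticals, and diagonals shift coordinates by $(1,0)$, $(0,1)$, and $(-1,-1)$ respectively. Iterated application of the interior swaps sorts the arrows of any such path so that all horizontals come first, then all verticals, then all diagonals, reducing it modulo relations to the path $a^{r}b^{s}c^{d}$; this already yields the one-dimensional upper bound on $(e_{j}Je_{i})_{d}$. Tracking intermediate vertices shows that $a^{r}b^{s}c^{d}$ is an actual path in $Q$ precisely when $j\in R_{i}+(-d,-d)$, while the mirror form $c^{d}a^{r}b^{s}$ (with diagonals pushed all the way to the left) is a valid path iff the same rectangle condition on $j$ holds and additionally $d\le\min(p-1,q-1)$. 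When both forms are valid, an explicit sequence of interior swaps identifies them in $J$; when only one is valid, any attempt to commute the diagonals across the grid must pass through an arrow on the boundary, and the corresponding annihilation relation forces the path to zero in $J$. This is the ``only if'' direction of the lemma.

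Showing that the path is genuinely nonzero when both conditions hold, i.e.\ that no longer chain of ideal relations collapses it, is the harder half. I plan to handle this via the triangle equivalence~\eqref{eq: cluster to stable}: the graded piece $(e_{j}Je_{i})_{d}$ identifies with a summand of a $\Hom$-space in the cluster category $\cc_{B_{1}\otimes B_{2}}=\cd^{b}(\mod B_{1}\otimes B_{2})/(S^{-1}\Sigma^{2})^{\Z}$, and since $B_{1}\otimes B_{2}$ is the tensor product of two equioriented type-$A$ path algebras, its projectives, injectives, and Serre functor are explicit. A direct calculation in this framework (compatible with the $\Ga^{\vee}$-grading of Remark~\ref{rk:grading}) shows that the corresponding $\Hom$-space has dimension exactly one on the asserted rectangle and zero elsewhere, matching the candidate normal-form basis and confirming nonvanishing. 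The delicate point throughout is keeping precise track of when a would-be interior swap in fact becomes a boundary annihilation, which I would handle by carrying each swap through the grid vertex by vertex and halting as soon as a boundary arrow is encountered.
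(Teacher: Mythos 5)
Your plan captures the right combinatorics — the Jacobian relations are indeed interior commutations plus boundary annihilations, the degree count forces $(r,s)=(p'-p+d,\,q'-q+d)$, and the two bounds in the lemma do come from the two extreme orderings of the monomial. But the sorting argument, as stated, is informal in a way that matters. The assertion that ``iterated application of the interior swaps sorts the arrows'' is a confluence claim about a rewriting system: one must know that every chain of swaps terminates in either $a^r b^s c^d$ or in zero, independently of the order in which swaps are applied. The paper sidesteps this entirely by first lifting to the infinite graded quiver $\tilde{Q}$ on $\Z\times\Z$ with fully commuting $a,b,c$: in the resulting category $\tilde{\cj}$ every Hom-space has a unique monomial basis element $a^r b^s c^t$, so the one-dimensional upper bound is immediate, and the finite model $\cj$ is obtained by killing the identities of the vertices outside $R_+$. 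Once one accepts $J\cong\bigoplus\cj(i,j)$, the statement ``vanishing iff the morphism factors through a vertex outside $R_+$'' is transparent, and the three bounds $r\le (n-k-1)-p$, $s\le (k-1)-q$, $t\le\min(p-1,q-1)$ drop out of a short computation on the possible intermediate vertices.

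The part of your argument that would fail as written is the vanishing direction. You claim that when only one of $a^r b^s c^d$, $c^d a^r b^s$ is a valid path, ``any attempt to commute the diagonals across the grid must pass through an arrow on the boundary,'' forcing the path to zero. That is true in spirit but requires proof: in a larger grid the valid monomial need not touch the boundary at all, and one must exhibit a specific chain of interior commutations that eventually produces a length-two subword killed by a boundary relation; this is precisely the content packaged by the identification $J\cong\bigoplus\cj(i,j)$. (As a side note, your validity criterion for the mirror form is slightly off: $c^d a^r b^s$ is a path in $Q$ iff $d\le\min(p-1,q-1)$, with no additional rectangle hypothesis; your final conclusion happens to be right only because you then intersect with the validity condition for $a^r b^s c^d$.) Finally, appealing to the equivalence $\cc\simeq\cc_{B_1\ten B_2}$ to certify nonvanishing is plausible and would likely go through, but it is far heavier than what the paper needs: with the $\tilde{\cj}/\cj$ model in hand, nonvanishing reduces to the observation that a morphism admitting no factorization through an outside vertex survives the quotient, and no derived-category computation is required.
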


\begin{tikzpicture}[scale=0.70]
    \draw[gray!50, thin] (0,0) grid (13,7);
   
    \draw[brown,thick] (7,4) rectangle(13,7);
    \draw[cyan,thick] (6,3) rectangle(12,6);
    \draw[magenta, thick](5,2) rectangle(11,5);
    \draw[green, thick](4,1) rectangle(10,4);
    \draw[blue, thick](3,0) rectangle(9,3);
    
    \draw[red, thick, ->] (7,4) -- (6,3);
    \draw[red, thick, ->] (6,3) -- (5,2);
    \draw[red, thick, ->] (5,2) -- (4,1);
    \draw[red, thick, ->] (4,1) -- (3,0);
    
    \node at (7,0) [below] {$p$};
    \node at (0,4) [left] {$q$};
    \node at (0,0) [below]{$1$};
    \node at (1,0) [below]{$2$};
     \node at (0,1) [left]{$2$};
  \node at (13,0)[below]{$n-k-1$};
  \node at (0,7)[left]{$k-1$};
  \node at (7,4) {$(p,q)$};
     \end{tikzpicture}

\begin{proof}  Let $\tilde{Q}$ be the infinite graded quiver with vertex set
$\Z \times \Z$ and with arrows
\[
a: (p,q) \to (p+1,q) \ko
b: (p,q) \to (p, q+1) \ko
c: (p,q) \to (p-1, q-1)
\]
for all $(p,q)\in \tilde{Q}$, where $a$ and $b$ are of degree $0$ and
$c$ is of degree $1$.  Let $\tilde{\cj}$ be the quotient of the
graded path category of $\tilde{Q}$ by all commutativity relations
\begin{equation} \label{commutativity}
ab=ba\ko bc = cb \ko ac = ca.
\end{equation}
Since these relations are homogeneous, $\tilde{\cj}$ is
naturally graded.  Let $\cj$ be the quotient of $\tilde{\cj}$ by the ideal
generated by the identities of all objects $(p,q)$ not lying
in the rectangle
\[
 R_+=\llbracket 1, n-k-1\rrbracket \times \llbracket 1, k-1\rrbracket.
 \]
 Clearly, the algebra $J$ is the graded `matrix algebra'
 \[
 J = \bigoplus \cj(i,j)
 \]
where $i$ and $j$ range over all vertices of $R_+$. We need to
show that each graded component $\cj(i,j)_t$, $t\in \Z$, is of
dimension at most $1$. In view of the relations~(\ref{commutativity}),
each morphism from $i=(p,q)$ to $j=(p',q')$ in the category
$\tilde{\cj}$ can be uniquely written in the form $a^r b^s c^t$,
where $(p'-p, q'-q)=(r-t, s-t)$. In particular, the component
$\tilde{\cj}(i,j)_t$ is of dimension at most one for each
fixed $t\in \Z$. Clearly, the quotient category $\cj$ inherits
this property. It remains to be determined which morphisms
$a^r b^s c^t$ have non zero images in $\cj$. Clearly,
such a morphism has vanishing image in $\cj$ if and only
if it factors through a vertex outside the rectangle $R_+$.
Suppose that $\alpha=a^r b^s c^t$  is a morphism from $i=(p,q)$
to $j=(p',q')$ in $\tilde{\cj}$. If $r>(n-k-1)-p$, then $\alpha=b^s c^t a^r$
factors through $(p+r, q)$, which lies outside of $R_+$, and if
$s>(k-1)-q$, then $\alpha=a^r c^t b^s$ factors through
$(p, q+s)$, which lies outside of $R_+$. On the other
hand, the morphisms $a^r b^s: (p,q) \to (p+r,q+s)$ for
$r\leq (n-k-1)-p$ and $s\leq (k-1)-q$ clearly have
non vanishing images in $\cj$ and so do their
compositions with $c^t$ for $t \leq \min(p-1, q-1)$.
\end{proof}

Let us use the lemma to elaborate on the structure of the
injective indecomposable module $I_i$ associated with
a vertex $i=(p,q)$ of $Q$. Recall that the value at a vertex
$j=(p',q')$ of the representation of $Q^{op}$ associated
with $I_i$ is the dual 
\[
\Hom_k(e_i J e_j, k) = \Hom_k(\cj(i,j), k) \ko
\]
where we have used the notation $\cj$ from the proof of the
lemma. Let $R_i$ denote the rectangle (\ref{eq:R_i}) of the lemma.
It follows from the lemma, that 
\begin{itemize}
\item[a)] $I_i$ is the direct sum of its homogeneous components $(I_i)_d$ for 
\[
-\min(p-1, q-1) \leq d \leq 0 \ko
\] 
\item[b)] the homogeneous component $(I_i)_d$ is thin and supported
in the rectangle $R_i  + (d,d)$, i.e. the value $(I_i)_d(r,s)$ of $(I_i)_d$
at a vertex $(r,s)$ is of dimension at most $1$ and of dimension one
precisely when $(r,s)$ lies in $R_i+(d,d)$,
\item[c)] right multiplication by an arrow $c$ with target $(r,s)$ in
$R_i+(d-1,d-1)$ induces a bijection $(I_i)_{d-1}(r,s) \iso (I_i)_d(r+1, s+1)$
for each $d$ such that $-\min(p-1, q-1) <d \leq 0$. Similarly for
right multiplication by $a$ and by $b$.
\end{itemize}
It follows that the homogeneous submodules of $I_i$ used in formula
(\ref{eq:CChomog}) are in bijection
with the right ideals (=predecessor closed subsets) of the poset
$L_r \times L_s \times L_t$, where $L_r$ is the linearly ordered set
$1 < 2< \cdots < r$, and $r=(n-k-1)-p$, $s=(k-1)-q$ and
$t=1 + \min(p-1, q-1)$. More precisely, if $e$ is a dimension vector
for $Q$, then the quiver Grassmannian $Gr^\Z_{\textbf{e}}(I_i)$ is
a finite set of points, one for each homogeneous dimension vector
$\tilde{e}$ with associated non homogeneous dimension vector $e$.
If $K$ is a right ideal of $L_r \times L_s \times L_t$, its contribution
to $F_{I_i}$ is the sum of all monomials 
\[
\prod_{(p',q',r')\in K} y_{p+p'-r', q+q'-r'}.
\]
Thus, we have proved the
\begin{theorem}[Weng \cite{Weng23}] \label{thm:Weng} 
For a vertex $i=(p,q)$ of $Q$, the corresponding
$DTF$-polynomial is
\[
F_{I_i}(y)=\sum_K \prod_{(p',q',r')\in K} y_{p+p'-r', q+q'-r'}
\]
where $K$ ranges over the right ideals of the poset 
$L_r \times L_s \times L_t$ with $r=(n-k-1)-p$, $s=(k-1)-q$ and
$t=1 + \min(p-1, q-1)$.
\end{theorem}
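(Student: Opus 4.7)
\medskip
\noindent\textbf{Proof proposal for Theorem~\ref{thm:Weng}.}
The plan is to combine Nagao's theorem (Theorem~\ref{thm:Nagao}) with the grading trick of Bialynicki-Birula and the explicit structural description of $I_i$ afforded by Lemma~\ref{keylemmadt}. By Theorem~\ref{thm:Nagao}, since $Q$ admits a reddening sequence (this is known for the Grassmannian rectangular quiver), the $i$th term of $DTF_Q$ equals $F_{I_i}$, computed as a sum over quiver Grassmannians of dimension vectors in $I_i$. Because the potential $W$ is homogeneous of degree $1$ for the grading that assigns degree $0$ to horizontal and vertical arrows and degree $1$ to diagonal arrows, the Jacobian algebra $J$ inherits this grading and so does $I_i$. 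The induced $k^{\times}$-action on $Gr_{\mathbf{e}}(I_i)$ has fixed-point set $Gr^{\Z}_{\mathbf{e}}(I_i)$, and by Bialynicki-Birula's theorem the two varieties have the same Euler characteristic. This yields formula~(\ref{eq:CChomog}), so it suffices to enumerate graded submodules of $I_i$.

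Next I would exploit Lemma~\ref{keylemmadt} to describe $I_i$ very concretely. Writing $i=(p,q)$, the lemma identifies each homogeneous component $(I_i)_d$ (for $-\min(p-1,q-1)\leq d \leq 0$) as the thin representation of $Q^{op}$ supported on the translated rectangle $R_i+(d,d)$, and shows that right multiplication by the arrows $a$, $b$, $c$ provides canonical isomorphisms between the one-dimensional pieces at the vertices where they are defined. Consequently, the poset of graded submodules of $I_i$ is canonically isomorphic to the poset of right ideals (equivalently, predecessor-closed subsets) of the three-dimensional product poset $L_r\times L_s\times L_t$ where $r=(n-k-1)-p$, $s=(k-1)-q$ and $t=1+\min(p-1,q-1)$: the coordinates $(p',q',r')$ correspond to moves by $a$, $b$, $c$ respectively starting from the socle generator.

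Once this identification is set up, the rest is bookkeeping. For a right ideal $K$ of $L_r\times L_s\times L_t$, the corresponding graded submodule $N_K\subseteq I_i$ is one-dimensional at exactly those vertices of $Q$ reached from $i$ by the prescribed sequences of arrows; namely, the triple $(p',q',r')\in K$ contributes a basis vector at the vertex $(p+p'-r',q+q'-r')$ of $Q$. Hence the graded Grassmannian $Gr^{\Z}_{\mathbf{e}}(I_i)$ decomposes as a disjoint union of isolated points indexed by right ideals $K$ with associated dimension vector $\mathbf{e}$, and the monomial contributed by $N_K$ is precisely
\[
\prod_{(p',q',r')\in K} y_{p+p'-r',\,q+q'-r'}.
\]
Summing over all right ideals $K$ yields the claimed formula.

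The step I expect to be the main obstacle is the rigorous identification of the poset of graded submodules of $I_i$ with right ideals of $L_r\times L_s\times L_t$. One needs to verify that every graded subspace closed under right multiplication by $a$, $b$, $c$ arises in this way, and that the dictionary between the coordinates $(p',q',r')$ of the three-dimensional poset and the arrows of $Q$ respects the action of $J$ (using the commutativity relations~(\ref{commutativity}) and the vanishing of morphisms factoring through vertices outside the fundamental rectangle, as in the proof of Lemma~\ref{keylemmadt}). Everything else, including the conversion of this combinatorial enumeration into the monomial $\prod y_{p+p'-r',q+q'-r'}$, is a matter of matching indices.
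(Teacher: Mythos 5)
Your proposal follows exactly the same route as the paper: Nagao's theorem to reduce to $F_{I_i}$, the degree-$1$-on-diagonals grading plus Bialynicki--Birula to pass to graded submodules, and Lemma~\ref{keylemmadt} to identify the poset of graded submodules of $I_i$ with the poset of right ideals of $L_r\times L_s\times L_t$, after which the formula follows by reading off dimension vectors. The step you flag as the main obstacle is precisely what Lemma~\ref{keylemmadt} and its corollaries (a)--(c) in the paper are designed to settle, so there is no genuine gap in the approach.
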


\begin{remark} Notice that the right ideals of the poset $L_r \times L_s \times L_t$
may be viewed as the 3D Young diagrams contained in an integral cuboid
of side lengths $r$, $s$ and $t$. This is the viewpoint of Weng in 
Theorem~7.7 of \cite{Weng23}. His theorem is formulated for
the varieties of triples of flags but it is equivalent to the corresponding
formula for the Grassmannians thanks to Proposition~4.3 of
\cite{Weng23}, cf.~also \cite{Muller16, KellerDemonet20}. 
Indeed, by deleting vertices (and the arrows incident with them), we can pass from
the rectangular quiver of a Grassmannian to the triangular
quiver of a variety of triples of flags and vice versa. Notice
that our proof is completely different from Weng's. In particular,
it offers a natural interpretation of the poset of 
right ideals in $L_r \times L_s \times L_t$ as the
poset of graded submodules in an indecomposable
injective module over the Jacobian algebra.
\end{remark}

\begin{proposition} The non zero coefficients of the polynomial
$F_{I_i}$ are equal to $1$.
\end{proposition}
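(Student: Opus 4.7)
The plan is to prove that the map from right ideals $K$ of $L_r\times L_s\times L_t$ to the monomials
\[
m(K)=\prod_{(p',q',r')\in K} y_{p+p'-r',\,q+q'-r'}
\]
appearing in the formula of Theorem~\ref{thm:Weng} is injective; this immediately gives the claim.

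The key observation is the following. For a pair $(a,b)$ indexing a variable $y_{a,b}$, define the diagonal
\[
D_{a,b}=\{(p',q',r')\in L_r\times L_s\times L_t : p'-r' = a-p,\ q'-r' = b-q\}.
\]
Any two points of $D_{a,b}$ differ by a vector of the form $(c,c,c)$, so $D_{a,b}$ is a chain under the componentwise order on $L_r\times L_s\times L_t$. Consequently, for any right ideal $K$, the intersection $K\cap D_{a,b}$ is a down-closed subset of a finite chain, hence an initial segment completely determined by its cardinality.

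Now the monomial $m(K)$ records exactly the exponent $|K\cap D_{a,b}|$ of each variable $y_{a,b}$. By the preceding paragraph, these cardinalities determine each intersection $K\cap D_{a,b}$, and $K$ is the disjoint union of these intersections over $(a,b)$. Hence $K$ is reconstructible from $m(K)$, which shows that every coefficient of $F_{I_i}$ is $0$ or $1$. I do not anticipate a serious obstacle; the only point requiring care is the chain structure of the diagonals, and this reduces to the elementary fact that two triples $(p',q',r')$ with the same image under $(p',q',r')\mapsto(p+p'-r',q+q'-r')$ must differ by a constant vector in all three coordinates.
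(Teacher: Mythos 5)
Your proof is correct and reaches the conclusion by a shorter and more directly combinatorial route than the paper's. Both arguments rest on the same geometric fact: the shift by $(1,1,1)$ is an order-embedding of the poset $L_r\times L_s\times L_t$, or, in module language, the action of the diagonal arrow $c$ on the graded injective $I_i$ is injective. The organization, however, is different. You argue directly with the formula of Theorem~\ref{thm:Weng}: the fibers $D_{a,b}$ of the labelling map $(p',q',r')\mapsto(p+p'-r',\,q+q'-r')$ partition $L_r\times L_s\times L_t$ into chains, a down-closed subset of a finite chain is determined by its cardinality, and hence a right ideal $K$ is recoverable from its monomial $m(K)$ --- exactly the injectivity needed. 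The paper instead identifies the coefficient of $\prod_j y_j^{e_j}$ with $\chi(Gr^\Z_{\mathbf{e}}(I_i))$, which (because the graded quiver Grassmannian is a finite set of points) counts graded submodules of $I_i$ with underlying dimension vector $\mathbf{e}$, and shows there is at most one such submodule by observing that the graded dimension vector is recoverable from $\mathbf{e}$ via consecutive differences along diagonals of $Q$, the differences being well defined and non-negative because $c$ acts injectively. Your version avoids the detour through $Gr^\Z_{\mathbf{e}}(I_i)$ and makes the combinatorial reason for the $0/1$-valuedness of the coefficients transparent at the level of the right-ideal formula.
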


\begin{proof} We need to show that if the quiver Grassmannian
$Gr^\Z_{\textbf{e}}(I_i)$ is non-empty, there is at most one
homogeneous dimension vector $\tilde{e}$ whose associated
non homogeneous dimension vector is $e$. For this, we 
fix an integer $p_0 \in \{1, \ldots, n-k-1\}$ and consider
the function taking $t$ to $e(p_0+t, t)$ when the latter
is defined and to $0$ otherwise.
Since right multiplication with $c$ defines an injection
$I_i(p_0+t-1, t-1) \to I_i(p_0+t, t)$ (whenever both are
defined), this function is increasing and we have
\[
\tilde{e}(p_0+t, t) = e(p_0+t, t) - e(p_0 +t-1, t-1).
\]
\end{proof}

\begin{example} \label{example Gr(4,9)} We work with the example of $Gr(4,9)$. We consider the quiver without the frozen vertices. The indices corresponding to the Pl\"ucker coordinates are marked in the boxes. The initial seed looks as follows: 
\begin{equation} \label{eq:initial-seed}
\begin{tikzcd} 
	\color{green} [1345] & \color{green} [1456] & \color{green} [1 5 6 7] & \color{green} [1 6 7 8 ] \\
	\color{green} [1 2 4 5] & \color{green} [1 2 5 6] & \color{green} [1 2 6 7] & \color{green} [1 2 7 8] \\
	\color{green} [1 2 3 5] & \color{green} [1 2 3 6] & \color{green} [1 2 3 7] & \color{green} [1 2 3 8]
	\arrow[from=1-1, to=1-2]
	\arrow[from=1-2, to=1-3]
	\arrow[from=1-2, to=2-1]
	\arrow[from=1-3, to=1-4]
	\arrow[from=1-3, to=2-2]
	\arrow[from=1-4, to=2-3]
	\arrow[from=2-1, to=1-1]
	\arrow[from=2-1, to=2-2]
	\arrow[from=2-2, to=1-2]
	\arrow[from=2-2, to=2-3]
	\arrow[from=2-2, to=3-1]
	\arrow[from=2-3, to=1-3]
	\arrow[from=2-3, to=2-4]
	\arrow[from=2-3, to=3-2]
	\arrow[from=2-4, to=1-4]
	\arrow[from=2-4, to=3-3]
	\arrow[from=3-1, to=2-1]
	\arrow[from=3-1, to=3-2]
	\arrow[from=3-2, to=2-2]
	\arrow[from=3-2, to=3-3]
	\arrow[from=3-3, to=2-3]
	\arrow[from=3-3, to=3-4]
	\arrow[from=3-4, to=2-4]
\end{tikzcd}
\end{equation}
We use the following maximal green (hence reddening) sequence: Initially, all vertices are 
colored green. We successively mutate the vertices in rows $1$ to $3$ starting at the left vertex of each 
row. After these $12$ mutations, the vertices in the rightmost column have turned red and all others 
are green.
\[\begin{tikzcd} 
	\color{green} [2456] & \color{green} [2 5 6 7] & \color{green} [2 6 7 8] & \color{red} [2 7 8 9] \\
	\color{green} [2 3 5 6] & \color{green} [2 3 6 7] & \color{green} [2 3 7 8] & \color{red} [2 3 8 9]\\
	\color{green} [2 3 4 6] & \color{green} [2 3 4 7] & \color{green} [2 3 4 8] & \color{red} [2 3 4 9]
	\arrow[from=1-1, to=1-2]
	\arrow[from=1-2, to=1-3]
	\arrow[from=1-2, to=2-1]
	\arrow[from=1-3, to=1-4]
	\arrow[from=1-3, to=2-2]
	\arrow[from=1-4, to=2-3]
	\arrow[from=2-1, to=1-1]
	\arrow[from=2-1, to=2-2]
	\arrow[from=2-2, to=1-2]
	\arrow[from=2-2, to=2-3]
	\arrow[from=2-2, to=3-1]
	\arrow[from=2-3, to=1-3]
	\arrow[from=2-3, to=2-4]
	\arrow[from=2-3, to=3-2]
	\arrow[from=2-4, to=1-4]
	\arrow[from=2-4, to=3-3]
	\arrow[from=3-1, to=2-1]
	\arrow[from=3-1, to=3-2]
	\arrow[from=3-2, to=2-2]
	\arrow[from=3-2, to=3-3]
	\arrow[from=3-3, to=2-3]
	\arrow[from=3-3, to=3-4]
	\arrow[from=3-4, to=2-4]
\end{tikzcd}\]     
Now we similarly proceed with the vertices in the green $3 \times 3$-square. After these
$9$ mutations, the last two columns have turned red and the first two are still green. 
\[\begin{tikzcd}
	\color{green} [3 5 6 7] & \color{green}[3 6 7 8] & \color{red}[3 7 8 9] & \color{red}[2 7 8 9] \\
	\color{green}[3 4 6 7] & \color{green}[3 4 7 8] & \color{red}[3 4 8 9] & \color{red}[2 3 8 9] \\
	\color{green}[3 4 5 7] & \color{green}[3 4 5 8] & \color{red}[3 4 5 9] & \color{red}[2 3 4 9]
	\arrow[from=1-1, to=1-2]
	\arrow[from=1-2, to=1-3]
	\arrow[from=1-2, to=2-1]
	\arrow[from=1-3, to=2-2]
	\arrow[from=1-3, to=2-4]
	\arrow[from=1-4, to=1-3]
	\arrow[from=2-1, to=1-1]
	\arrow[from=2-1, to=2-2]
	\arrow[from=2-2, to=1-2]
	\arrow[from=2-2, to=2-3]
	\arrow[from=2-2, to=3-1]
	\arrow[from=2-3, to=1-3]
	\arrow[from=2-3, to=3-2]
	\arrow[from=2-3, to=3-4]
	\arrow[from=2-4, to=1-4]
	\arrow[from=2-4, to=2-3]
	\arrow[from=3-1, to=2-1]
	\arrow[from=3-1, to=3-2]
	\arrow[from=3-2, to=2-2]
	\arrow[from=3-2, to=3-3]
	\arrow[from=3-3, to=2-3]
	\arrow[from=3-4, to=2-4]
	\arrow[from=3-4, to=3-3]
\end{tikzcd}\]
We repeat the same process two more times. The final quiver looks as follows,
which confirms that we have a maximal green sequence (which can also be
checked using the mutation applet  \cite{QuiverMutation06}). The permutation
$\sigma$ (cf.~section~\ref{ss:combinatorial-construction}) associated with this reddening sequence is
the reflection at the vertical central axis. Notice that $\sigma$ does define
a quiver isomorphism between (\ref{eq:final-seed}) and (\ref{eq:initial-seed}).
 \begin{equation} \label{eq:final-seed}
 \begin{tikzcd}
	\color{red}[5789] & \color{red}[4789] & \color{red}[3 7 8 9] & \color{red}[2789] \\
	\color{red}[5689] & \color{red}[4589] & \color{red}[3489] & \color{red}[2389] \\
	\color{red}[5679] & \color{red}[4569] & \color{red}[3459] & \color{red}[2349]
	\arrow[from=1-1, to=2-2]
	\arrow[from=1-2, to=1-1]
	\arrow[from=1-2, to=2-3]
	\arrow[from=1-3, to=1-2]
	\arrow[from=1-3, to=2-4]
	\arrow[from=1-4, to=1-3]
	\arrow[from=2-1, to=1-1]
	\arrow[from=2-1, to=3-2]
	\arrow[from=2-2, to=1-2]
	\arrow[from=2-2, to=2-1]
	\arrow[from=2-2, to=3-3]
	\arrow[from=2-3, to=1-3]
	\arrow[from=2-3, to=2-2]
	\arrow[from=2-3, to=3-4]
	\arrow[from=2-4, to=1-4]
	\arrow[from=2-4, to=2-3]
	\arrow[from=3-1, to=2-1]
	\arrow[from=3-2, to=2-2]
	\arrow[from=3-2, to=3-1]
	\arrow[from=3-3, to=2-3]
	\arrow[from=3-3, to=3-2]
	\arrow[from=3-4, to=2-4]
	\arrow[from=3-4, to=3-3]
\end{tikzcd}
\end{equation}
To ease the computation of the $F$-polynomials, let us relabel the vertices of the initial quiver from 
$1$ to $12$ with the vertex in row $i$ and column $j$ assigned the label $4(i-1)+j$.
\[\begin{tikzcd} 
	\color{green} 1 & \color{green} 2 & \color{green} 3 & \color{green} 4 \\
	\color{green} 5 & \color{green} 6 & \color{green} 7 & \color{green} 8 \\
	\color{green} 9 & \color{green} 10 & \color{green} 11 & \color{green} 12
	\arrow[from=1-1, to=1-2]
	\arrow[from=1-2, to=1-3]
	\arrow[from=1-2, to=2-1]
	\arrow[from=1-3, to=1-4]
	\arrow[from=1-3, to=2-2]
	\arrow[from=1-4, to=2-3]
	\arrow[from=2-1, to=1-1]
	\arrow[from=2-1, to=2-2]
	\arrow[from=2-2, to=1-2]
	\arrow[from=2-2, to=2-3]
	\arrow[from=2-2, to=3-1]
	\arrow[from=2-3, to=1-3]
	\arrow[from=2-3, to=2-4]
	\arrow[from=2-3, to=3-2]
	\arrow[from=2-4, to=1-4]
	\arrow[from=2-4, to=3-3]
	\arrow[from=3-1, to=2-1]
	\arrow[from=3-1, to=3-2]
	\arrow[from=3-2, to=2-2]
	\arrow[from=3-2, to=3-3]
	\arrow[from=3-3, to=2-3]
	\arrow[from=3-3, to=3-4]
	\arrow[from=3-4, to=2-4]
\end{tikzcd}\]
As we see in~(\ref{eq:final-seed}), the Pl\"ucker coordinate $p_{4589}$ belongs to
the final cluster associated with the maximal green sequence. Taking into account
the permutation $\sigma$, we see that its $F$-polynomial equals $DTF_{Q,7}$,
which is given by
\begin{eqnarray*}
 1  &+  y_7 + y_3y_7 +   y_7y_8 + y_7y_{10} +y_3y_7y_8+ y_3y_7y_{10}+y_7y_8y_{10} \\
     &+  y_3y_4y_7y_8 + y_3y_6y_7y_{10} + y_3y_7y_8y_{10} + y_7y_8y_{10}y_{11} \\
     &+ y_3y_4y_7y_8y_{10} + y_3y_6y_7y_8y_{10} +  y_3y_7y_8y_{10}y_{11} \\
     &+ y_3y_4y_6y_7y_8y_{10} + y_3y_6y_7y_8y_{10}y_{11} + y_3y_4y_7y_8y_{10}y_{11} \\
     &+ y_3y_4y_6y_7y_8 y_{10} y_{11} + y_3y_4y_6y_7^2y_8y_{10}y_{11}.
\end{eqnarray*}
By Theorem~\ref{thm:Nagao}, this is also the $F$-polynomial of the indecomposable injective
module $I_7$. A basis for $I_7$ formed by homogeneous vectors is given by equivalence classes 
of paths ending at the vertex $7$ (see \cite[Chapter 3, lemma 2.6]{assem06}), where the
degree of a path is the number of occurrences of diagonal arrows. As described in the proof of Lemma \ref{keylemmadt}, the homogenous submodules of $I_7$ correspond to the predecessor closed subsets of the poset ${\cal{P}}$ whose {\em Hasse diagram} looks as follows: 
\[
\adjustbox{scale=0.8,center}
{\begin{tikzcd}
	&&& \bullet \\
	&&& \bullet \\
	& \bullet && \bullet && \bullet \\
	\bullet && \bullet && \bullet && \bullet \\
	& \bullet && \bullet && \bullet \\
	&&& \bullet \\
	&&& \bullet
	\arrow[no head, from=1-4, to=2-4]
	\arrow[no head, from=2-4, to=3-2]
	\arrow[no head, from=2-4, to=3-4]
	\arrow[no head, from=2-4, to=3-6]
	\arrow[no head, from=3-2, to=4-1]
	\arrow[no head, from=3-2, to=4-3]
	\arrow[no head, from=3-4, to=4-5]
	\arrow[no head, from=3-6, to=4-7]
	\arrow[no head, from=4-1, to=5-2]
	\arrow[no head, from=4-3, to=3-4]
	\arrow[no head, from=4-3, to=5-4]
	\arrow[no head, from=4-5, to=3-6]
	\arrow[no head, from=4-5, to=5-6]
	\arrow[no head, from=5-2, to=4-3]
	\arrow[no head, from=5-4, to=4-5]
	\arrow[no head, from=5-6, to=4-7]
	\arrow[no head, from=6-4, to=5-2]
	\arrow[no head, from=6-4, to=5-4]
	\arrow[no head, from=6-4, to=5-6]
	\arrow[no head, from=7-4, to=6-4]
\end{tikzcd}.
}
\]
The bullet in the $i$th row (read bottom to top) corresponds to homogeneous
submodules of degree $i-1$. For instance, the top row bullet corresponds to the 
whole injective module $I_7$,  the bottom one to the zero submodule and the bullet
on the second row corresponds to the simple socle $S_7$ of $I_7$.
\end{example}     

\bibliographystyle{amsplain}
\bibliography{references}

\end{document}